\documentclass[a4paper,11pt]{amsart}
\usepackage[utf8]{inputenc}
\usepackage[english]{babel}
\usepackage{graphicx}
\usepackage{amsmath,amssymb, amsbsy, amsfonts, amsthm}
\usepackage[text={16cm,21.5cm},centering]{geometry}
\usepackage{mathrsfs}
\usepackage{caption}
\usepackage{subcaption}
\usepackage{latexsym}
\usepackage{tcolorbox}
\usepackage{enumitem}
\usepackage{float}
\usepackage[colorlinks=true, pdfstartview=FitV, linkcolor=blue, citecolor=blue, urlcolor=blue,pagebackref=false]{hyperref}
\usepackage{url}
\usepackage[toc]{appendix}
\usepackage{xcolor}
\usepackage{color,psfrag}
\usepackage{etoc}
\usepackage{minitoc}
\usepackage{comment}

\newtheorem{theorem}{Theorem}[section]
\newtheorem{corollary}{Corollary}[section]
\newtheorem{lemma}{Lemma}[section]
\newtheorem{definition}{Definition}[section]
\newtheorem{proposition}{Proposition}[section]

\newtheorem{remark}{Remark}[section]

\numberwithin{equation}{section}
\numberwithin{figure}{section}

\newcommand{\R}{\mathbb{R}}
\def\U{{\mathcal U}}
\def\P{{\mathcal P}}
\newcommand{\eps}{\varepsilon}

\newcommand{\norm}[1]{\left \lVert #1 \right \rVert}

\makeatletter
\renewcommand*\env@matrix[1][*\c@MaxMatrixCols c]{
	\hskip -\arraycolsep
	\let\@ifnextchar\new@ifnextchar
	\array{#1}}
\makeatother

\def\eq#1{(\ref{#1})}
\def\neweq#1{\begin{equation}\label{#1}}
	\def\endeq{\end{equation}}

\begin{document}
	
	\title[On spiral steady flows for the Couette-Taylor problem]{On spiral steady flows for the Couette-Taylor problem}
	\author{Edoardo Bocchi, Filippo Gazzola and Antonio Hidalgo-Torné}
	\address[Edoardo Bocchi]{Dipartimento di Matematica, Politecnico di Milano, Piazza Leonardo da
		Vinci 32, 20133 Milano, Italy}
	\email{edoardo.bocchi@polimi.it}
    \address[Filippo Gazzola]{Dipartimento di Matematica, Politecnico di Milano, Piazza Leonardo da
		Vinci 32, 20133 Milano, Italy}
        \email{filippo.gazzola@polimi.it}
	\address[Antonio Hidalgo-Torné]{Max Planck Institute for Mathematics in the Sciences, Inselstrasse 22, 04103 Leipzig, Germany}
	\email{antonio.hidalgo@mis.mpg.de}
	\date{}
	\vspace*{-6mm}
	
	\begin{abstract}

We investigate the Couette-Taylor problem for a steady incompressible viscous fluid in a 3D cylindrical annulus, where one of the two cylinders is still, under both Dirichlet and boundary conditions involving the vorticity that naturally appear in the weak formulation.
The outcome of this study is twofold. 
First, we explicitly determine all the solutions with a specific geometric \emph{partial invariance},
which coincide with the so-called spiral Poiseuille or Poiseuille-Couette flows depending on the boundary conditions. Second, for small boundary data, we provide stability of such solutions, that is, no steady finite-energy perturbations are admissible. To achieve this result in presence of vorticity boundary conditions, we find a substantial analytical difference depending on which cylinder is still.
		 \bigskip
   
        \noindent
		{\bf Mathematics Subject Classification:} 35Q30, 35B06, 76D03, 35B53, 34B24.\par\noindent
		{\bf Keywords:} steady Navier-Stokes equations, cylindrical annulus, invariant solutions, stability.
	\end{abstract}
    
	\maketitle
	\section{Introduction}
	 A long-standing question about the steady motion of an incompressible homogeneous viscous fluid in a cylindrical annulus is the description of all the solutions to the so-called Couette-Taylor problem. Given $0<R_1<R_2$, we consider the 3D unbounded domain
	$$
	\begin{array}{c}
		\Omega=\left\{(x,y,z)\in \R^3:\, R_1^2<x^2+y^2<R_2^2,\,\  z\in\R\right\}\, 
	\end{array}
	$$
    with boundary $\partial\Omega=\Gamma_1\cup\Gamma_2$, where
$$
	\begin{array}{c}
		\Gamma_1=\left\{(x,y,z)\in \R^3:\, x^2+y^2=R_1^2,\, z\in\R\right\} ,\quad \Gamma_2=\left\{(x,y,z)\in \R^3:\, x^2+y^2=R_2^2,\, z\in\R\right\}\,  .
	\end{array}
	$$
  The steady motion of the fluid is governed by the incompressible Navier-Stokes equations
	\begin{equation}\label{SNSE}
		-\Delta u+u\cdot\nabla u+\nabla p=0 \, ,\ \quad  \nabla\cdot u=0 \ \ \mbox{ in } \ \ \Omega\, ,
	\end{equation}
	where one of the two cylinders is still while the other rotates, see Figure \ref{dom1}.	
	\begin{figure}[b]\begin{center}
			\includegraphics[scale=0.25]{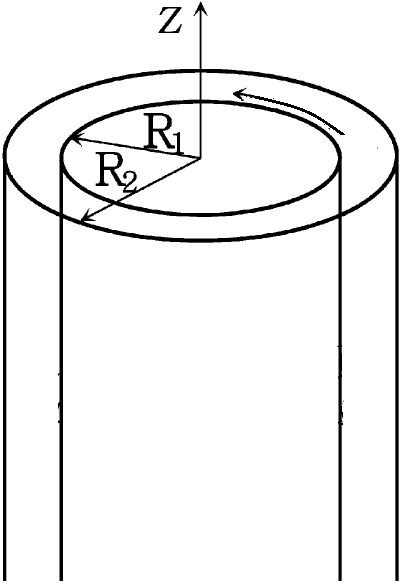}\qquad\qquad\includegraphics[scale=0.25]{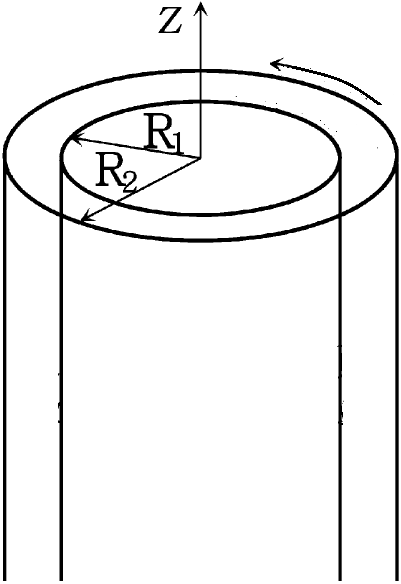}
		\end{center}
		\caption{The cylindrical annulus $\Omega$, with one rotating cylinder.}\label{dom1}
	\end{figure}
	In \eqref{SNSE}, $u:\Omega\to\mathbb{R}^3$ is the velocity vector field, $p:\Omega\to\mathbb{R}$ is the scalar pressure, and the kinematic viscosity is set to unit. As there is no source term in \eqref{SNSE}, the fluid is driven by the movement of one cylinder, which translates into some boundary conditions, see Section \ref{sec-functional}.\par
    {\bf Historical survey.} This is one of the oldest problems in the theory of Navier-Stokes equations.
    So, in order to explain our contribution, let us first briefly retrace some historical facts.
	Since 1890, thanks to Couette \cite{couette} it is known that, for slow rotations of the cylinders, the flow is rotationally-invariant and purely azimuthal.
	In 1917, Rayleigh \cite{rayleigh1917dynamics} studied the dynamics of revolving fluids in annular cylinders.
		In 1921, Taylor \cite{taylor1921experiments} constructed an apparatus where the two cylinders could rotate independently.
		Two years later \cite{taylor}, he investigated the stability of the Couette flow and claimed that the no-slip
		condition was the correct boundary condition for viscous flows at a solid boundary.
		He also experimentally observed that when the rotational velocity of the moving cylinder is increased above a certain threshold,
		the Couette flow becomes unstable and a secondary steady state characterized by axially symmetric toroidal vortices emerges. Upon increasing
		further the rotational velocity of the cylinder the system exhibits more instabilities that lead to states with greater spatio-temporal
		complexity, generating turbulence. The overall findings are nowadays called the Couette-Taylor problem \cite{Chossat}.\par
    This problem attracted a great interest of
    engineers in subsequent years, see e.g.\
\cite{alekseenko1999helical,dou2008instability,luccanegro,ludwieg}. In 1907, Orr \cite[p.75]{orr1907stability} wrote that {\it it would seem improbable that any sharp criterion for stability of fluid motion
		will ever be arrived at mathematically}. And, indeed, it was only more than 40 years after the Taylor experiment that the bifurcation problem
	was tackled theoretically.
	In the seminal paper by Velte \cite{velte1966}, combined with (independently obtained) abstract results by Zeidler \cite{zeidler}
	and Rabinowitz \cite{rabinowitz1971,rabinowitz1972}, it was demonstrated that, for large data there exists at least another solution, still axially symmetric but also vertically-periodic;
	see \cite[Theorem 8A]{zeidlerI}-\cite[Theorem 72C]{zeidlerIV} and \cite[Section II.4]{temam2001navier} for slightly different expositions.\par
The multiplicity of solutions goes far beyond the Taylor 
experiment. Maz'ya \cite[Problem 67]{maz2018seventy} points out a classical issue about \eqref{SNSE} in general domains: to show that uniqueness of
the solution fails for large data. Besides the already mentioned work by Velte \cite{velte1966},
multiplicity results were obtained by Yudovich \cite{yudovich1966secondary,yudovich1967example}, see also a generalization to
forced equations in bodies of revolution around an axis \cite[Theorem IX.2.2]{galdi2011introduction}. For {\em planar flows}, multiplicity results are obtained in a strip \cite{golovkin} and in a square under
{\em Navier boundary conditions} \cite{kochuniqueness}.\par
Multiplicity results trigger the analysis of fluid instabilities and turbulence.
In their monograph fully dedicated to the Couette-Taylor problem, Chossat-Ioos \cite[p.2-3]{Chossat} noticed that {\em observers were astonished
to see the rich variety of patterns that occur, for instance, when the rotation rate of the inner cylinder is increased. ... This model problem then appears as an example of a system that progressively approaches turbulence, which is still one of the biggest challenges for scientists and engineers in fluid mechanics}.
The flow visualization and spectral studies of the Couette-Taylor problem performed in \cite{Andereck} reveals a 
surprisingly large variety of
different states, see also \cite[Fig.I.2,\, p.5]{Chossat} and Figure \ref{chossat}.
\begin{figure}[ht]
\begin{center}
\includegraphics[width=10.5cm]{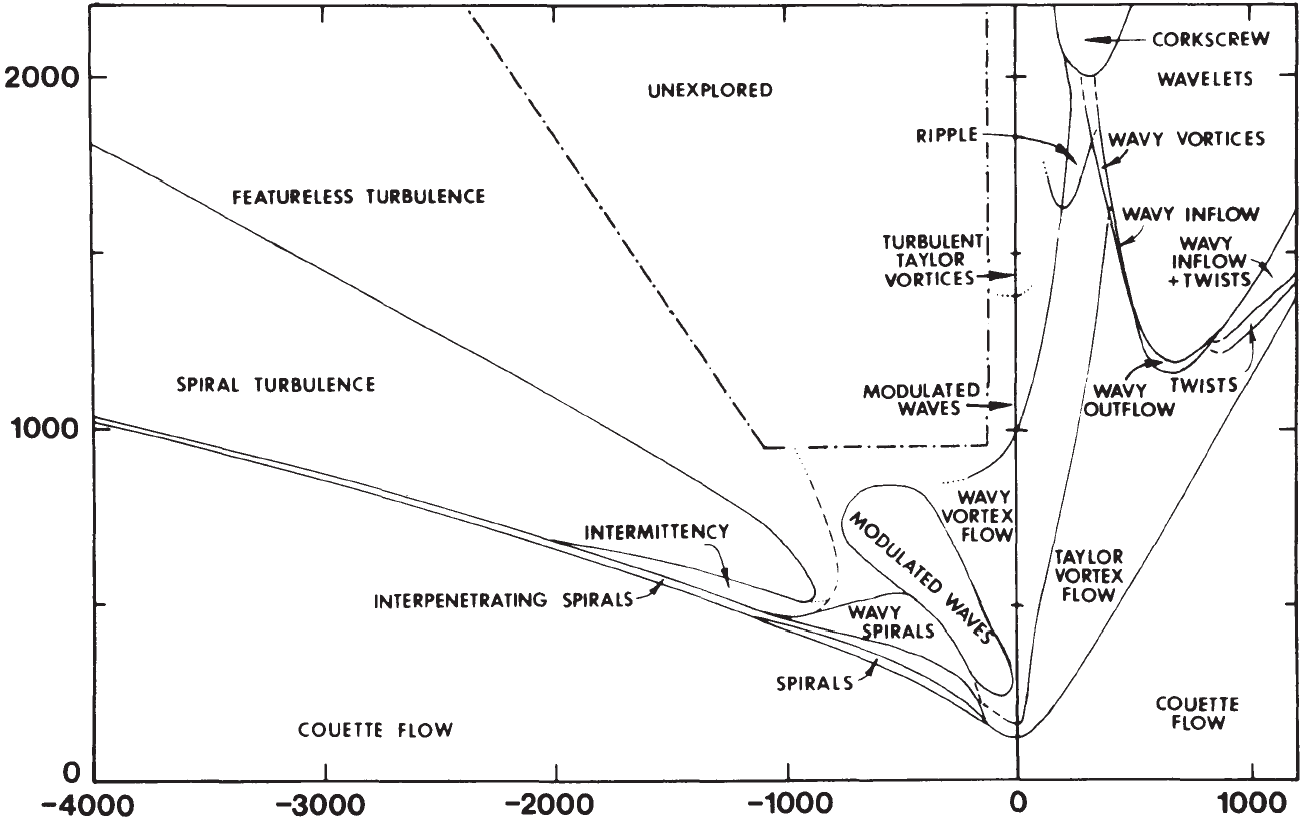}
\caption{Classification of flow states, taken from \cite[Fig.I.2,\, p.5]{Chossat}.}\label{chossat}
\end{center}
\end{figure}
Depending on the angular velocities of the cylinders some ``unexplored regions'' appear. Besides, several of the ``explored regions'' are yet undetectable with theoretical tools. This is related to the following profound consideration \cite{Lokenath} by Hinshelwood (Nobel prize):
{\em fluid dynamicists were divided into hydraulic engineers who observe what cannot be explained and mathematicians who explain things that cannot be observed}.
This is well-illustrated by the story of the Couette-Taylor problem in which many different solutions were physically/numerically
found but only few solutions were mathematically explained. 
A purpose of the present paper is to bring these two 
aspects closer to each other.\par
 {\bf Contributions of the paper.}
Regardless of the magnitude of the data, we explicitly determine all the solutions to \eq{SNSE} with a specific geometric partial invariance, thereby not
entering in the ``unexplored regions'' nor in possible bifurcation patterns. Rayleigh \cite{rayleigh1917dynamics} attempted to classify solutions to \eqref{SNSE} in terms of the dependence of the four
	unknowns in \eqref{SNSE} with respect to the three 
    space variables. We give a rigorous classification in 
    Section \ref{sec-functional}, in line with \cite{Andereck,Chossat,Joseph}.\par
    In Section \ref{Dirichlet} we study \eqref{SNSE} under 
    Dirichlet boundary conditions, by imposing that one cylinder rotates and the other is still.	
For this problem, explicit solutions are known, see \eq{tc1}-\eq{tc2} below, the 
so-called \textit{Couette-Taylor flows} \cite[Chapter II]{landau}. As already mentioned, there may exist other 
solutions depending on the magnitude of the data or, equivalently, on the Reynolds number.
Our first goal is to build a class in which all possible solutions are explicitly determined, regardless of the magnitude of the data.   
This is motivated by an observation by Taylor \cite[p.293]{taylor} who
	wrote: {\it the complexity of the mathematical problem 
    arises from the fact that it is necessary to obtain a 
    three-dimensional solution of
	the equations of motion in which all three 
    components of velocity vanish at both the 
    cylindrical boundaries}. Such solutions do exist and
    are known, the so-called {\em spiral Poiseuille flows} \cite{Joseph,Titi}, usually found by 
    imposing a constant pressure gradient and $z$-periodicity of the velocity.
    These flows are the superposition of the Couette-Taylor flows with
    a vertical Poiseuille flow produced by the pressure gradient along the 
    $z$-axis. Instead, here we find them without these constraints but after introducing suitable {\em partially-invariant solutions}, see Definition
        \ref{kindsolutions}. In Theorems \ref{theo-tcg} 
and \ref{theo-tcg2} we prove that the spiral Poiseuille flows are the only solutions to \eqref{SNSE} in the (fairly 
wide) class of partially-invariant solutions, regardless
of the magnitude of the data. This explicit derivation yields a Liouville-type result for \eqref{SNSE} inside such class of solutions. Within axially symmetric solutions, such result is known without swirl \cite{KochSereginSverak, korobkovpileckas, korobkov-addendum} in the whole $\mathbb{R}^3$ and, under magnitude constraints on the data, in a cylindrical annulus with further restrictions: in the subclass of
vertically-periodic solutions the only admissible is the Couette-Taylor flow (see, e.g.,
\cite[Proposition II.4.2]{temam2001navier}), whereas 
for bounded velocity fields they are the spiral Poiseuille flows, see \cite{kozono2023liouville}. We emphasize that our results 
are obtained without these restrictions and in the class of partially-invariant solutions. By restricting to sufficiently small data,
in Theorems \ref{uniqueness} and \ref{uniqueness2} we prove 
stability results, namely, that {\em no steady finite-energy 
perturbations} of the (infinite-energy) 
partially-invariant solutions are admissible. In Proposition \ref{R1R2vary} we analyze how the smallness condition in Theorem \ref{uniqueness} is affected by the limiting behavior of the radii $R_1$ and $R_2$. The proofs are obtained
through a combination of techniques typical of PDEs and methods from ODEs. Indeed, 
as a byproduct, we obtain a result for a 
nonstandard Sturm-Liouville eigenvalue problem, see
Corollary \ref{coroll}.
\par
The possible failure of uniqueness is the first step towards {\em turbulence}, which is well-measured by the {\em vorticity} of the fluid.  
	In 1827, Navier \cite{navier} proposed boundary conditions with friction, but it was only much later
    that a first contribution appeared in \cite{solonnikov}, followed by many authors whose list is so lengthy that we omit here. It turns out that these slip boundary conditions may be translated in terms of the vorticity, see Section \ref{boundaryvortex}.
    Starting from the work by Pironneau \cite{pironneau}, these conditions, which naturally appear in the weak formulation of \eqref{SNSE}, were rigorously studied in a sound
	functional analytical framework \cite{begue1987nouveau,concafrench,concaenglish}, with applications to fluid flows in a (simply-connected) network of pipes. Motivated by all these facts, in Section 
\ref{boundaryvortex} we study \eqref{SNSE} under boundary conditions involving the vorticity associated with the spiral Poiseuille flows. These boundary conditions also enable us to highlight a substantial analytical difference for the stability depending on which cylinder is still, thus giving theoretical evidence to a phenomenon experimentally observed by Taylor \cite[pp.291-292]{taylor} and clearly depicted in Figure \ref{chossat}.\par
    The so-called \emph{spiral Poiseuille-Couette flows} \cite{Joseph,Titi} are explicit solutions to \eqref{SNSE} found after imposing periodicity of the velocity and a translational motion of the cylinders (sliding). On the contrary, here we find them imposing partial invariance and boundary conditions involving the vorticity. 
    When one cylinder is still, they are explicitly given by \eqref{tcg-vorti} and \eqref{tcg-vorti-outer} below, in which an additional term with respect to the spiral Poiseuille flows appears, showcasing more freedom than with Dirichlet boundary conditions.\par
  In Theorems \ref{theo-tcg-vorti} 
  and \ref{theo-tcg-vorti-outer} we explicitly find all partially-invariant 
  solutions to \eqref{SNSE} under the vorticity boundary conditions on the moving cylinder. 
In order to prove stability for small data, several Poincaré-type inequalities involving the curl of the velocity need to be proved, see 
Lemmas \ref{lem:poincarecurlouter}, \ref{lem:poincarecurlinner}, \ref{lem:poincarecurlperiodic}.
These inequalities are by no means foregone, see Remark
\ref{contro}. With the first curl-Poincaré inequality,
in Theorem \ref{theo:unrestuniquevortouter} we
prove stability of solutions for small data
when the inner cylinder is still. When the outer cylinder is still, the situation is even more involved,
due to the insufficient information provided by the
vorticity on the inner cylinder (concave part of the 
boundary). We manage to prove stability for small data only
under an additional assumption: either for sufficiently ``thin"
cylinders in Theorem \ref{theo-uni-inner} or by considering (infinite-energy) vertically-periodic perturbations in Theorem
\ref{theo:unrestuniquevorinner}, in particular in the 
smaller classes of helical or axially symmetric solutions.

	\section{Boundary conditions and classification of solutions}\label{sec-functional}

	The geometry of $\Omega$ suggests to switch from cartesian coordinates $(x,y,z)\in\R^3$ to cylindrical coordinates
	$(\rho,\theta,z)\in [0,\infty) \times [0,2\pi)\times\R$ and to introduce the orthonormal (planar) basis $\lbrace e_\rho, e_\theta \rbrace \subset \mathbb{R}^{2}$, where
	$$
	e_\rho=(\cos(\theta),\sin(\theta)) \quad \text{and} \quad e_\theta=(-\sin(\theta),\cos(\theta)) \qquad \forall \theta \in [0,2\pi)\, .
	$$
	In the sequel, with an abuse of notation, we denote by $e_\rho$ and $e_\theta$ also the 3D vectors $(e_\rho,0)$ and $(e_\theta,0)$. Then,
	$u(\rho,\theta,z)=u_\rho(\rho,\theta,z)e_\rho+u_\theta(\rho,\theta,z)e_\theta+u_z(\rho,\theta,z)e_z\,
	$
	and \eqref{SNSE} become
	\begin{equation}\label{SNSEcyl}
		\left\{\begin{array}{l}
			\frac{1}{\rho}\frac{\partial u_\rho}{\partial\rho}+\frac{\partial^2 u_\rho}{\partial\rho^2}+\frac{1}{\rho^2}\frac{\partial^2 u_\rho}{\partial\theta^2}+\frac{\partial^2 u_\rho}{\partial z^2}
			-u_\rho\frac{\partial u_\rho}{\partial\rho}-\frac{u_\theta}{\rho}\frac{\partial u_\rho}{\partial\theta}-u_z\frac{\partial u_\rho}{\partial z}
			+\frac{u_\theta^2}{\rho}-\frac{u_\rho}{\rho^2}-\frac{2}{\rho^2}\frac{\partial u_\theta}{\partial\theta}=\frac{\partial p}{\partial\rho}\\[5pt]
			\frac{1}{\rho}\frac{\partial u_\theta}{\partial\rho}+\frac{\partial^2 u_\theta}{\partial\rho^2}
			+\frac{1}{\rho^2}\frac{\partial^2 u_\theta}{\partial\theta^2}+\frac{\partial^2 u_\theta}{\partial z^2}-u_\rho\frac{\partial u_\theta}{\partial\rho}-\frac{u_\theta}{\rho}\frac{\partial u_\theta}{\partial\theta}
			-u_z\frac{\partial u_\theta}{\partial z}-\frac{u_\rho u_\theta}{\rho}-\frac{u_\theta}{\rho^2}+\frac{2}{\rho^2}\frac{\partial u_\rho}{\partial\theta}
			=\frac{1}{\rho}\frac{\partial p}{\partial\theta}\\[5pt]
			\frac{1}{\rho}\frac{\partial u_z}{\partial\rho}+\frac{\partial^2 u_z}{\partial\rho^2}+\frac{1}{\rho^2}\frac{\partial^2 u_z}{\partial\theta^2}+\frac{\partial^2 u_z}{\partial z^2}
			-u_\rho\frac{\partial u_z}{\partial\rho}-\frac{u_\theta}{\rho}\frac{\partial u_z}{\partial\theta}-u_z\frac{\partial u_z}{\partial z}=\frac{\partial p}{\partial z}\\[5pt]
			\frac{\partial u_\rho}{\partial\rho}+\frac{u_\rho}{\rho}+\frac{1}{\rho}\frac{\partial u_\theta}{\partial\theta}
			+\frac{\partial u_z}{\partial z}=0
		\end{array}\right.
	\end{equation}
	for all $(\rho,\theta,z)\in(R_1,R_2)\times[0,2\pi)\times\R$.
	Associated with \eqref{SNSEcyl}, we consider the two different Dirichlet boundary-value problems
	\begin{equation}\label{nsstokes0}
		\begin{aligned}
			& u=\alpha \, e_\theta\quad \text{on}\quad \Gamma_1\, ,\qquad & u =0\quad \text{on}\quad \Gamma_2\, ,\\
			& u=0\quad \text{on}\quad \Gamma_1\, ,\qquad & u =\alpha \, e_\theta\quad \text{on}\quad \Gamma_2\, .
		\end{aligned}
	\end{equation}
The boundary conditions in \eqref{nsstokes0}$_1$ (resp.\ \eqref{nsstokes0}$_2$) show that $\Gamma_1$ (resp.\ $\Gamma_2$) rotates with
constant angular velocity $\alpha/R_1$ (resp.\ $\alpha/R_2$), whereas $\Gamma_2$ (resp.\ $\Gamma_1$) remains still: therefore, the fluid motion
is generated only by the rotation of one cylinder.
The (necessary) compatibility condition $\int_{\Omega}\nabla\cdot u =\int_{\partial\Omega}u \cdot\nu=0$ is satisfied for
solutions to \eq{SNSEcyl}-\eq{nsstokes0}: here, $\nu$ is the outward unit normal to $\Omega$ ($\nu=-e_\rho$ when $\rho=R_1$, $\nu=e_\rho$ when $\rho=R_2$).\par
An explicit solution to \eq{SNSEcyl}-\eq{nsstokes0}$_1$ exists for every $\alpha\in \mathbb{R}$, and is given by $(\alpha{\U^C}e_\theta,\alpha^2{\P^C})$ with
	\begin{equation} \label{tc1}
		{\U^C}(\rho) =\frac{R_1}{R_2^2 -R_1^2} \left(\frac{R_2^2}{\rho}-\rho\right), \
		\P^C(\rho) =\frac{1}{2}\left[\frac{R_1}{R_2^2-R_1^2}\right]^{2} \left( \rho^{2} - \frac{R_2^4}{\rho^{2}}
		- 4R_2^2 \log\rho \right)\, 
	\end{equation}
	for $\rho\in [R_1,R_2]$. Also an explicit solution to \eq{SNSEcyl}-\eqref{nsstokes0}$_2$ exists for every $\alpha\in \mathbb{R}$, and is given by
	$(\alpha{\widetilde\U^C}e_\theta,\alpha^2{\widetilde\P^C})$ with
	\begin{equation} \label{tc2}
		{\widetilde\U^C}(\rho) =\frac{R_2}{R_2^2 -R_1^2} \left(\rho-\frac{R_1^2}{\rho}\right), \
		\widetilde\P^C(\rho) =\frac{1}{2}\left[\frac{R_2}{R_2^2-R_1^2}\right]^{2} \left(\rho^2-\frac{R_1^4}{\rho^{2}}-4R_1^2\log\rho\right) 
	\end{equation}
    for $\rho\in [R_1,R_2]$. We study \eq{SNSEcyl} with boundary conditions involving the vorticity on the moving cylinder derived from \eq{tc1} and \eq{tc2} in Section \ref{boundaryvortex}.\par
	
	Since $\partial \Omega$ and all the boundary data in \eq{nsstokes0} are of class $C^{\infty}$, well-known regularity results
	\cite[Theorem IX.5.2]{galdi2011introduction} imply that any solution to \eqref{SNSEcyl} is smooth and classical, $u\in C^\infty(\overline{\Omega})$,
	and that there exists an associated pressure $p \in C^\infty(\overline{\Omega})$ such that the pair $(u,p)$ solves \eqref{SNSEcyl}
	pointwise. These functions are also $2\pi$-periodic with respect to $\theta$.\par
    By exploiting the axial symmetry of $\Omega$, we now classify the solutions to \eqref{SNSEcyl}.
	In the cartesian coordinates system, consider the rotation matrix about the $z$-axis by an angle $\phi\in[0,2\pi)$
	\begin{equation} \label{rotmat}
		\mathcal{R}_\phi :=
		\begin{bmatrix}
			\cos(\phi) & -\sin(\phi)  & 0\\[0.2cm]
			\sin(\phi) & \cos(\phi)   & 0\\[0.2cm]
			0          &    0         & 1
		\end{bmatrix},
	\end{equation}
	and its combination with vertical translations
	$$
	\begin{bmatrix}
		x\\
		y\\
		z
	\end{bmatrix}
	\ \mapsto \mathcal{S}_{\phi,\gamma}\begin{bmatrix}
		x\\
		y\\
		z
	\end{bmatrix}
	:=\mathcal{R}_\phi\begin{bmatrix}
		x\\
		y\\
		z
	\end{bmatrix}+\gamma\begin{bmatrix}
		0\\
		0\\
		\phi
	\end{bmatrix}
	\qquad\forall\phi\in[0,2\pi)\, ,\ \gamma \in\R\, .
	$$
	Then, we classify solutions according to
	
	\begin{definition}\label{kindsolutions}
		A classical solution $(u,p)$ to \eqref{SNSEcyl} is called:\par
		$\bullet$ partially-invariant if $u_\theta$ and $p$ do not depend on $\theta$, $u_z$ does not depend on $z$;\par
		$\bullet$ $z$-periodic if $u(\rho,\theta,z+T)=u(\rho,\theta,z)$ and $p(\rho,\theta,z+T)=p(\rho,\theta,z)$
		in $\Omega$ for some $T>0$;\par
		$\bullet$ helical if there exists $\gamma\neq0$ such that $\mathcal{R}_\phi u(\xi)=u(\mathcal{S}_{\phi,\gamma}\xi)$ for all $\xi\in\R^3$ and $\phi\in[0,2\pi)$;\par
		$\bullet$ axially symmetric if $u$ and $p$ do not depend on $\theta$.
	\end{definition}
	
	Roughly speaking, partially-invariant solutions are solutions for which $u_\theta$ and $p$ (resp.\ $u_z$) does not depend on
	$\theta$ (resp.\ $z$), while there is no such request on the radial velocity component $u_\rho$, namely,
	\begin{equation}\label{assumption}
		u_\rho=u_\rho(\rho,\theta,z)\, ,\quad u_\theta=u_\theta(\rho,z)\, ,\quad u_z=u_z(\rho,\theta)\, ,\quad p=p(\rho,z).
	\end{equation}
	Simple examples of solutions to \eqref{SNSEcyl} belonging to {\em all} classes in Definition \ref{kindsolutions} are
	$(\alpha\U^C e_\theta,\alpha^2\P^C)$ and $(\alpha{\widetilde\U}^Ce_\theta,\alpha^2{\widetilde\P^C})$, see \eqref{tc1}-\eqref{tc2}.
	
	Periodic axially symmetric solutions were considered in \cite{velte1966}, which proves bifurcations results.
	Helical invariant solutions were found in {\it bounded domains} with helical symmetry in \cite{korobkov2022existence}.

	\section{Dirichlet boundary conditions}\label{Dirichlet}

	\subsection{Explicit derivation of partially-invariant solutions with still outer cylinder.}\label{sec-parinv}

    Under the sole Dirichlet conditions \eqref{nsstokes0}, the boundary value problem for equations \eq{SNSEcyl} is {\em underdetermined} because
	there are no restrictions in the $z$-direction. For this reason, one expects the existence of multiple solutions, in particular
	infinite-energy solutions. To this end, we consider the circular Poiseuille function
    \begin{equation}\label{Ualphabeta}
        \U^P(\rho):=
        \frac{1}{4} \left(\rho^2-R_1^2 - \frac{R_2^2-R_1^2}{\log (R_2/R_1)}\log(\rho/R_1) \right)\qquad \text{for} \quad \rho\in [R_1,R_2].
    \end{equation}

	In our first result we determine all the partially-invariant solutions to \eq{SNSEcyl}-\eqref{nsstokes0}$_1$. We emphasize that, differently from \cite{kozono2023liouville}, such solutions have only $u_\theta$ and $p$ axially symmetric and we do not require boundedness of $u$.
	
	\begin{theorem}\label{theo-tcg}
		Let $({\U^C},{\P^C})$ be as in \eqref{tc1}, let $\U^P$ be as in \eqref{Ualphabeta}. Then, the spiral Poiseuille flows 
		\begin{equation} \label{tcg}\begin{aligned}
				&\U_{\alpha,\beta}(\rho) := \alpha\U^C(\rho)e_\theta+ \beta\U^P(\rho)e_z,\\[5pt]
				& \P_{\alpha,\beta}(\rho, z) := \alpha^2\P^C(\rho)+ \beta z,\end{aligned}  \qquad \text{for} \quad (\rho,z) \in[R_1,R_2]\times \mathbb{R}\,
		\end{equation}
		are the only partially-invariant solutions  to \eqref{SNSEcyl}-\eqref{nsstokes0}$_1$ as $ \beta$ varies in $\mathbb{R}$.
		\end{theorem}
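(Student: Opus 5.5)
Under assumption \eqref{assumption} I would first pin down $u_\rho$ and then reduce the system \eqref{SNSEcyl} to three decoupled linear ODEs in $\rho$.

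\emph{Step 1: $u_\rho\equiv0$.} Since $u_\theta=u_\theta(\rho,z)$ and $u_z=u_z(\rho,\theta)$, the incompressibility condition reads $\frac{1}{\rho}\partial_\rho(\rho u_\rho)=0$. Hence $\rho u_\rho=f(\theta,z)$ is independent of $\rho$. The Dirichlet condition $u_\rho=0$ on $\Gamma_1$ (or $\Gamma_2$) forces $f\equiv0$, so $u_\rho\equiv0$ in $\Omega$.

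\emph{Step 2: the radial and vertical momentum equations.} With $u_\rho=0$, the first equation of \eqref{SNSEcyl} becomes $\partial_\rho p=u_\theta^2/\rho$. The third equation becomes
$$\frac{1}{\rho}\partial_\rho(\rho\,\partial_\rho u_z)+\frac{1}{\rho^2}\partial^2_\theta u_z-\frac{u_\theta}{\rho}\partial_\theta u_z=\partial_z p(\rho,z).$$
The left-hand side depends on $(\rho,\theta,z)$ only through $u_\theta(\rho,z)$ multiplying $\partial_\theta u_z(\rho,\theta)$. I would apply $\partial_\theta$ twice, or alternatively integrate in $\theta$ over $[0,2\pi)$, using periodicity so that $\int\partial_\theta u_z=\int\partial^2_\theta u_z=0$. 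Averaging gives $\frac{1}{\rho}(\rho\bar u_z')'=\partial_z p$, where $\bar u_z(\rho)$ is the $\theta$-mean. The left side is then independent of $z$, so $\partial_z p=g(\rho)$. Next, $\partial_\rho p=u_\theta^2/\rho$ gives $\partial_z\partial_\rho p=2u_\theta\partial_z u_\theta/\rho$, while $\partial_\rho\partial_z p=g'(\rho)$ is $z$-independent. So $\partial_z(u_\theta^2)=\frac{\rho}{2}g'(\rho)$ is $z$-independent.

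\emph{Step 3: the azimuthal equation.} With $u_\rho=0$ and $u_\theta$ independent of $\theta$, the second equation reads $\Delta_{\rho,z}u_\theta-u_\theta/\rho^2-u_z\partial_z u_\theta=0$, because $p$ is $\theta$-independent. Here $u_z(\rho,\theta)$ multiplies $\partial_z u_\theta(\rho,z)$, and the other terms are $\theta$-independent. If $\partial_z u_\theta\neq0$ at some point, then $u_z$ is $\theta$-independent at that $\rho$.

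The main obstacle is the circularity between Steps 2 and 3: one must show $\partial_z u_\theta\equiv0$ and $\partial_\theta u_z\equiv0$ with no boundedness assumption. I would argue as follows. From Step 2, $u_\theta^2=a(\rho)z+b(\rho)$, and a square must stay nonnegative for all $z\in\R$, so $a\equiv0$. Hence $u_\theta^2$, and by continuity $u_\theta$ itself, is independent of $z$. This neat use of positivity replaces the boundedness needed in \cite{kozono2023liouville}. Then $g'=0$, so $\partial_z p=\beta$ is a constant.

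\emph{Step 4: the remaining ODEs.} The azimuthal equation reduces to $u_\theta''+u_\theta'/\rho-u_\theta/\rho^2=0$ with $u_\theta(R_1)=\alpha$ and $u_\theta(R_2)=0$. Its general solution is $c_1\rho+c_2/\rho$, and the boundary values give $u_\theta=\alpha\,\U^C$. The equation for $u_z$ is now
$$\frac{1}{\rho}(\rho u_z')'+\frac{1}{\rho^2}\partial^2_\theta u_z-\frac{\alpha\U^C}{\rho}\partial_\theta u_z=\beta.$$
I expand $u_z$ in a Fourier series in $\theta$. For $k\neq0$, each mode solves a homogeneous ODE with complex coefficient $k^2/\rho^2+ik\alpha\U^C/\rho$ and zero Dirichlet data. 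Multiplying by $\rho\bar w_k$ and integrating by parts, the real part gives $\int\rho|w_k'|^2+k^2|w_k|^2/\rho=0$, hence $w_k=0$. The $k=0$ mode solves $\frac{1}{\rho}(\rho w')'=\beta$ with zero boundary values, whose unique solution is $\beta\,\U^P$ as in \eqref{Ualphabeta}. Finally, integrating $\partial_\rho p=\alpha^2(\U^C)^2/\rho$ gives $p=\alpha^2\P^C+\beta z$ up to a constant. Conversely, a direct check shows that \eqref{tcg} solves the problem, which completes the proof.
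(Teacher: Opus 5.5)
Your proof is correct, and its key step takes a different route from the paper's. The paper first extracts from the azimuthal equation \eqref{sinB} the alternative \eqref{either}, then splits into cases. If $u_\theta=u_\theta(\rho)$, it separates the pressure and reads $\partial_z p=\beta$ off the vertical equation. If $u_z=u_z(\rho)$, it uses the nonnegativity of $u_\theta^2$ to fall back to the first case.

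You instead average the vertical momentum equation over $\theta$. Since $u_\theta$ is $\theta$-independent, periodicity annihilates both $\theta$-derivative terms, including the coupling term $u_\theta\,\partial_\theta u_z/\rho$. This gives $\partial_z p=g(\rho)$ with no case distinction, and the positivity argument is then applied once, globally.

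This gains something concrete. Equation \eqref{sinB} only yields the dichotomy separately for each $\rho$: at a given $\rho$, either $\partial_z u_\theta(\rho,\cdot)\equiv0$ or $u_z(\rho,\cdot)$ is constant. The paper then treats \eqref{either} as a global alternative, whereas your averaging removes any need to globalize it.

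For the $u_z$ equation, your mode-by-mode energy identity (real part after testing against $\rho\,\overline{w_k}$) is the same coercivity computation as the paper's Lax--Milgram argument in $V$, where the drift term drops out by antisymmetry. Lax--Milgram additionally gives existence, which is not needed here because the candidate solution is explicit. Your version, in turn, proves Corollary \ref{coroll} directly rather than as a byproduct.

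One harmless slip: $\partial_z(u_\theta^2)=\rho\,g'(\rho)$, not $\tfrac{\rho}{2}g'(\rho)$. Only its $z$-independence is used, so nothing changes.
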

	\begin{proof} Let $u$ be a partially-invariant solution to \eqref{SNSEcyl}. We divide the proof in two steps. \\
		\underline{Step 1:} we show that the following alternative holds:
		\begin{equation}\label{either}
			\mbox{either }u_\theta\mbox{ does not depend on }z\ (u_\theta=u_\theta(\rho))
			\mbox{ or }u_z\mbox{ does not depend on }\theta\ (u_z=u_z(\rho))\, .
		\end{equation}By \eqref{assumption}, the incompressibility condition becomes
		\begin{equation}\label{incom-cylin}
			\frac{\partial}{\partial\rho}\Big(\rho u_\rho(\rho,\theta,z)\Big)=0 \ \Longrightarrow \ u_{\rho}(\rho,\theta,z)
			=\frac{A(\theta,z)}{\rho} \quad  \forall (\rho,\theta,z)\in(R_1,R_2)\times (0, 2\pi)\times \mathbb{R}\, ,
		\end{equation}
		for some smooth $A:\R^2\to\mathbb{R}$, $2\pi$-periodic with respect to $\theta$. The boundary conditions \eq{nsstokes0}$_1$ imply $A(\theta,z)\equiv0$ and $u_{\rho}(\rho, \theta, z)\equiv0$.
		Therefore, the first three equations in \eqref{SNSEcyl} in the class of partially-invariant solutions reduce to
		\begin{eqnarray}
			\qquad \frac{\partial p}{\partial \rho}(\rho, z) &=& \frac{u^{2}_{\theta}(\rho,z)}{\rho}\, ,\label{uno}\\
			0 \quad &=& \rho\frac{\partial^2 u_{\theta}}{\partial\rho^2}(\rho,z) +
			\frac{\partial u_{\theta}}{\partial\rho}(\rho,z)+\rho\frac{\partial^2 u_{\theta}}{\partial z^2}(\rho,z)
			-\rho u_z(\rho,\theta)\, \frac{\partial u_{\theta}}{\partial z}(\rho,z)- \frac{u_\theta(\rho,z)}{\rho}\, ,\label{dos}\\
			\frac{\partial p}{\partial z}(\rho, \theta) &=& \frac{\partial^2 u_z}{\partial\rho^2}(\rho,\theta) +\frac{1}{\rho}\frac{\partial u_z}{\partial\rho}(\rho,\theta)+\frac{1}{\rho^2}\frac{\partial^2 u_z}{\partial\theta^2}(\rho,\theta)-
			\frac{u_\theta(\rho,z)}{\rho}\, \frac{\partial u_z}{\partial\theta}(\rho,\theta)\, .\label{tres}
		\end{eqnarray}
		From \eqref{dos} we obtain that
		\begin{equation}\label{sinB}
			\rho u_z(\rho,\theta)\, \frac{\partial u_{\theta}}{\partial z}(\rho,z)=\rho\frac{\partial^2 u_{\theta}}{\partial\rho^2}(\rho,z) +
			\frac{\partial u_{\theta}}{\partial\rho}(\rho,z)+\rho\frac{\partial^2u_{\theta}}{\partial z^2}(\rho,z)-\frac{u_\theta(\rho,z)}{\rho}
		\end{equation}
		for all $(\rho, \theta, z)\in (R_1,R_2)\times (0, 2\pi)\times \mathbb{R}$. Since the term on the right-hand side of \eqref{sinB} does not depend on $\theta$ neither does the term on the left, which implies that
		$$
		\text{either}\quad \frac{\partial u_{\theta}}{\partial z}(\rho,z)=0,\quad \text{or}\quad u_z(\rho,\theta)=u_z(\rho),
		$$
		which is equivalent to \eqref{either}.\\
		\underline{Step 2:} we prove that, necessarily, $(u,p)=(\U_{\alpha,\beta}, \P_{\alpha,\beta})$ given by \eqref{tcg}
		for some $\beta\in\R$. To this end, we distinguish two cases.\par
		(a) If the first alternative in \eqref{either} holds, then \eqref{sinB} becomes
		$$
		\rho^2\frac{d^2 u_{\theta}}{d\rho^2}(\rho)+\rho\frac{du_{\theta}}{d\rho}(\rho)-u_\theta(\rho)=0\quad\text{in}\quad (R_1,R_2)
		$$
		which, complemented with the boundary conditions \eq{nsstokes0}$_1$, gives
		\begin{equation}\label{utheta}
			u_\theta(\rho)=\frac{\alpha R_1}{R_2^2 -R_1^2} \left(\frac{R_2^2}{\rho}-\rho\right) .
		\end{equation}
		By replacing \eqref{utheta} into \eqref{uno} and integrating with respect to $\rho$, we find a separated structure of the pressure, namely,
		$
		p(\rho, z)=D(\rho)+E(z)
		$
		for some smooth functions $D:[R_1,R_2]\to\R$ and $E:\R\to\R$. Then, \eqref{tres} reads
		\begin{equation}\label{eq-uz}
			\frac{\partial^2 u_z}{\partial\rho^2}(\rho,\theta)+\frac{1}{\rho}\frac{\partial u_z}{\partial\rho}(\rho,\theta)+\frac{1}{\rho^2}\frac{\partial^2 u_z}{\partial\theta^2}(\rho,\theta)-\frac{u_\theta(\rho)}{\rho}\, \frac{\partial u_z}{\partial\theta}(\rho,\theta)=E'(z)\, .
		\end{equation}
		Since the left hand side of \eqref{eq-uz} does not depend on $z$, neither does the right hand side and $E'(z)\equiv\beta$ for some $\beta\in\R$.
		Thus,
		\begin{equation}\label{sepa-pres}
			p(\rho,z)=D(\rho)+\beta z
		\end{equation}and,
		recalling \eqref{utheta}, \eqref{eq-uz} reduces to
		\begin{equation}\label{F}
			\frac{\partial^2 u_z}{\partial\rho^2}(\rho,\theta)+\frac{1}{\rho}\frac{\partial u_z}{\partial\rho}(\rho,\theta)+
			\frac{1}{\rho^2}\frac{\partial^2 u_z}{\partial\theta^2}(\rho,\theta)+
			\frac{\alpha R_1 }{R_2^2 -R_1^2} \left(1-\frac{R_2^2}{\rho^2}\right)\, \frac{\partial u_z}{\partial\theta}(\rho,\theta)=\beta\in\R\, .
		\end{equation}
		For $2\pi$-periodic functions $\theta\mapsto v(\rho, \theta)$, we restrict ourselves to the class
		\begin{equation}\label{V}
			V=\left\{v\in H^1((R_1,R_2)\times [0,2\pi));\ v(R_1,\theta)=v(R_2, \theta)=0  \text{ for $\theta\in[0,2\pi)$}\right\},
		\end{equation}which is a closed subspace of $H^1((R_1,R_2)\times (0,2\pi))$. 	We claim that \eqref{F} admits a unique solution in $V$.
		Indeed, after multiplying \eqref{F} by $-\rho$, its variational formulation reads
		\begin{equation}\label{var-form}
			B(u_z, v)= F(v) \ \text{for any} \ v\in V,
		\end{equation}
		where $F(v)=-\beta\int_{(R_1,R_2)\times (0,2\pi)} \rho vd\rho d\theta$ is a linear functional on $V$ and \begin{equation}\label{B}B(u_z, v)=\int_{(R_1,R_2)\times (0,2\pi)} \left(\rho \frac{\partial u_z}{\partial \rho}\frac{\partial v}{\partial \rho} + \frac{1}{\rho}\frac{\partial u_z}{\partial \theta} \frac{\partial v}{\partial \theta} + w_\alpha \frac{\partial u_z}{\partial \theta}v\right) d\rho d\theta\end{equation} is a bilinear form on $V$ with $w_\alpha(\rho)= \frac{\alpha R_1}{R_2^2-R_1^2}\left( \frac{R_2^2}{\rho} -\rho\right)$. Thanks to the Hölder inequality, we have that $|F(v)|\leq C_1\|v\|_V$ and $|B(u_z, v)|\leq C_2 \|u_z\|_{V}\|v\|_{V}$ for some constants $C_1=C_1(R_1,R_2,\beta)$ and $C_2=C_2(R_1,R_2, \alpha)$. Furthermore, using the periodicity with respect to $\theta$ in \eqref{V}, we obtain that
		\begin{equation*}\begin{aligned}
				B(v, v)&= \int_{(R_1,R_2)\times (0,2\pi)} \left( \rho\left|\frac{\partial v}{\partial \rho}\right|^2 + \frac{1}{\rho}\left|\frac{\partial v}{\partial \theta}\right|^2 +\frac{1 }{2}\frac{\partial (w_\alpha v^2)}{\partial \theta} \right) d\rho d\theta\\
				&= \int_{(R_1,R_2)\times (0,2\pi)} \left( \rho \left|\frac{\partial v}{\partial \rho}\right|^2 + \frac{1}{\rho}\left|\frac{\partial v}{\partial \theta}\right|^2  \right) d\rho d\theta\geq \min (R_1, 1/R_2)\|v\|^2_V\qquad\forall v\in V,
			\end{aligned}
		\end{equation*} that is, $B$ is coercive (or $V$-elliptic). Then, for any $(\alpha, \beta)\in \mathbb{R}^2$, the Lax-Milgram Theorem yields the existence and uniqueness of the solution to \eqref{F} in $V$. We now seek the solution $u_z$ as a function depending only on $\rho$, hence it satisfies
		\begin{equation*}
			\begin{cases}
				\dfrac{d^2 {u_z}}{d\rho^2}(\rho) +\dfrac{1}{\rho}\dfrac{d{u_z}}{d\rho}(\rho) =\beta \quad \text{in} \quad (R_1,R_2),\\[5pt]
				{u}_{z}(R_1)=  {u}_{z}(R_2)=0,
			\end{cases}
		\end{equation*}which can be solved to obtain the explicit expression
		\begin{equation}\label{uz-explicit}
			{u}_{z}(\rho)=  \frac{\beta}{4} \left(\rho^2-R_1^2 - \frac{R_2^2-R_1^2}{\log (R_2/R_1)}   \log (\rho/R_1)\right).
		\end{equation}
		Thus, by uniqueness, we infer that \eqref{uz-explicit} is the unique solution to \eqref{F}, and we remark that it does not depend on the parameter $\alpha$ (rotational velocity of the inner cylinder $\Gamma_1$).	
		Furthermore, expliciting the function $D(\rho)$ in \eqref{sepa-pres} using \eqref{utheta} yields
		\begin{equation*}
			p(\rho,z)=   \frac{1}{2}\left(\frac{\alpha R_1}{R_2^2-R_1^2} \right)^{2} \left( \rho^{2} - \frac{R_2^4}{\rho^{2}}
			- 4R_2^2 \log\rho \right) + \beta z,
		\end{equation*}
		where we have set the additive constant equal to zero.
		
		(b) If the second alternative in \eqref{either} holds, on the one hand \eqref{tres} reduces to
		$$
		\frac{\partial p}{\partial z}(\rho) = \frac{d^2 u_z}{d\rho^2}(\rho)+\frac{1}{\rho}\frac{d u_z}{d\rho}(\rho)\,,
		$$
		which implies that $\tfrac{\partial^2p}{\partial z\partial\rho}$ merely depends on $\rho$. On the other hand, by differentiating \eqref{uno} with respect to $z$, we obtain
		$$
		\frac{\partial u^2_{\theta}}{\partial z}(\rho,z)=\rho\frac{\partial^2p}{\partial z\partial\rho}(\rho)=: C(\rho)
		$$and integrating with respect to $z$ yields
		\begin{equation}
			| u_\theta (\rho,z)|= \sqrt{C(\rho)z + K(\rho)}
		\end{equation}
		with some smooth function $K: [R_1,R_2] \rightarrow \mathbb{R}$. Since $u_\theta$ is smooth and defined for all $z\in\mathbb{R}$, it must
		necessarily be $C(\rho)\equiv 0$ and we find that $u_\theta=u_\theta(\rho)$; this is the first alternative in \eqref{either}
		that has already been discussed in case (a). The theorem is so proved in both cases.\end{proof}
	
			The pressure $\mathcal{P}_{\alpha,\beta}$ in \eqref{tcg} is defined up to an additive
            constant and its gradient is uniquely determined by the velocity. This is why throughout the paper by ``flow" we mean both the velocity $u$ and the velocity-pressure couple $(u,p)$ in \eqref{SNSE}. For physical reasons, one expects the pressure to be decreasing with respect to $z$, 
            hence it is customary to take $\beta\leq 0$. We are so led to analyze with some attention the
            map $\rho\mapsto-\U^P(\rho)$, see \eqref{Ualphabeta}. It vanishes at the 
            endpoints $R_1,R_2$ and is strictly convex in $[R_1,R_2]$; it achieves its maximum at
\begin{equation}\label{R0}
	R_0=\sqrt{\frac{R_2^2-R_1^2}{2\log(R_2/R_1)}}.
\end{equation}
	Note that $R_1<R_0<\frac{R_2+R_1}{2}$, so $R_0$ is on the left of the midpoint of the interval and there is an intermediate cylinder (of radius $R_0$) where 
    the fluid velocity $\U_{0,-1}$ attains its maximum value (in modulus). In Figure \ref{figvortex} we sketch the just described behaviour of the velocity, which also holds for $\U_{0,\beta}$ for any $\beta<0$.\par
	
	\begin{figure}[H]
		\begin{center}
			\includegraphics[scale=0.5]{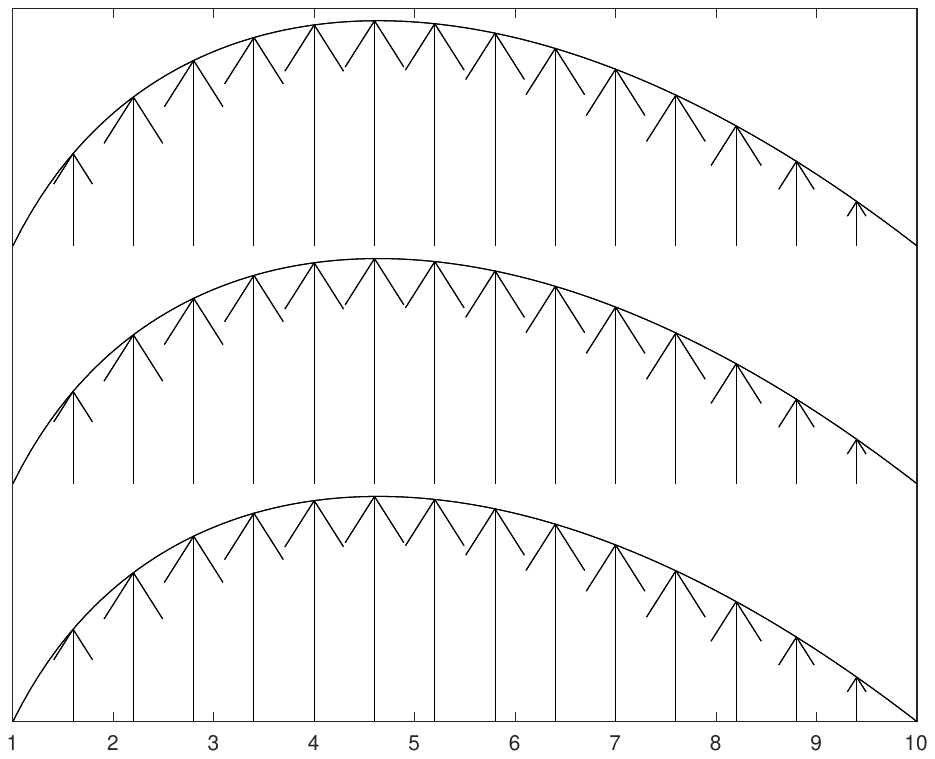}
		\end{center}
		\caption{The Poiseuille flow $\mathcal{U}_{0,-1}$ produced by the pressure gradient along the $z$-axis in the plane $(\rho, z)$ with $R_1=1$ and $R_2=10$ ($R_0\approx 4.64$).}\label{figvortex}
	\end{figure}

Theorem \ref{theo-tcg} also deserves two deeper comments,
one related to its physical consequences and the other related
to the theory of ODEs.\par
First,	we stress that the velocity $\mathcal{U}_{\alpha, \beta}$ of the spiral Poiseuille flows 
 in \eqref{tcg} have infinite 
energy since they do not vanish as $|z|\to\infty$. The 
velocities (but not the pressures!) of
the solutions \eqref{tcg} form a linear vector space $V_{\alpha, \beta}$ although the equations \eqref{SNSEcyl} are nonlinear. The	parameter $\beta$ should
	be seen as the mass density so that $\P_{0,\beta}$ plays the role of the hydrostatic pressure, whereas $\U_{0,\beta}$ has infinite energy. On the other hand, also the circular Couette flow \eq{tc1}
	has infinite energy (with no pressure drop). Moreover, unbounded cylinders do not exist in nature. Hence, the solutions \eqref{tcg} should be seen as (mathematical) limits of finite-energy solutions on
	{\em bounded cylindrical annuli}. The taller are the cylinders, the larger is the pressure drop, increasing linearly with respect to the height of the cylinders.\par
Second, as a byproduct of the proof of Theorem
\ref{theo-tcg}, we obtain a result on the eigenvalues of
the following Sturm-Liouville-type problems
	\begin{equation}\label{SL-pb}
		\begin{cases}
			\dfrac{d}{d\rho}	\left(\rho \dfrac{d W_k}{d\rho}(\rho) \right) - \dfrac{k^2}{\rho} W_k(\rho)= - \lambda\dfrac{k R_1}{R_2^2-R_1^2}\left(\rho-\dfrac{R_2^2}{\rho}\right) W_k(\rho)  \quad \text{in} \quad (R_1,R_2),\\[10pt]
			W_k(R_1)=  W_k(R_2)=0,
		\end{cases}
	\end{equation} where $W_k$ are complex-valued and the weight function multiplying $W_k$ in the right-hand side of \eqref{SL-pb} \emph{vanishes} at the endpoint $\rho=R_2$. This makes  the problem singular and does not enter into the classical (regular) Sturm-Liouville theory. However, we obtain the following statement, which has its own independent interest:
	
	\begin{corollary}\label{coroll}
		Let $R_2>R_1>0$. Then, for any $k\in \mathbb{Z}\setminus \{0\}$, the Sturm-Liouville-type problem \eqref{SL-pb} admits no purely-imaginary eigenvalues $\lambda$.
	\end{corollary}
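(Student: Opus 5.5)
Suppose, by contradiction, that $\lambda=i\mu$ with $\mu\in\R$ is an eigenvalue of \eqref{SL-pb} for some $k\in\mathbb{Z}\setminus\{0\}$, with a nontrivial complex eigenfunction $W_k$. The plan is to turn $W_k$ into a nonzero solution of the homogeneous version ($\beta=0$) of \eqref{F} lying in the class $V$ of \eqref{V}. This contradicts the uniqueness given by the Lax--Milgram argument in the proof of Theorem \ref{theo-tcg}, whose only solution for $\beta=0$ is $u_z\equiv0$.

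\textbf{Step 1 (matching the equations).} Since $\lambda$ is purely imaginary, we may take $\mu\neq0$; the case $\mu=0$ is treated in Step 3. Choose $\alpha:=\mu$ and set $v(\rho,\theta)=W_k(\rho)e^{ik\theta}$. Substituting into the left-hand side of \eqref{F} multiplied by $\rho$ gives
$$
\frac{d}{d\rho}\Big(\rho W_k'\Big)-\frac{k^2}{\rho}W_k+ik\alpha\frac{R_1}{R_2^2-R_1^2}\Big(\rho-\frac{R_2^2}{\rho}\Big)W_k ,
$$
all multiplied by $e^{ik\theta}$. With $\lambda=i\alpha$, this is exactly \eqref{SL-pb} after moving the right-hand side over. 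The sign conventions, i.e.\ whether $\alpha=\mu$ or $\alpha=-\mu$, must be checked carefully at this point. Any mismatch is harmless, because $\alpha\in\R$ is arbitrary in \eqref{F}.

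\textbf{Step 2 (from complex to real solutions).} Since \eqref{F} has real coefficients, both $\mathrm{Re}\,v$ and $\mathrm{Im}\,v$ are real $2\pi$-periodic solutions of \eqref{F} with $\beta=0$. They vanish at $\rho=R_1,R_2$ because $W_k$ does. Moreover $W_k$ is smooth on $[R_1,R_2]$: \eqref{SL-pb} is a regular linear ODE there, since $\rho\ge R_1>0$ and the coefficients are bounded. Hence $\mathrm{Re}\,v,\mathrm{Im}\,v\in V$. At least one of them is nonzero, since $W_k\not\equiv0$ and $e^{ik\theta}$ never vanishes. This contradicts uniqueness in $V$ for \eqref{var-form} with $F=0$.

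Alternatively, one can avoid Lax--Milgram and argue directly. Multiply \eqref{SL-pb} by $\overline{W_k}$ and integrate by parts, the boundary terms vanishing. This gives
$$
-\int_{R_1}^{R_2}\Big(\rho|W_k'|^2+\frac{k^2}{\rho}|W_k|^2\Big)d\rho=-i\mu\,k\frac{R_1}{R_2^2-R_1^2}\int_{R_1}^{R_2}\Big(\rho-\frac{R_2^2}{\rho}\Big)|W_k|^2d\rho .
$$
The left-hand side is real and the right-hand side is purely imaginary, so both vanish. Then $W_k\equiv0$, because the left-hand side is negative definite when $k\neq0$. This is the one-dimensional shadow of the coercivity computation for $B(v,v)$, where the $\theta$-derivative term integrated to zero.

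\textbf{Step 3 (remaining case and main obstacle).} The case $\mu=0$ (i.e.\ $\lambda=0$) is covered by the same identity. The main thing to be careful about is the singular nature of the problem, namely that the weight vanishes at $R_2$. One has to make sure this causes no loss of regularity, so that integration by parts and membership in $V$ are legitimate. This holds because the singularity concerns only the sign-degenerate weight and not the leading coefficient $\rho$, which stays positive. I would present the direct energy identity as the proof and mention the link with Theorem \ref{theo-tcg} as its interpretation.
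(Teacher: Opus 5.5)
Your proof is correct, and it contains two arguments. Steps 1--2 follow the paper's route. The paper expands $u_z$ from \eqref{F} in Fourier series and notes that the coefficients $\widehat u_{z,k}$, $k\neq0$, solve \eqref{SL-pb} with $\lambda=i\alpha$. It then uses Lax--Milgram uniqueness in $V$ together with the radial solution \eqref{uz-explicit} to force these coefficients to vanish. Your construction of $\mathrm{Re}(W_ke^{ik\theta})$ and $\mathrm{Im}(W_ke^{ik\theta})$ as nonzero elements of $V$ solving the homogeneous problem makes explicit a step the paper leaves implicit: why an \emph{arbitrary} eigenfunction, and not only a Fourier coefficient of the particular solution, must be zero.

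The energy identity you propose to present instead is a genuinely different and more elementary route. It is purely one-dimensional, needs neither \eqref{F} nor Lax--Milgram nor \eqref{uz-explicit}, and covers $\lambda=0$ at once. It also proves more than the corollary. For arbitrary complex $\lambda$ the same computation gives
\[
\int_{R_1}^{R_2}\Big(\rho|W_k'|^2+\frac{k^2}{\rho}|W_k|^2\Big)d\rho=\lambda\,\frac{kR_1}{R_2^2-R_1^2}\int_{R_1}^{R_2}\Big(\rho-\frac{R_2^2}{\rho}\Big)|W_k|^2\,d\rho .
\]
Since $\rho-R_2^2/\rho<0$ on $[R_1,R_2)$, both integrals are nonzero when $W_k\not\equiv0$. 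Hence every eigenvalue is real, nonzero, and satisfies $k\lambda<0$. In other words, the weight's vanishing at $R_2$ does not destroy its one-signedness, which is all the argument needs.

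The paper's route, in exchange, ties the corollary to the PDE: it reads the result off as a byproduct of the uniqueness for \eqref{F} established in the proof of Theorem \ref{theo-tcg}. That is exactly the interpretation you already mention.
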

	\begin{proof}
		When $\lambda=0$, the unique solution to \eqref{SL-pb} is $W_k\equiv0$ and, hence $\lambda=0 $ is not an eigenvalue. For $\lambda\neq 0$, since $\theta \mapsto u_z(\rho, \theta)$ in \eqref{F} is $2\pi$-periodic, we expand it in Fourier series and using the boundary conditions \eq{nsstokes0}$_1$,
		we obtain that the Fourier coefficients $\widehat{u}_{z,k}$ with $k\in \mathbb{Z}\setminus\{0\}$ solve \eqref{SL-pb} with $\lambda=i\alpha$ and $\alpha\in\mathbb{R}$. Since the unique solution to \eqref{F} does not depend on $\theta$, see \eqref{uz-explicit}, we have $u_z(\rho)= \widehat{u}_{z, 0}(\rho)$ and necessarily $\widehat{u}_{z, k}\equiv 0$ for any $k\in \mathbb{Z}\setminus \{0\}$. Thus, we conclude that $W_k\equiv0$ for any $k\in \mathbb{Z}\setminus \{0\}$ and so $\lambda=i\alpha$ with $\alpha\in \mathbb{R}$ is not an eigenvalue.
	\end{proof}

\subsection{Stability with still outer cylinder.}\label{sec:unrestricteduniqueness}
	
We investigate here the existence of non-partially-invariant solutions to \eq{SNSEcyl}-\eqref{nsstokes0}$_1$ by perturbing $(\U_{\alpha, \beta}, \P_{\alpha, \beta})$, as defined in \eq{tcg}. We analyze the case when the perturbation has finite energy and we show that, if $\alpha$ and $\beta$ are sufficiently small, then the only allowed perturbation is the trivial one. To this end, we consider the space of divergence-free
vector fields
	$$
	\begin{array}{cc}
		V_0(\Omega):=\left\{v\in H^1_0(\Omega) \ , \ \nabla\cdot v= 0\text{ in }\Omega
		\right\}\, ,
	\end{array}
	$$ in which the Poincaré inequality
	\begin{equation}\label{poincare-const}
		\Lambda (\Omega)\| v\|^2_{L^2(\Omega)}\le\|\nabla v\|^2_{L^2(\Omega)}\qquad\mbox{with}\qquad
		\Lambda (\Omega)=\inf_{v\in V_0(\Omega)\setminus \{0\} }\frac{\|\nabla v\|^2_{L^2(\Omega)}}{\| v\|^2_{L^2(\Omega)}}
	\end{equation}
	holds for some $\Lambda(\Omega)>0$. Let us introduce a constant measuring the magnitude of the flow $(\U_{\alpha,\beta}, \P_{\alpha, \beta})$ when the outer cylinder is still, namely,
	\begin{equation}\label{M-alphabeta-new}
		M_{\rm out}(\alpha, \beta):= \left[\alpha^2 \frac{R_2^4}{(R_2^2 -R_1^2)^2R_1^2} + \frac{\beta^2}{16} \left(\frac{R_2^2-R_1^2}{R_1\log(R_2^2/R_1^2)}-R_1  \right)^2\right]^{\frac{1}{2}}\qquad\forall \ \alpha,\beta\in\R.
	\end{equation}
	Allowing a {\em finite-energy perturbation of the infinite-energy solution} \eqref{tcg}, we prove the following stability result:
	
	\begin{theorem}\label{uniqueness}
		Fix $\alpha,\beta \in \mathbb{R}$.  Let  $\Lambda(\Omega)$ and $M_{\rm out}(\alpha, \beta)$  be as in \eqref{poincare-const} and
		\eqref{M-alphabeta-new}, respectively. Assume that $(v, q)\in V_0(\Omega)\times L^2(\Omega)$ is such that $(\U_{\alpha,\beta} + v , \P_{\alpha,\beta} + q)$  solves \eqref{SNSEcyl}-\eqref{nsstokes0}$_1$. If
		\begin{equation}\label{small-cond-theo}
			M_{\rm out}\left(\alpha, \beta \right) < \Lambda (\Omega)\,,
		\end{equation} then $(v,q)\equiv(0,0)$ in $\Omega$. In particular, the result holds if
$$M_{\rm out}(\alpha, \beta) < \max\left\{\frac{\pi^2}{2R_2^2},\frac{8}{(R_2^2 - R_1^2)\log (R_2/R_1)}\right\}.
$$
	\end{theorem}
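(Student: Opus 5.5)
Subtracting the equations satisfied by $(\U_{\alpha,\beta},\P_{\alpha,\beta})$ from those for $(\U_{\alpha,\beta}+v,\P_{\alpha,\beta}+q)$, the perturbation solves
\begin{equation*}
-\Delta v + \U_{\alpha,\beta}\cdot\nabla v + v\cdot\nabla \U_{\alpha,\beta} + v\cdot\nabla v + \nabla q = 0,\qquad \nabla\cdot v=0 \ \text{ in }\Omega,
\end{equation*}
with $v=0$ on $\partial\Omega$, since both flows share the boundary data \eqref{nsstokes0}$_1$. We test this system with $v$ itself. Because $\Omega$ is unbounded in $z$ and $\U_{\alpha,\beta}$ has infinite energy, we first justify the identity. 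By the regularity already quoted, $v$ is smooth. We multiply by $v\,\chi_R(z)$, with $\chi_R$ a cutoff in $z$ of height $R$, integrate, and let $R\to\infty$. All boundary terms coming from $\chi_R'$ involve $\U_{\alpha,\beta}$, which is bounded in $L^\infty$ and independent of $z$, together with $|v|^2$, $|q||v|$ and $|v|^3$. These terms vanish in the limit because $v\in H^1$ (so $v\in L^3\cap L^6$) and $q\in L^2$.

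In the limit, $\int(\U_{\alpha,\beta}\cdot\nabla v)\cdot v=0$ and $\int (v\cdot\nabla v)\cdot v=0$ because of the divergence-free conditions and $v|_{\partial\Omega}=0$, and the pressure term drops out. We are left with
\begin{equation*}
\|\nabla v\|_{L^2(\Omega)}^2 = -\int_\Omega (v\cdot\nabla \U_{\alpha,\beta})\cdot v .
\end{equation*}
The right-hand side is bounded by $\|\nabla \U_{\alpha,\beta}\|_{L^\infty}\|v\|^2_{L^2}$, with the pointwise operator norm of the gradient matrix. We then compute this norm explicitly. In cylindrical coordinates the only nonzero entries of $\nabla\U_{\alpha,\beta}$ are $\alpha(\U^C)'$, the term $-\alpha\U^C/\rho$ (from the $\theta$-derivative of $e_\theta$, in the $(\theta,\rho)$ slot), and $\beta(\U^P)'$.

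The key observation is that we should exploit the symmetric part. The antisymmetric part does not contribute to $(v\cdot\nabla\U)\cdot v$, so only the symmetric part $S$ matters. Its $\rho\theta$ entry is $\frac{\alpha}{2}\big((\U^C)'-\U^C/\rho\big)=-\alpha\frac{R_1R_2^2}{(R_2^2-R_1^2)\rho^2}$ and its $\rho z$ entry is $\frac{\beta}{2}(\U^P)'$. The eigenvalues of $S$ are therefore $\pm\frac12\sqrt{\cdot}$ of the sum of the squares of the twice-entries, giving
\begin{equation*}
|(v\cdot\nabla\U_{\alpha,\beta})\cdot v|\le \Big[\alpha^2\tfrac{R_1^2R_2^4}{(R_2^2-R_1^2)^2\rho^4}+\tfrac{\beta^2}{4}((\U^P)')^2\Big]^{1/2}|v|^2 .
\end{equation*}
Next we maximize over $[R_1,R_2]$. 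The first term is maximal at $\rho=R_1$, where it equals $\alpha^2R_2^4/((R_2^2-R_1^2)^2R_1^2)$. Since $(\U^P)'=\frac14\big(2\rho-\frac{R_2^2-R_1^2}{\rho\log(R_2/R_1)}\big)$ and $\rho\mapsto|(\U^P)'|$ should be largest at an endpoint, $\rho=R_1$ gives $\frac{\beta^2}{16}\big(\frac{R_2^2-R_1^2}{R_1\log(R_2^2/R_1^2)}-R_1\big)^2$. This matches $M_{\rm out}$ in \eqref{M-alphabeta-new}, up to the check that the endpoint $R_1$ dominates $R_2$ for the $\beta$-term. This bookkeeping, and the comparison between the two endpoints in particular, is the step I expect to need the most care, because the constants must come out exactly.

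With that bound we obtain $\|\nabla v\|^2\le M_{\rm out}\|v\|^2\le \frac{M_{\rm out}}{\Lambda(\Omega)}\|\nabla v\|^2$ by \eqref{poincare-const}. Under \eqref{small-cond-theo} this forces $\nabla v=0$, hence $v=0$. Then $\nabla q=0$, and $q\in L^2$ gives $q=0$.

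For the final explicit condition, it suffices to give lower bounds for $\Lambda(\Omega)$. The bound $\Lambda\ge \pi^2/(2R_2^2)$ should come from the one-dimensional Poincaré inequality on segments in $x$ (or $y$) of length at most $2R_2$, applied to each component of $v$ with vanishing trace. The bound $\Lambda\ge 8/((R_2^2-R_1^2)\log(R_2/R_1))$ should come from a radial Hardy/Poincaré-type inequality $\int_{R_1}^{R_2}\rho|w'|^2d\rho\ge c\int_{R_1}^{R_2}\rho w^2d\rho$ for $w(R_1)=w(R_2)=0$. I would obtain it by testing against $\U^P$: the function $-\U^P$ is positive and satisfies $-(\rho (\U^P)')'=-\rho$, and a Picone/Cauchy–Schwarz argument gives $c\ge 1/\max(-\U^P)$. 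Bounding $\max(-\U^P)=-\U^P(R_0)$ from \eqref{R0} by $(R_2^2-R_1^2)\log(R_2/R_1)/8$ then yields the claimed constant.
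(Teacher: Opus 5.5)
Your core argument is the paper's. It runs through the energy identity $\|\nabla v\|_{L^2}^2=-\int_\Omega (v\cdot\nabla\U_{\alpha,\beta})\cdot v$, then reduces to the symmetric part, whose eigenvalues $0,\pm\Upsilon(\rho)$ give exactly the paper's $\Upsilon$. It then uses the bound $\Upsilon\le\Upsilon(R_1)=M_{\rm out}(\alpha,\beta)$ and closes with \eqref{poincare-const}. Your cutoff justification of the energy identity and the explicit step $\nabla q=0$, $q\in L^2\Rightarrow q=0$ fill in details the paper leaves implicit.

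The endpoint comparison you flag is short. The function $h(\rho)=\rho-\frac{R_2^2-R_1^2}{\rho\log(R_2^2/R_1^2)}$ is increasing. It is in fact concave, so it is monotonicity, not the convexity the paper states, that matters. With $t=R_2/R_1$, the condition $h(R_1)+h(R_2)\le 0$ is equivalent to $t-1/t\ge 2\log t$. This holds because the difference vanishes at $t=1$ and has derivative $(1-1/t)^2\ge 0$.

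The explicit lower bounds for $\Lambda(\Omega)$ are where you depart from the paper, and as written neither reaches the stated constant.

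\emph{First bound.} A one-dimensional Poincaré inequality in $x$ alone, on segments of length at most $2R_2$, gives only $\Lambda(\Omega)\ge \pi^2/(4R_2^2)$. You must add the $x$- and $y$-inequalities. This is the same as the paper's use of the first Dirichlet eigenvalue $\pi^2/(2R_2^2)$ of the square $(-R_2,R_2)^2$.

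\emph{Second bound.} Your Picone/Barta argument with $\phi=-\U^P$ is valid and gives $\Lambda(\Omega)\ge 1/\max\phi$. However, the comparison $-\U^P(R_0)\le (R_2^2-R_1^2)\log(R_2/R_1)/8$ is only asserted, and it is not evident from the formula. One way to close it:
\begin{itemize}
\item Start from the pointwise bound $|w(\rho)|^2\le \frac{\log(R_2/R_1)}{4}\int_{R_1}^{R_2} r|w'|^2\,dr$ for $w(R_1)=w(R_2)=0$. It follows from $2|w(\rho)|\le\int_{R_1}^{R_2}|w'|$ and Cauchy--Schwarz.
\item Apply it to $w=\phi$, and use $\int r\phi'^2=\int r\phi\le \max\phi\,(R_2^2-R_1^2)/2$.
\end{itemize}
But multiplying that same pointwise bound by $\rho$ and integrating is already the paper's entire proof of \eqref{boundPoincare}$_2$, so the paper's route is shorter.

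What your route buys is sharpness. With $L=\log(R_2/R_1)$ one finds $-\U^P(R_0)=\frac{R_0^2}{4}\big(L\coth L-1+\log\frac{\sinh L}{L}\big)$. The inequalities $L\coth L\le 1+L^2/3$ and $\sinh L/L\le e^{L^2/6}$ then give $-\U^P(R_0)\le (R_2^2-R_1^2)\log(R_2/R_1)/16$. This is twice the paper's lower bound for $\Lambda(\Omega)$.
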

	\begin{proof} Since $(\U_{\alpha,\beta}, \P_{\alpha, \beta})$ solves \eqref{SNSEcyl}-\eqref{nsstokes0}$_1$,
		the perturbation $(v,q)\in V_0(\Omega)\times L^2(\Omega)$ satisfies
		\begin{equation}\label{perturbNS}\begin{cases}
				-\Delta v + v \cdot \nabla v +\U_{\alpha,\beta} \cdot \nabla v  + v \cdot \nabla \U_{\alpha,\beta} + \nabla q =0, \quad \nabla\cdot v =0 \quad \text{in} \quad \Omega, \\[5pt]
				v = 0\quad \text{on}\quad \Gamma_1\, ,\qquad v =0\quad \text{on}\quad \Gamma_2\, ,
			\end{cases}
		\end{equation}and, as mentioned above, it is a classical solution to \eqref{perturbNS}.
		Taking the $L^2(\Omega)$-scalar product of \eqref{perturbNS} with $v$, integrating by parts and using the divergence-free condition then yields
		\begin{equation}\label{dirnorm-v}
			\int_\Omega |\nabla v|^2  = - \int_{\Omega}\left(v\cdot \nabla \U_{\alpha,\beta}\right) \cdot v.
		\end{equation}
		After writing in cylindrical coordinates
		\begin{equation}
			\int_\Omega (	v\cdot \nabla \U_{\alpha,\beta} )\cdot v = \int_{(R_1,R_2)\times (0,2\pi)\times \mathbb{R}}\rho v^TA(\rho)v
		\end{equation}
		with the symmetric matrix
		\begin{equation}\label{Arho}
			A(\rho)=\left( \begin{matrix}
				0& \frac{\alpha}{2}\left(\frac{\U^C(\rho)}{\rho}-\frac{d \U^C(\rho)}{d\rho}\right)&-\frac{\beta}{2}\frac{d \U^P(\rho)}{d \rho}\\[5pt]
				\frac{\alpha}{2}\left(\frac{\U^C(\rho)}{\rho}-\frac{d \U^C(\rho)}{d\rho}\right)&0&0\\[5pt]
				-\frac{\beta}{2}\frac{d \U^P(\rho)}{d \rho}&0&0
			\end{matrix}\right).
		\end{equation}
		We know that there exists $\Upsilon(\rho)\in \mathbb{R}$ such that, for any $\rho\in [R_1,R_2]$, the quadratic form $v^T A(\rho) v $ is bounded from above by $\Upsilon(\rho) |v|^2 $. The optimal $\Upsilon(\rho)$ is the maximum eigenvalue of the matrix $A(\rho)$, namely,
		\begin{equation*}
				0<	\Upsilon(\rho)= \left[\alpha^2 \frac{R_1^2R_2^4}{(R_2^2 -R_1^2)^2\rho^4} + \frac{\beta^2}{16} \left(\rho -\frac{R_2^2-R_1^2}{\rho \log (R_2^2/R_1^2)}  \right)^2\right]^{\frac{1}{2}}.
		\end{equation*}
		Some lengthy computations show that
		$$
		h(\rho):=\rho-\frac{R_2^2-R_1^2}{\rho\log(R_2^2/R_1^2)}\mbox{ is convex},\quad h(R_1)<0<h(R_2)<|h(R_1)|\, .
		$$
		Hence, since $\rho\mapsto\rho^{-4}$ is decreasing, we infer that $\Upsilon(\rho)\le\Upsilon(R_1)=M_{\rm out}(\alpha,\beta)$ for all
		$\rho\in[R_1,R_2]$, with $M_{\rm out}(\alpha, \beta)$ given by \eqref{M-alphabeta-new}. Combining this uniform bound with \eqref{dirnorm-v} and \eqref{poincare-const} implies
		\begin{equation}\label{ineq-gradL2diff-new}
			\int_{\Omega}|\nabla v|^2 \leq \int_{(R_1,R_2)\times (0,2\pi)\times \mathbb{R}}\Upsilon(\rho) \rho |v|^2 \leq M_{\rm out}(\alpha, \beta)\int_{\Omega}|v|^2 \leq \frac{M_{\rm out}(\alpha, \beta)}{ \Lambda(\Omega)} \int_\Omega |\nabla v|^2.
		\end{equation}
		
		We see from \eqref{ineq-gradL2diff-new} that, if
		\eqref{small-cond-theo} holds, then $\nabla v \equiv 0$ in $\Omega$ and, due to the homogeneous boundary conditions, $v\equiv 0$ in $\Omega$. An analogous argument also gives that $ q\equiv0$ in $\Omega$.\par
		The second statement follows directly from the (just proved) first statement provided that
		\begin{equation}\label{boundPoincare}
			\Lambda (\Omega)\ge\frac{\pi^2}{2R_2^2}\quad\mbox{and}\quad\Lambda(\Omega)\ge\frac{8}{(R_2^2 - R_1^2)\log (R_2/R_1)}\, ,
		\end{equation}
		where $\Lambda(\Omega)$ is as in \eqref{poincare-const}. For the first inequality, we show that
		\begin{equation}\label{sobolevconstants11}
			\|v\|_{L^2(\Omega)}^2\leq \frac{2R_2^2}{\pi^2}\, \|\nabla v\|_{L^2(\Omega)}^2\hspace*{5mm}\forall v\in H_0^1(\Omega),
		\end{equation}which, in particular, holds for all $ v\in V_0(\Omega)\subset H^1_0(\Omega)$.
		Let us first prove \eqref{sobolevconstants11} for scalar functions $v\in H_0^1(\Omega)$. Since $\Omega\subset Q:=(-R_2,R_2)^2\times\R$, by
		trivial extension of any $v\in H_0^1(\Omega)$ to all $Q$ (the circumscribed unbounded parallelepiped), we have that
		$\Lambda(\Omega)\ge\Lambda(Q)$. By density, we may restrict our attention to functions $v\in H^1_0(\Omega)$ whose support is contained in
		$[-R_2,R_2]^2\times I$, where $I$ is an open bounded interval. Since $\cos(\tfrac{\pi x}{2R_2})\cos(\tfrac{\pi y}{2R_2})$ is an eigenfunction
		of $-\Delta$ corresponding to the least Dirichlet eigenvalue in the square $(-R_2,R_2)^2$, the resulting Poincaré inequality is
		$$
		\|w\|_{L^2((-R_2,R_2)^2)}^2\le\frac{2R_2^2}{\pi^2}\big(\|w_x\|_{L^2((-R_2,R_2)^2)}^2+\|w_y\|_{L^2((-R_2,R_2)^2)}^2\big)\qquad
		\forall w\in H^1_0((-R_2,R_2)^2)\, .
		$$
		Therefore, for any $v$ in the dense subspace characterized above and for a.e.\ $z\in I$ we have
		$$
		\int_{(-R_2,R_2)^2}v(x,y,z)^2dxdy\le\frac{2R_2^2}{\pi^2}\int_{(-R_2,R_2)^2}\big(v_x(x,y,z)^2+v_y(x,y,z)^2\big)dxdy
		$$
		which, after integration for $z\in I$ and by density, proves \eq{sobolevconstants11} for scalar functions.\par
		If $v=(v_1,v_2,v_3)\in H_0^1(\Omega)$ is a vector field, then $v_i\in H_0^1(\Omega)$ for $i=1,2,3$ and, by \eq{sobolevconstants11} for scalars,
		$$
		\begin{aligned}
			\|v\|^2_{L^2(\Omega)} & = \|v_{1}\|^2_{L^2(\Omega)} + \|v_{2}\|^2_{L^2(\Omega)}+ \|v_3\|^2_{L^2(\Omega)} \\
			& \leq \frac{2R_2^2}{\pi^2} \left( \| \nabla v_{1}\|^{2}_{L^2(\Omega)} + \| \nabla v_{2}\|^{2}_{L^2(\Omega)}+ \| \nabla v_3\|^{2}_{L^2(\Omega)}\right)
			=\frac{2R_2^2}{\pi^2}\| \nabla v \|^{2}_{L^2(\Omega)} \, ,
		\end{aligned}
		$$
		thereby proving \eqref{sobolevconstants11} also for vector fields. Hence, \eqref{boundPoincare}$_1$ follows.\par
		In order to prove \eqref{boundPoincare}$_2$, we argue as for \eqref{sobolevconstants11} and we restrict to scalar functions $v\in C^\infty_c(\Omega)$ that can be extended trivially outside
		$\Omega$ obtaining $v\in C^\infty_c(\R^3)$. Passing to cylindrical coordinates and due to the homogeneous boundary conditions on both $\Gamma_1$ and $\Gamma_2$, we have 
	for any fixed $(\theta,z)\in[0,2\pi)\times\R$ 
	\begin{eqnarray*}
		v(\rho,\theta, z)=
		\int_{R_1}^\rho\partial_\rho v(r, \theta, z) dr, \qquad
		v(\rho, \theta,z)=
		-\int_\rho^{R_2}\partial_\rho v(r, \theta, z) dr\quad\forall\rho\in(R_1,R_2).
	\end{eqnarray*}
	Combining both equalities and using H\"older's inequality, we find for all $\rho\in(R_1,R_2)$ 
	\begin{eqnarray*}
		|v(\rho,\theta,z)|^2&\le&
	\frac{1}{4}	\left(\int_{R_1}^{R_2}\left| \partial_\rho v(r, \theta, z)\right|dr\right)^2\le	\frac{1}{4}\left(\int_{R_1}^{R_2} \frac{dr}{r} \right)\left(\int_{R_1}^{R_2}\left|\partial_\rho v(r, \theta, z)\right|^2r dr\right)\, \\&=& \frac{\log\left(R_2/R_1\right)}{4} \left(\int_{R_1}^{R_2}\left|\partial_\rho v(r, \theta, z)\right|^2r dr\right).
	\end{eqnarray*}
	After multiplying both sides by $\rho$ and integrating with respect to $(\rho, \theta, z)\in (R_1,R_2)\times (0,2\pi)\times \mathbb{R}$, we transform back to cartesian coordinates and obtain the bound
	\begin{equation}
		\int_\Omega |v|^2\leq \frac{(R_2^2 - R_1^2) \log\left(R_2/R_1\right) }{8}\int_\Omega |\nabla v|^2,
	\end{equation}
		thereby proving \eqref{boundPoincare}$_2$.		
	\end{proof}
	
	We know from \cite[p.226]{temam2001navier} that if $( \U_{\alpha,0}+v,  \P_{\alpha,0}+q)$  solves \eqref{SNSEcyl}-\eqref{nsstokes0}$_1$
	and if $v$ is either $z$-independent or its vertical component $v_z$ vanishes
	as $|z|\to\infty$, then, $(v,q)\equiv(0,0)$ regardless of the value of $\alpha$. Theorem \ref{theo-tcg} generalizes this stability result. In fact, if one requires periodicity on $z$, or independence of $z$, or $u_z\rightarrow 0$ as $|z|\to\infty$, the constant $\beta\in \mathbb{R}$ in the
	spiral Poiseuille flows \eqref{tcg} must necessarily be $\beta=0$ and $( \U_{\alpha,0},  \P_{\alpha, 0})$ coincides with the circular Couette flows $(\alpha{\mathcal U^C}e_\theta,\alpha^2{\P^C})$, see \eqref{tc1}, and the smallness condition
	\eqref{small-cond-theo} ensuring stability reduces to
	\begin{equation*}
		|\alpha| < \frac{(R_2^2-R_1^2)R_1}{R_2^2}\Lambda(\Omega).
	\end{equation*}
	We also recall from \cite{temam2001navier,zeidlerIV} that $(\alpha{\mathcal U}^Ce_\theta,\alpha^2{\P^C})$ is the unique axially symmetric and $z$-periodic solution to \eqref{SNSEcyl}-\eqref{nsstokes0}$_1$ for sufficiently small $|\alpha|$.
	
	In the appendix, we complement Theorem \ref{uniqueness} with some qualitative comments on the behaviour of the stability condition \eqref{small-cond-theo} as $R_1$ and $R_2$ vary, see Proposition \ref{R1R2vary}.
	
	\subsection{The case of still inner cylinder.}\label{sec:velocityoutercylinder}
	
	We assume here that the inner cylinder $\Gamma_1$ is still while the outer cylinder $\Gamma_2$ is rotating with constant angular velocity
	$\alpha/R_2$. Then, we need to replace \eqref{nsstokes0}$_1$ by \eqref{nsstokes0}$_2$.
	With the very same proof, instead of Theorem \ref{theo-tcg} we obtain:
	
	\begin{theorem}\label{theo-tcg2}
		Let $({\widetilde\U^C},{\widetilde\P^C})$ be as in \eqref{tc2}, let $\U^P$ be as in \eqref{Ualphabeta}. Then the spiral Poiseuille flows
		\begin{equation} \label{tcg2}
			\begin{aligned}
				&\widetilde\U_{\alpha,\beta}(\rho) := \alpha\widetilde\U^C(\rho)e_\theta+ \beta\U^P(\rho)e_z ,\\[5pt]
				& \widetilde\P_{\alpha,\beta}(\rho,z) := \alpha^2\widetilde\P^C(\rho)+ \beta z,\end{aligned}  \qquad \text{for} \quad  (\rho, z)\in [R_1,R_2]\times \mathbb{R}\,
		\end{equation}
		are the only partially-invariant solutions  to \eqref{SNSEcyl}-\eqref{nsstokes0}$_2$ as $ \beta$ varies in $\mathbb{R}$.
	\end{theorem}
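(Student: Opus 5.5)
We follow the proof of Theorem \ref{theo-tcg} step by step. The only change is the boundary data, which are now \eqref{nsstokes0}$_2$ instead of \eqref{nsstokes0}$_1$.

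Let $(u,p)$ be a partially-invariant solution. By \eqref{assumption}, incompressibility again gives $u_\rho=A(\theta,z)/\rho$. Since the prescribed velocity $\alpha e_\theta$ on $\Gamma_2$ has zero radial component, and $u=0$ on $\Gamma_1$, we get $A\equiv0$, so $u_\rho\equiv0$. The reduced system \eqref{uno}--\eqref{tres} is therefore unchanged, because it never involved the boundary data. Step 1 then carries over verbatim: in \eqref{sinB} the right-hand side is independent of $\theta$, which yields the alternative \eqref{either}.

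In case (a), $u_\theta=u_\theta(\rho)$ solves the Euler equation $\rho^2u_\theta''+\rho u_\theta'-u_\theta=0$, whose general solution is $c_1\rho+c_2/\rho$. We now impose $u_\theta(R_1)=0$ and $u_\theta(R_2)=\alpha$. The resulting $2\times2$ linear system is uniquely solvable and gives $u_\theta=\alpha\widetilde\U^C$ from \eqref{tc2}. Integrating \eqref{uno} yields $p=D(\rho)+E(z)$. The same separation argument as before forces $E'\equiv\beta$, and $D$ is determined as $\alpha^2\widetilde\P^C$ up to a constant.

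The equation for $u_z$ becomes \eqref{F} with the weight $w_\alpha$ replaced by $\widetilde w_\alpha=\alpha\widetilde\U^C$. It keeps homogeneous Dirichlet data on both cylinders, since the $z$-components of both boundary data vanish. Hence the variational formulation on $V$ in \eqref{V} is the same. The drift term still integrates to zero by $2\pi$-periodicity in $\theta$: it is $\tfrac12\partial_\theta(\widetilde w_\alpha v^2)$ with $\widetilde w_\alpha$ independent of $\theta$. So coercivity holds, and Lax--Milgram gives uniqueness. The radial solution is exactly $\beta\U^P$ from \eqref{uz-explicit}, which does not involve $\alpha$, so uniqueness identifies $u_z=\beta\U^P$.

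Case (b) is identical to before. Differentiating \eqref{uno} in $z$ gives $u_\theta^2=C(\rho)z+K(\rho)$. Smoothness of $u_\theta$ on all of $\mathbb R$ (a square must stay nonnegative for all $z$) forces $C\equiv0$, which reduces to case (a).

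There is no genuine obstacle. The only points to check are that none of the steps used the specific form of $\U^C$:
\begin{itemize}
\item that $A\equiv0$, which only needs the radial boundary data to vanish;
\item the coercivity of $B$, which only needs $\widetilde w_\alpha$ to be independent of $\theta$.
\end{itemize}
Both hold here, so the conclusion \eqref{tcg2} follows for every $\beta\in\R$.
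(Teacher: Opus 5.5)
Your proposal is correct and follows the paper's own route: the paper proves Theorem \ref{theo-tcg2} simply by repeating the proof of Theorem \ref{theo-tcg}, and you correctly locate the only places where the boundary data enter. These are the vanishing of $u_\rho$, the constants in $u_\theta=c_1\rho+c_2/\rho$ (which give $\alpha\widetilde\U^C$), and the homogeneous Dirichlet data for $u_z$; coercivity of $B$ only needs the drift coefficient $\alpha\widetilde\U^C$ to be independent of $\theta$. Your justification in case (b), that $u_\theta^2=C(\rho)z+K(\rho)$ must stay nonnegative for all $z\in\R$, is in fact a sharper reason for $C\equiv0$ than the paper's appeal to smoothness.
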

	
	As expected, \eqref{tcg} and \eqref{tcg2} coincide when $\alpha=0$ (both cylinders are still).\par
	For the stability result, the equation to be considered is again \eqref{perturbNS}, as in the case of still outer cylinder.
	Therefore, neither the Poincaré constant \eqref{poincare-const} nor the space $V_0(\Omega)$ change.
	However, although the symmetric matrix $\widetilde A(\rho)$ in \eqref{Arho} is formally the same, now $\widetilde\U_{\alpha,\beta}$ is as in \eqref{tc2}. Therefore,
	$$
	\widetilde\Upsilon(\rho)=\left[\alpha^2\frac{R_1^4R_2^2}{(R_2^2-R_1^2)^2\rho^4}
	+\frac{\beta^2}{16}\left(\rho-\frac{R_2^2-R_1^2}{\rho\log(R_2^2/R_1^2)}\right)^2\right]^{\frac{1}{2}},
	$$
	and the same computations then show that $\widetilde\Upsilon(\rho)\le\widetilde\Upsilon(R_1)=M_{\rm in}(\alpha,\beta)$ for all $\rho\in[R_1,R_2]$, where
	\begin{equation}\label{M-alphabeta-new2}
		M_{\rm in}(\alpha,\beta):= \left[\alpha^2 \frac{R_2^2}{(R_2^2 -R_1^2)^2}+ \frac{\beta^2}{16}
		\left(\frac{R_2^2-R_1^2}{R_1\log(R_2^2/R_1^2)}-R_1  \right)^2\right]^{\frac{1}{2}}\qquad\forall \ \alpha,\beta\in\R,
	\end{equation}
	which replaces the constant $M_{\rm out}(\alpha,\beta)$ in \eqref{M-alphabeta-new}. Then, the counter-part of Theorem \ref{uniqueness} reads
	
	\begin{theorem}\label{uniqueness2}
		Fix $\alpha,\beta\in\mathbb{R}$. Let $\Lambda(\Omega)$ and $M_{\rm in}(\alpha,\beta)$ be as in \eqref{poincare-const} and \eqref{M-alphabeta-new2}, respectively. Assume that $(v,q)\in V_0(\Omega)\times L^2(\Omega)$
		is such that $(\widetilde\U_{\alpha,\beta} + v , \widetilde\P_{\alpha,\beta} + q)$  solves \eqref{SNSEcyl}-\eqref{nsstokes0}$_2$. If
		\begin{equation}\label{small-cond-theo2}
			M_{\rm in}(\alpha,\beta) < \Lambda (\Omega)\,,
		\end{equation}
		then $(v,q)\equiv(0,0)$ in $\Omega$.
	\end{theorem}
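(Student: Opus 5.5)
The argument is the energy argument used for Theorem \ref{uniqueness}, with $\U_{\alpha,\beta}$ replaced by $\widetilde\U_{\alpha,\beta}$ from \eqref{tcg2}.

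\textbf{Perturbation system and energy identity.} Since $(\widetilde\U_{\alpha,\beta},\widetilde\P_{\alpha,\beta})$ solves \eqref{SNSEcyl}-\eqref{nsstokes0}$_2$ by Theorem \ref{theo-tcg2}, the difference $(v,q)$ solves \eqref{perturbNS} with $\U_{\alpha,\beta}$ replaced by $\widetilde\U_{\alpha,\beta}$. The boundary conditions on $v$ are again homogeneous on $\Gamma_1\cup\Gamma_2$, because both flows carry the same data. Elliptic regularity makes $(v,q)$ classical. Testing with $v\in V_0(\Omega)$ and integrating by parts gives
$$\int_\Omega|\nabla v|^2=-\int_\Omega (v\cdot\nabla\widetilde\U_{\alpha,\beta})\cdot v .$$
The other terms drop out for standard reasons. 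The pressure term and the two convective terms $v\cdot\nabla v\cdot v$ and $\widetilde\U_{\alpha,\beta}\cdot\nabla v\cdot v$ vanish by $\nabla\cdot v=0$, $\nabla\cdot\widetilde\U_{\alpha,\beta}=0$ and $v|_{\partial\Omega}=0$. The integration by parts on the unbounded domain is justified by approximating $v$ with compactly supported fields, using that $\widetilde\U_{\alpha,\beta}$ and its gradient are bounded, since they depend only on $\rho\in[R_1,R_2]$.

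\textbf{Pointwise bound on the quadratic form.} In cylindrical coordinates the right-hand side equals $-\int\rho\, v^T\widetilde A(\rho)v$, where $\widetilde A$ is the matrix \eqref{Arho} with $\U^C$ replaced by $\widetilde\U^C$. Its largest eigenvalue is $\widetilde\Upsilon(\rho)$, computed from the off-diagonal entries. The entry $\tfrac{\alpha}{2}(\widetilde\U^C/\rho-(\widetilde\U^C)')$ equals $-\alpha R_2R_1^2/((R_2^2-R_1^2)\rho^2)$. The entry $\tfrac{\beta}{2}(\U^P)'$ equals $\tfrac{\beta}{4}h(\rho)$, with $h$ as in the proof of Theorem \ref{uniqueness}. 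Hence the eigenvalues are $0$ and $\pm\widetilde\Upsilon(\rho)$, so $|v^T\widetilde A v|\le\widetilde\Upsilon|v|^2$.

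Next I bound $\widetilde\Upsilon$ uniformly. The $\alpha$-term is decreasing in $\rho$, and $h^2$ is maximal at $R_1$ by the property already established: $h$ is convex with $h(R_1)<0<h(R_2)<|h(R_1)|$. Therefore $\widetilde\Upsilon(\rho)\le\widetilde\Upsilon(R_1)=M_{\rm in}(\alpha,\beta)$ from \eqref{M-alphabeta-new2}.

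\textbf{Conclusion.} Combining these bounds with the Poincaré inequality \eqref{poincare-const} gives
$$\|\nabla v\|_2^2\le M_{\rm in}\|v\|_2^2\le \frac{M_{\rm in}}{\Lambda(\Omega)}\|\nabla v\|_2^2 .$$
Under \eqref{small-cond-theo2} this forces $\nabla v=0$, and the boundary condition then gives $v=0$. The equation reduces to $\nabla q=0$, so $q$ is constant, and $q\in L^2(\Omega)$ on the unbounded $\Omega$ forces $q=0$.

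The only step that needs real care is the eigenvalue and monotonicity bound on $\widetilde\Upsilon$. It rests on the convexity and endpoint inequalities for $h$ already established, so I expect no genuine obstacle beyond that bookkeeping.
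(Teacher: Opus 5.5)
Your proposal is correct and follows the same route as the paper: the energy identity for \eqref{perturbNS} with $\widetilde\U_{\alpha,\beta}$, the eigenvalue bound $\widetilde\Upsilon(\rho)\le\widetilde\Upsilon(R_1)=M_{\rm in}(\alpha,\beta)$, and the Poincar\'e inequality \eqref{poincare-const}. Your treatment of $q$ (constant, and in $L^2$ on an unbounded domain, hence zero) spells out what the paper only calls ``analogous''.

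One correction to the justification you took from the paper. With $c=\frac{R_2^2-R_1^2}{\log(R_2^2/R_1^2)}>0$, the function $h(\rho)=\rho-c/\rho$ is in fact concave, and convexity would not give the endpoint bound anyway. What you need is $h'(\rho)=1+c/\rho^2>0$. Then $h(R_1)\le h(\rho)\le h(R_2)$, and together with $h(R_2)<|h(R_1)|$ this gives $h(\rho)^2\le h(R_1)^2$.
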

	
	Notice that $M_{\rm out}(\alpha,\beta)<M_{\rm in}(\alpha,\beta)$ whenever $\alpha\neq0$ but, in fact, the two conditions
	\eqref{small-cond-theo} and \eqref{small-cond-theo2} coincide in terms of the angular velocities $w_i=\alpha/R_i$ ($i=1,2$).
	When $\beta=0$, both \eqref{small-cond-theo} and \eqref{small-cond-theo2} become
	$$
	|w|<\left(1-\frac{R_1^2}{R_2^2}\right)\Lambda(\Omega)\, .
	$$

	\section{Boundary conditions involving the vorticity}\label{boundaryvortex}

    \subsection{Derivation and connection with Navier slip conditions.}
	We start this section with an analysis of the vorticity associated with the spiral Poiseuille flows \eqref{tcg} and \eqref{tcg2}, which are the only partially-invariant solutions to \eqref{SNSEcyl}-\eqref{nsstokes0}, see Theorems \ref{theo-tcg} and \ref{theo-tcg2}. We give full details only for the first case (with still outer cylinder), the other case being similar. This vorticity reads
	\begin{equation}\label{vorticity-tcg}
		\omega_{\alpha,\beta}:= \nabla \times  \U_{\alpha,\beta}=\frac{\beta}{4}\left(\frac{R_2^2-R_1^2}{\log(R_2/R_1)\rho}
		-2\rho\right)e_\theta -\frac{\alpha 2R_1}{R_2^2-R_1^2} e_z,
	\end{equation}
	which is constant if and only if $\beta=0$. Let us focus on its azimuthal component
	$$
	\frac{\beta}{4}\left(\frac{R_2^2-R_1^2}{\log(R_2/R_1)\rho}-2\rho\right)e_\theta.
	$$	
	It vanishes at $R_0$, see \eqref{R0},  independently of $\beta\neq 0$:
    $\omega_{0,\beta}(R_0,\theta,z)=0$ for any $(\theta,z)\in[0,2\pi)\times\R$, over the
    intermediate cylinder (of radius $R_0$) where the 
the velocity $\mathcal{U}_{0,\beta}$ attains its maximum value (in modulus), see Figure \ref{figvortex}.\par	
	In this section we partially replace the boundary conditions \eqref{nsstokes0}$_1$ with conditions involving $\nabla \times u$. To determine them, we use the explicit form \eqref{vorticity-tcg}. Following \cite[Section 2.1]{concaenglish},
	we consider the boundary value problem
	\begin{equation}\label{ns-vorti}
		\mbox{\eqref{SNSEcyl}}\, ,\qquad u\cdot \nu =0, \quad (\nabla \times u)\times \nu = \omega_{\alpha,\beta}\times \nu \quad \text{on}\quad \Gamma_1\, ,\qquad u =0\quad \text{on}\quad \Gamma_2\, .
	\end{equation}
From a mathematical point of view, the boundary condition at $\Gamma_1$ for the vorticity naturally appears when writing down the bilinear form in the weak formulation of \eqref{SNSEcyl}, see Theorem \ref{theo:unrestuniquevortouter}, using the identity $-\Delta u =\nabla\times (\nabla \times u)$ for divergence-free vector fields. The physical interpretation of the boundary conditions at $\Gamma_1$ is related to that of the Navier slip conditions. Indeed, in the cylindrical configuration we are considering, and using $u\cdot \nu =u_\rho=0$ on $\Gamma_1$, the following relation holds
	\begin{equation}\label{rel-bc}
		(	\nabla \times u) \times \nu = 2 \left[D(u)\nu\right]_\tau + \frac{2}{R_1}u_\theta e_\theta  \quad \text{on} \quad \Gamma_1,
	\end{equation}
	where the symmetric part of the gradient $$D(u)= \frac{\nabla u + \nabla u^T}{2} $$ is the strain-rate tensor and  $[f]_\tau=  f- (f\cdot \nu) \nu$ denotes the orthogonal projection onto the tangent plane to $\Gamma_1$. The second term on the right-hand side of \eqref{rel-bc} represents a geometric friction along the azimuthal direction due to the (horizontal) curvature of the cylinder $\Gamma_1$. As the radius $R_1$ increases and the boundary tends to be locally flat, the friction decreases and the partial slip tends to perfect slip in the azimuthal direction, whereas the fluid continues to perfectly slip in the vertical direction. We stress that the different behavior depending on the different tangential directions is a consequence of the anisotropy of the cylindrical geometry. Introducing the $3\times3$ anisotropic friction matrix  $$\mathcal{F}=\left(\begin{matrix}
		0&0&0\\0&2/R_1&0\\
		0&0&0
	\end{matrix}\right)$$ permits to recast \eqref{rel-bc} as
	\begin{equation}\label{Navier-bc}
		(	\nabla \times u)\times \nu = 2 \left[D(u)\nu\right]_\tau + \mathcal{F}\left[u\right]_\tau  \quad \text{on} \quad \Gamma_1,
	\end{equation}
	and the boundary conditions at $\Gamma_1$ in \eqref{ns-vorti} as inhomogeneous Navier slip boundary conditions with anisotropic friction. More precisely, we obtain
	\begin{equation}
		u\cdot \nu =\mathcal{U}_{\alpha,\beta}\cdot \nu =0, \quad 2 \left[D(u)\nu\right]_\tau + \mathcal{F}\left[u\right]_\tau = 2 \left[ D(\mathcal{U}_{\alpha,\beta})\nu\right]_\tau + \mathcal{F}\left[\mathcal{U}_{\alpha,\beta}\right]_\tau \quad \text{on}\quad \Gamma_1,\,
	\end{equation}with boundary data written in terms of the spiral Poiseuille flows $\mathcal{U}_{\alpha, \beta}$.
	Passing to cylindrical coordinates, the boundary conditions in \eqref{ns-vorti} then read
	\begin{equation}\label{bc-cylindrical}
		\begin{aligned}
			&\qquad u_\rho (R_1, \theta, z)=0, \qquad \frac{1}{R_1}u_\theta(R_1, \theta, z)+ \frac{\partial u_\theta}{\partial \rho}  (R_1, \theta, z)=(\omega_{\alpha, \beta})_z(R_1, \theta, z), \\ & \frac{\partial u_z}{\partial \rho} (R_1, \theta, z)= -(\omega_{\alpha, \beta})_\theta (R_1, \theta, z), \quad
			u_\rho (R_2, \theta, z)= u_\theta (R_2, \theta, z)= u_z (R_2, \theta, z)=0,
		\end{aligned}
	\end{equation}for $(\theta, z)\in [0,2\pi)\times \mathbb{R}$. Note that, differently from \eq{nsstokes0}$_1$, here $u_\theta$ and $u_z$ satisfy, respectively, a Robin and a Neumann boundary condition on $\Gamma_1$.
    
\subsection{Explicit derivation of partially-invariant solutions.} In this subsection, we find all partially-invariant solutions to \eqref{ns-vorti} in the sense of Definition \ref{kindsolutions} only in the case with a still outer cylinder, the other case being similar.
    
	\begin{theorem}\label{theo-tcg-vorti}
		Let $({ \U^C},{\P^C})$ be as in \eqref{tc1}. Then the spiral Poiseuille-Couette flows
		\begin{equation} \label{tcg-vorti}\begin{aligned}
				& \U_{\alpha,\beta,\gamma}(\rho) := \alpha\U^C(\rho)e_\theta+  \left[ \frac{\gamma}{4}\left(\rho^2 - R_2^2 -2R_1^2 \log (\rho/R_2)\right)  -\beta R_1(\omega_{0,1})_\theta(R_1)\log (\rho/R_2) \right] e_z,\\[5pt]
				&  \P_{\alpha,\gamma}(\rho,z) := \alpha^2\P^C(\rho)+ \gamma z, \qquad for \quad (\rho,z) \in[R_1,R_2]\times \mathbb{R}\,\end{aligned}
		\end{equation}
		are the only partially-invariant solutions  to \eqref{ns-vorti} as $\gamma$ varies in $\mathbb{R}$.
	\end{theorem}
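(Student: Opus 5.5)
The plan is to follow the two-step scheme of the proof of Theorem \ref{theo-tcg}, replacing the Dirichlet data on $\Gamma_1$ with the mixed conditions \eqref{bc-cylindrical}.

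\textbf{First step.} Incompressibility together with \eqref{assumption} again gives $u_\rho=A(\theta,z)/\rho$. Since $u_\rho(R_1,\theta,z)=0$ is still imposed on $\Gamma_1$ (and also on $\Gamma_2$), this forces $A\equiv0$. Hence the reduced system \eqref{uno}--\eqref{tres} holds exactly as before, and Step 1 yields the alternative \eqref{either} verbatim. Case (b) is reduced to case (a) by the same argument: $\partial_z(u_\theta^2)=C(\rho)$ must vanish, otherwise $C(\rho)z+K(\rho)$ becomes negative for large $|z|$.

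\textbf{Second step, $u_\theta$ and pressure.} In case (a), $u_\theta=u_\theta(\rho)$ solves $\rho^2u_\theta''+\rho u_\theta'-u_\theta=0$, so $u_\theta=a\rho+b/\rho$. The boundary conditions are $u_\theta(R_2)=0$ and the Robin condition $u_\theta(R_1)/R_1+u_\theta'(R_1)=(\omega_{\alpha,\beta})_z=-2\alpha R_1/(R_2^2-R_1^2)$. Since $u_\theta/\rho+u_\theta'=2a$ is exactly the vorticity of $a\rho+b/\rho$, this gives $a=-\alpha R_1/(R_2^2-R_1^2)$, and then $b=-aR_2^2$. Thus $u_\theta=\alpha\U^C$. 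Integrating \eqref{uno} then gives $p=\alpha^2\P^C(\rho)+E(z)$, and \eqref{tres} forces $E'\equiv\gamma$ constant. Note that $\gamma$ is a new free parameter, unrelated to $\beta$.

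\textbf{Second step, $u_z$.} The component $u_z(\rho,\theta)$ satisfies \eqref{F} with $\beta$ replaced by $\gamma$, now with the Neumann condition $\partial_\rho u_z(R_1,\theta)=-(\omega_{\alpha,\beta})_\theta(R_1)=:g$ (a constant) and $u_z(R_2,\theta)=0$. I would show uniqueness via Lax--Milgram on the space of $H^1$ functions, $2\pi$-periodic in $\theta$, vanishing only at $\rho=R_2$. The inhomogeneous Neumann datum enters the linear functional as a boundary term $-R_1 g\int_0^{2\pi}v(R_1,\theta)\,d\theta$, which is bounded by the trace inequality.

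The bilinear form \eqref{B} is unchanged. Its transport term integrates to zero by periodicity in $\theta$, exactly as before. Coercivity still holds because the Dirichlet condition at $R_2$ gives a one-dimensional Poincaré inequality in $\rho$. Alternatively, one can simply take the difference of two solutions, which satisfies homogeneous conditions, and conclude $B(w,w)=0\Rightarrow w=0$. I expect this adaptation of the functional framework to mixed conditions to be the main, though mild, obstacle.

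Finally, I seek the radial solution of $(\rho u_z')'/\rho=\gamma$. This gives $u_z=\frac{\gamma}{4}\rho^2+c_1\log\rho+c_2$. Imposing $u_z'(R_1)=g$ gives $c_1=R_1g-\gamma R_1^2/2$, and imposing $u_z(R_2)=0$ gives the form $\frac{\gamma}{4}(\rho^2-R_2^2-2R_1^2\log(\rho/R_2))+R_1g\log(\rho/R_2)$. Since $(\omega_{\alpha,\beta})_\theta=\beta(\omega_{0,1})_\theta$, we have $R_1g=-\beta R_1(\omega_{0,1})_\theta(R_1)$, which matches \eqref{tcg-vorti}. By uniqueness this is the only solution, and the theorem follows.
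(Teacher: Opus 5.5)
Your proposal is correct and follows essentially the same route as the paper. You obtain $u_\rho\equiv0$ from incompressibility and impermeability, then the alternative \eqref{either} with case (b) reduced to case (a), then $u_\theta=\alpha\U^C$ from the Robin condition and $p=\alpha^2\P^C+\gamma z$. You then apply Lax--Milgram on the space $V_*$ of functions vanishing only at $\rho=R_2$, with the Neumann datum entering the functional as $\beta R_1(\omega_{0,1})_\theta(R_1)\int_0^{2\pi}v(R_1,\theta)\,d\theta$, and conclude with the explicit radial solution. Your computation $u_\theta/\rho+u_\theta'=2a$ and your coercivity argument via the one-dimensional Poincaré inequality from the Dirichlet condition at $R_2$ supply details that the paper only asserts.
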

	\begin{proof}
		The proof has some common parts with the proof of Theorem \ref{theo-tcg}.
		Let $u$ be a partially-invariant solution to \eqref{ns-vorti}.  \\
		\underline{Step 1:} Combining the divergence-free property with the boundary conditions for $u_\rho$ in \eqref{bc-cylindrical} yields $u_\rho(\rho, \theta, z)\equiv 0 $, see \eqref{incom-cylin}.
		Then, after writing the equations in \eqref{SNSEcyl} in the class of partially-invariant solutions with $u_\rho\equiv 0$, see \eqref{uno}-\eqref{tres},  we obtain
		\begin{equation}\label{sinB-2}
			\rho u_z(\rho,\theta)\, \frac{\partial u_{\theta}}{\partial z}(\rho,z)=\rho\frac{\partial^2 u_{\theta}}{\partial\rho^2}(\rho,z) +
			\frac{\partial u_{\theta}}{\partial\rho}(\rho,z)+\rho\frac{\partial^2u_{\theta}}{\partial z^2}(\rho,z)-\frac{u_\theta(\rho,z)}{\rho}
		\end{equation}for all $(\rho, \theta, z)\in(R_1,R_2)\times (0, 2\pi)\times \mathbb{R}$,
		which in turn leads to the alternative \eqref{either}.
		
		\underline{Step 2:} we prove that, necessarily, $(u,p)=(\U_{\alpha,\beta,\gamma}, \P_{\alpha,\gamma})$ given by \eqref{tcg-vorti} as $\gamma$ varies in  $\mathbb{R}$. To this end, we distinguish two cases.\par
		(a) If the first alternative in \eqref{either} holds, then \eqref{sinB-2} becomes
		$$
		\rho^2\frac{d^2 u_{\theta}}{d\rho^2}(\rho)+\rho\frac{du_{\theta}}{d\rho}(\rho)-u_\theta(\rho)=0\quad\text{in}\quad (R_1,R_2)
		$$
		which, complemented with the boundary conditions \eqref{bc-cylindrical}, gives again $u_\theta(\rho)= \alpha\U^C(\rho)e_\theta$ with $\U^C$ as in \eqref{tc1}.
		Injecting this into \eqref{uno} and arguing similarly to the proof of Theorem \ref{theo-tcg}, we infer that there exists $\gamma\in\R$ such that
		\begin{equation}\label{sepa-pres2}
			p(\rho, \theta, z)=\alpha^2\P^C(\rho)+\gamma z
		\end{equation}with $\mathcal{P}^C$ as in \eqref{tc1}.
		Recalling \eqref{utheta}, then \eqref{tres} reduces to
		\begin{equation}\label{eq-uz-cyl}
			\frac{\partial^2 u_z}{\partial\rho^2}(\rho,\theta)+\frac{1}{\rho}\frac{\partial u_z}{\partial\rho}(\rho,\theta)+
			\frac{1}{\rho^2}\frac{\partial^2 u_z}{\partial\theta^2}(\rho,\theta)+
			\frac{\alpha R_1 }{R_2^2 -R_1^2} \left(1-\frac{R_2^2}{\rho^2}\right)\, \frac{\partial u_z}{\partial\theta}(\rho,\theta)=\gamma .
		\end{equation}
		For $2\pi$-periodic functions $\theta\mapsto v(\rho, \theta)$, we restrict ourselves to the class
		\begin{equation}\label{V*}
			V_*=\left\{v\in H^1((R_1,R_2)\times [0,2\pi)) \ | \ v(R_2,\theta)=0  \text{ for $\theta\in[0,2\pi)$}\right\},
		\end{equation}which is a closed subspace of $H^1((R_1,R_2)\times [0,2\pi))$. We claim that \eqref{eq-uz-cyl} admits a unique solution in the space $V_*$. Indeed, after multiplying \eqref{eq-uz-cyl} by $-\rho$, its weak formulation reads
		\begin{equation}\label{var-form2}
			B(u_z, v)= F_*(v) \ \text{for any} \ v\in V_*,
		\end{equation}
		where $B(u_z,v)$ is the bilinear form \eqref{B} on $V_*$ and $$F_*(v)=\beta R_1(\omega_{0,1})_\theta(R_1)\int_{0}^{2\pi}v(R_1,\theta)d\theta -\gamma\int_{(R_1,R_2)\times (0,2\pi)} \rho v(\rho, \theta) d\rho d\theta$$ is a linear functional on $V$. Thanks to the Hölder inequality and the embedding $H^1 (R_1,R_2)\subset C^0([R_1,R_2])$, we have that $|F(v)|\leq C_1\|v\|_{V_*}$ and,
		since $\rho\in [R_1,R_2]$, that $$|B(u_z, v)|\leq C_2 \|u_z\|_{V_*}\|v\|_{V_*}$$ for some constants $C_1=C_1(R_1,R_2,\beta, \gamma)$ and $C_2=C_2(R_1,R_2, \alpha)$. By arguing
        as in the proof of Theorem \ref{theo-tcg}, we find that
	$B$ is coercive (or $V_*$-elliptic). Then, for any $(\alpha, \beta,\gamma)\in \mathbb{R}^3$, the Lax-Milgram Theorem yields the existence and uniqueness of the solution to \eqref{eq-uz-cyl} in $V_*$. Seeking the solution $u_z$ as a function depending only on $\rho$, we find the system
	\begin{equation*}
		\begin{cases}
			\dfrac{d^2 {u_z}}{d\rho^2}(\rho) +\dfrac{1}{\rho}\dfrac{d{u_z}}{d\rho}(\rho) =\gamma \quad \text{in} \quad (R_1,R_2),\\[7pt]
			\dfrac{d u_z}{d\rho}(R_1)=-(\omega_{\alpha,\beta})_\theta(R_1), \quad   {u}_{z}(R_2)=0,
		\end{cases}
	\end{equation*}whose solution has the explicit expression
	\begin{equation}\label{uz-explicit-new}
		{u}_{z}(\rho)=  -\beta R_1(\omega_{0,1})_\theta(R_1)\log(\rho/R_2) + \frac{\gamma}{4}\left( \rho^2-R_2^2 -2R_1^2\log(\rho/R_2)\right).
	\end{equation}
	Thus, by uniqueness, we infer that \eqref{uz-explicit-new} is the unique solution to \eqref{eq-uz-cyl}.\par

	(b) The second alternative in \eqref{either} can be handled as in the proof of Theorem \ref{theo-tcg}, since it does not depend on the new boundary conditions \eqref{bc-cylindrical}.
\end{proof}

\begin{remark}
The components $\U_{0,0,\gamma}$ and 
$\U_{0,\beta,0}$ of $\U_{\alpha,\beta,\gamma}$ are usually derived after imposing, respectively, a constant pressure gradient along the $z$-axis and a translational motion of the cylinder, see \cite{Joseph}. As for $\U_{0,\beta}$ in the Dirichlet case, here $\U_{0,0,\gamma}$ is obtained through the constraint of partial invariance.  The new component $\U_{0,\beta,0}$ with respect to the Dirichlet case appears due to the vorticity conditions \eqref{ns-vorti}, which generates a sliding effect on the velocity at $\Gamma_1$, equivalent to its vertical translation.
Finally, note that $\alpha$ and $\beta$ represent the size of the boundary data and $\gamma$ can be seen as the mass density.
\end{remark}

If we assume that the inner cylinder $\Gamma_1$ is still, the counterpart of \eqref{ns-vorti} corresponding to this configuration reads
\begin{equation}\label{ns-vorti-outer}
	\mbox{\eqref{SNSEcyl}}\, ,\qquad u =0 \quad \text{on}\quad \Gamma_1\,, \qquad u\cdot \nu =0, \quad (\nabla \times u)\times \nu = \omega_{\alpha,\beta}\times \nu \quad \text{on}\quad \Gamma_2\, .
\end{equation}

With some minor changes in the proof, instead of Theorem \ref{theo-tcg-vorti} we obtain:

\begin{theorem}\label{theo-tcg-vorti-outer}
	Let $({ \widetilde\U^C},{\widetilde\P^C})$ be as in \eqref{tc2}. Then the spiral Poiseuille-Couette flows
	\begin{eqnarray}
			& \widetilde\U_{\alpha,\beta,\gamma}(\rho) := \alpha\widetilde\U^C(\rho)e_\theta+  \left[ \frac{\gamma}{4}\left(\rho^2 - R_1^2 -2R_2^2 \log (\rho/R_1)\right)  -\beta R_2(\omega_{0,1})_\theta(R_2)\log (\rho/R_1)\right] e_z,  &  \notag \\[5pt]
			 &\widetilde\P_{\alpha,\gamma}(\rho, z) := \alpha^2\widetilde\P^C(\rho)+ \gamma z \qquad \text{for} \quad  (\rho, z)\in [R_1,R_2]\times \mathbb{R}\, & 
            \label{tcg-vorti-outer} 
	\end{eqnarray}
	are the only partially-invariant solutions  to \eqref{ns-vorti-outer} as $\gamma$ varies in $\mathbb{R}$.
\end{theorem}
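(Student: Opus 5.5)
The plan is to mirror the proof of Theorem \ref{theo-tcg-vorti}, exchanging the roles of $\Gamma_1$ and $\Gamma_2$. Write the boundary conditions \eqref{ns-vorti-outer} in cylindrical coordinates as in \eqref{bc-cylindrical}. On $\Gamma_1$ they become $u_\rho=u_\theta=u_z=0$. On $\Gamma_2$, where $\nu=e_\rho$ and hence all signs flip, they become $u_\rho(R_2,\cdot)=0$, the Robin condition $\frac{1}{R_2}u_\theta+\partial_\rho u_\theta=(\omega_{\alpha,\beta})_z(R_2)$, and the Neumann condition $\partial_\rho u_z(R_2)=-(\omega_{\alpha,\beta})_\theta(R_2)$. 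Here $\omega_{\alpha,\beta}$ should now be the vorticity of $\widetilde\U_{\alpha,\beta}$ in \eqref{tcg2}. Its $z$-component is $\frac{1}{\rho}(\rho\widetilde\U^C)'=\frac{2\alpha R_2}{R_2^2-R_1^2}$, and its $\theta$-component coincides with that in \eqref{vorticity-tcg}, since $\U^P$ is shared.

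\textbf{Step 1.} Incompressibility gives $u_\rho=A(\theta,z)/\rho$, and $u_\rho=0$ on either cylinder forces $u_\rho\equiv0$. Equations \eqref{uno}--\eqref{tres} then hold, and the $\theta$-independence argument behind \eqref{sinB} yields the alternative \eqref{either}. Case (b) of that alternative reduces to case (a) exactly as in Theorem \ref{theo-tcg}: the square-root argument never uses boundary conditions.

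\textbf{Step 2 (case (a)).} Now $u_\theta(\rho)=c_1\rho+c_2/\rho$ solves the Euler equation. Using $(\rho u_\theta)'/\rho=2c_1$, the Robin condition at $R_2$ gives $2c_1=\frac{2\alpha R_2}{R_2^2-R_1^2}$. The condition $u_\theta(R_1)=0$ then gives $c_2=-c_1R_1^2$, so $u_\theta=\alpha\widetilde\U^C$. Substituting into \eqref{uno} and separating variables in \eqref{tres} as before gives $p=\alpha^2\widetilde\P^C(\rho)+\gamma z$.

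The resulting equation for $u_z(\rho,\theta)$ is the analogue of \eqref{eq-uz-cyl} with drift coefficient $\widetilde w_\alpha=\alpha\widetilde\U^C/\rho$. We pose it on the space of $H^1$ functions vanishing at $\rho=R_1$, with the Neumann datum entering the linear functional through the boundary term $\beta R_2(\omega_{0,1})_\theta(R_2)\int_0^{2\pi}v(R_2,\theta)\,d\theta$. The sign of this term must be checked after multiplying by $-\rho$ and integrating by parts.

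Coercivity holds as before, because the drift term integrates to zero by $\theta$-periodicity. For the norm control we use the Poincaré inequality in $\rho$, which applies since $v$ vanishes at $R_1$. Lax--Milgram then yields uniqueness.

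It remains to exhibit a $\theta$-independent solution of
$$u_z''+\rho^{-1}u_z'=\gamma,\qquad u_z(R_1)=0,\qquad u_z'(R_2)=-\beta(\omega_{0,1})_\theta(R_2).$$
The general solution is $\frac{\gamma}{4}\rho^2+a\log\rho+b$. Matching the boundary conditions gives the combination $\frac{\gamma}{4}(\rho^2-R_1^2-2R_2^2\log(\rho/R_1))$, which has zero derivative at $R_2$. The Neumann datum is carried by $-\beta R_2(\omega_{0,1})_\theta(R_2)\log(\rho/R_1)$. This is \eqref{tcg-vorti-outer}, and uniqueness in the weak space concludes the proof.

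The main obstacle I expect is bookkeeping rather than analysis. We must fix the sign conventions of $(\nabla\times u)\times\nu$ with the outward normal $\nu=e_\rho$ on $\Gamma_2$, and confirm that the Robin condition there recovers exactly $\alpha\widetilde\U^C$ and not a different multiple. We must also confirm that the boundary term in the weak form has the sign matching the stated formula. Everything else transfers verbatim from the proof of Theorem \ref{theo-tcg-vorti}.
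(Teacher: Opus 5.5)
Your proposal is correct and follows the paper's route: the paper obtains this theorem by rerunning the proof of Theorem~\ref{theo-tcg-vorti} with $\Gamma_1$ and $\Gamma_2$ exchanged, reading $\omega_{\alpha,\beta}$ (as you rightly do) as the vorticity of $\widetilde\U_{\alpha,\beta}$, whose $z$-component $2\alpha R_2/(R_2^2-R_1^2)$ is what makes the Robin condition return exactly $\alpha\widetilde\U^C$. On the bookkeeping you flag: $\nu$ appears on both sides of $(\nabla\times u)\times\nu=\omega_{\alpha,\beta}\times\nu$, so the conditions on $\Gamma_2$ have exactly the form \eqref{bc-cylindrical} with no sign change, and the boundary term in the linear functional comes out as $-\beta R_2(\omega_{0,1})_\theta(R_2)\int_0^{2\pi}v(R_2,\theta)\,d\theta$ (opposite sign to the $\Gamma_1$ case), which does not affect Lax--Milgram and is consistent with the stated $u_z$.
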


\subsection{Stability.}\label{unrestrictvorticity}
 
We discuss here the smallness conditions under which suitable perturbations of the spiral Poiseuille-Couette flows \eqref{tcg-vorti} and \eqref{tcg-vorti-outer} are admissible.
To obtain the counterparts of Theorems \ref{uniqueness} and \ref{uniqueness2} for the new boundary conditions, we need to show the existence of strictly positive curl-Poincaré constants, that is, we seek inequalities such as 
\begin{equation}\label{eq:genericPoincarecurl}
C \norm{v}^2_{L^2(\Omega)}\leq \norm{\nabla\times v}_{L^2(\Omega)}^2,    
\end{equation}
for some constant $C>0$ and any $v$ in some subspace of $H^1(\Omega)$. However, the next remark shows that it is not clear whether this is always possible.

\begin{remark}\label{contro}
	For $0<R_1<R_2$ and $K>0$, consider the 3D bounded cylindrical annulus
	$$
	\Omega=\left\{(x,y,z)\in\R^3;\, R_1^2<x^2+y^2<R_2^2,\, |z|<K\right\}\, .
	$$
	Consider the vector field
	$$
	u(x,y,z)=\frac{y}{x^2+y^2}e_1-\frac{x}{x^2+y^2}e_2+0e_3\, .
	$$
	Then $u\in C^\infty(\overline{\Omega})$ and
	$$
	\nabla\cdot u=0\mbox{ in }\Omega\, ,\qquad\nabla\times u=0\mbox{ in }\Omega\, ,\qquad u\cdot\nu=0\mbox{ on }\partial\Omega\, .
	$$
Therefore, under the mere impermeability boundary conditions, there are non-trivial divergence-free and curl-free vector fields, even in bounded domains. This implies that the constant $C$ in \eqref{eq:genericPoincarecurl} vanishes.
\end{remark}

Hence, on the one hand, in our setting one can expect that the curl-Poincaré constants $C$ in \eqref{eq:genericPoincarecurl} might vanish because the domain is unbounded and not simply-connected. On the other hand, we stress that the cases with still inner or outer cylinder will be significantly different. Since the former is simpler, we study it first.

\smallskip

\underline{\emph{Still inner cylinder}}. In this case, the space of perturbations of $\widetilde{\mathcal{U}}_{\alpha,\beta,\gamma}$ maintaining the boundary conditions in \eqref{ns-vorti-outer} reads
\begin{equation}\label{eq:underlineVtildedef} 
	\underline{\widetilde V}:= \left\{v\in H^1(\Omega), \nabla\cdot v=0 \text{ in } \Omega,  v=0 \text{ on } \Gamma_1, v\cdot \nu=0  \text{ on } \Gamma_2\right\},
\end{equation}and we prove:
\begin{lemma}\label{lem:poincarecurlouter}
	Let $\underline{\widetilde V}(\Omega)$ be the space defined in \eqref{eq:underlineVtildedef}. Then,
	\begin{equation}\label{eq:poincarecurl}
		\widetilde{\underline{\Lambda}}(\Omega)=\inf_{v\in \widetilde{\underline{V}}\setminus\{ 0\} }\frac{\norm{\nabla\times v}_{L^2(\Omega)}^2}{\norm{v}_{L^2(\Omega)}^2}\geq \inf_{v\in \widetilde{\underline{V}}\setminus\{ 0\} }\frac{\norm{\nabla v}_{L^2(\Omega)}^2}{\norm{v}_{L^2(\Omega)}^2}>0.
	\end{equation}
\end{lemma}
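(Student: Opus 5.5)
The plan is to prove the two inequalities in \eqref{eq:poincarecurl} separately. The first one, $\|\nabla\times v\|_{L^2}\ge\|\nabla v\|_{L^2}$ on $\underline{\widetilde V}$, should come from the pointwise identity $|\nabla v|^2-|\nabla\times v|^2=\partial_i v_j\,\partial_j v_i$. Integrating it by parts turns the difference into a boundary integral whose sign is fixed by the curvature of the convex outer cylinder $\Gamma_2$. The positivity of the right-hand infimum will follow from a one-dimensional radial Poincaré inequality that uses only $v=0$ on $\Gamma_1$, in the spirit of the proof of \eqref{boundPoincare}$_2$.

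\emph{Step 1: the integral identity.} For smooth $v$ we have
\[
\partial_i v_j\,\partial_j v_i=\nabla\cdot\big((v\cdot\nabla)v\big)-v\cdot\nabla(\nabla\cdot v).
\]
Since $\nabla\cdot\big(v\,(\nabla\cdot v)\big)=v\cdot\nabla(\nabla\cdot v)+(\nabla\cdot v)^2$, integrating over $\Omega$ gives
\[
\int_\Omega|\nabla v|^2=\int_\Omega|\nabla\times v|^2+\int_\Omega(\nabla\cdot v)^2+\int_{\partial\Omega}\Big[\big((v\cdot\nabla)v\big)\cdot\nu-(\nabla\cdot v)(v\cdot\nu)\Big].
\]
I will first prove this for smooth $v$ with compact support in $z$, vanishing on $\Gamma_1$ and with $v\cdot\nu=0$ on $\Gamma_2$. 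I will not impose the divergence-free condition at this stage, so that a cutoff in $z$ is harmless. I will then extend it by density to all of $\underline{\widetilde V}$. This density step is the point I expect to need the most care, since $\Omega$ is unbounded. The boundary integrand is quadratic in $v$ and its tangential derivatives, so the trace theorem gives continuity in $H^1$. Alternatively, the boundary term can be rewritten as $-\frac1{R_2}\int_{\Gamma_2}v_\theta^2$, which is plainly continuous in $H^1$ by the trace theorem.

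\emph{Step 2: sign of the boundary term.} On $\Gamma_1$ the integrand vanishes because $v=0$. On $\Gamma_2$ we have $\nu=e_\rho$ and $v\cdot\nu=v_\rho=0$, so the second term drops out. In cylindrical coordinates,
\[
[(v\cdot\nabla)v]_\rho=v\cdot\nabla v_\rho-\frac{v_\theta^2}{\rho}.
\]
Since $v_\rho\equiv0$ on $\Gamma_2$ and $v$ is tangent to $\Gamma_2$ there, the derivative $v\cdot\nabla v_\rho$ is tangential and vanishes. Hence the boundary contribution equals $-\frac1{R_2}\int_{\Gamma_2}v_\theta^2\le0$. For $v\in\underline{\widetilde V}$ we also have $\nabla\cdot v=0$, and therefore
\[
\|\nabla v\|^2_{L^2(\Omega)}=\|\nabla\times v\|^2_{L^2(\Omega)}-\frac1{R_2}\int_{\Gamma_2}v_\theta^2\le\|\nabla\times v\|^2_{L^2(\Omega)}.
\]
This yields the first inequality in \eqref{eq:poincarecurl}. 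The key point is that only the convex boundary carries the slip condition; on the concave boundary $\Gamma_1$ the corresponding term would have the wrong sign, which is why this case is the simpler one.

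\emph{Step 3: positivity.} For smooth $v$, fix $(\theta,z)$ and write $v(\rho,\theta,z)=\int_{R_1}^{\rho}\partial_\rho v(r,\theta,z)\,dr$, using $v=0$ on $\Gamma_1$. By Hölder's inequality with weights $1/r$ and $r$,
\[
|v(\rho,\theta,z)|^2\le\log(R_2/R_1)\int_{R_1}^{R_2}|\partial_\rho v|^2\,r\,dr.
\]
Multiplying by $\rho$ and integrating over $(R_1,R_2)\times(0,2\pi)\times\mathbb{R}$ gives
\[
\|v\|^2_{L^2(\Omega)}\le\frac{(R_2^2-R_1^2)\log(R_2/R_1)}{2}\,\|\nabla v\|^2_{L^2(\Omega)}.
\]
This extends to the whole space by density, and so the right-hand infimum is at least $2/\big((R_2^2-R_1^2)\log(R_2/R_1)\big)>0$.
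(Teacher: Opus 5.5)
Your argument is correct, and its core mechanism is the paper's: an identity linking $\norm{\nabla v}_{L^2}^2$ and $\norm{\nabla\times v}_{L^2}^2$ through a curvature term on $\Gamma_2$, whose sign is favourable because the outer cylinder is convex. The execution differs, however.

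The paper restricts to fields compactly supported in $z$, builds the auxiliary bounded $C^3$ domains $\Omega^K$ with rotated quartic caps, and quotes the Girault--Raviart/Grisvard identity for divergence-free fields. It then extends the inequality by density. You instead integrate $\partial_i v_j\,\partial_j v_i$ by parts directly in the unbounded $\Omega$ and compute the boundary term in cylindrical coordinates. This gives
\[
\norm{\nabla v}_{L^2(\Omega)}^2=\norm{\nabla\times v}_{L^2(\Omega)}^2+\norm{\nabla\cdot v}_{L^2(\Omega)}^2-\frac{1}{R_2}\int_{\Gamma_2}v_\theta^2\,ds
\]
for every $H^1$ field vanishing on $\Gamma_1$ and tangent to $\Gamma_2$.

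This buys three things. First, no artificial caps are needed. Second, the density step is clean: keeping $\norm{\nabla\cdot v}^2$ lets you approximate by plain cutoffs in $z$. The paper's inequality is proved only for compactly supported \emph{divergence-free} fields, so its ``by density'' tacitly needs solenoidal compactly supported approximants, which a cutoff alone does not produce. Third, your Step 3 gives the explicit lower bound $2/\big((R_2^2-R_1^2)\log(R_2/R_1)\big)$ for the last infimum, whose positivity the paper asserts without proof.

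The price is that your computation relies on the specific cylindrical geometry, while the cited lemma covers arbitrary smooth bounded domains. In exchange, the same formula with $\nu=-e_\rho$ on $\Gamma_1$ immediately yields the term $+\frac{1}{R_1}\int_{\Gamma_1}v_\theta^2$ used in Lemma~\ref{lem:poincarecurlinner}.

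The one step you leave sketchy, the density of smooth fields with the mixed boundary conditions, is routine. Since $e_\rho,e_\theta$ are smooth on $\overline{\Omega}$, it reduces to approximating $v_\rho\in H^1_0(\Omega)$ and $v_\theta,v_z\in H^1(\Omega)$ with zero trace on $\Gamma_1$, followed by a cutoff in $z$.
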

\begin{proof}
	Let $v\in \underline{\widetilde{V}}\setminus \{0\}$ with compact support in $z$. We choose $K>0$ so that $|z|>K$ implies $v=0.$
	Let $\Gamma^K$ be the part of the curve in the $(\rho,z)$-plane defined by the equation
	$$(z-K)^4+\frac{16}{(R_2-R_1)^4}\left(\rho-\frac{R_1+R_2}{2}\right)^4=1,$$ such that $\text{supp } \Gamma^K\subset \{\rho\in [R_1,R_2],z\in [K,K+1]\}$. Let $-\Gamma^K$ be the symmetric curve to $\Gamma^K$ with respect to the $\rho$ axis. Then, the curve $$\left\{\rho=R_1,z\in[-K,K]\right\}\cup\left\{\rho=R_2,z\in[-K,K]\right\}\cup \Gamma^K\cup -\Gamma^K $$ is the $C^3$-boundary of a 2D-domain $\Omega_{2D}^{K}$. Finally, we define $\Omega^K$ as the 3D-rotation of  $\Omega^K_{2D}$ with respect to the $z$-axis, namely
	\begin{equation}\label{eq:Omega^K}
		\Omega^K=\left\{(x,y,z):(\sqrt{x^2+y^2},z)\in \Omega^K_{2D}\right\}.
	\end{equation}
	The boundary of $\Omega^K$ is composed by three parts: $\Gamma_1^K$ contained in the inner cylinder $\Gamma_1$, $\Gamma_2^K$ contained in the outer cylinder $\Gamma_2$, and $\Gamma_3^K$ corresponding to the rotated auxiliary curves that join the inner and outer cylinders.
	Then, $v$ belongs to
	$$\underline{\widetilde V}^K(\Omega^K):= \left\{v\in H^1(\Omega^K), \nabla\cdot v=0 \text{ in } \Omega^K,  v=0 \text{ on } \Gamma_1^K\cup \Gamma_3^K, v\cdot \nu=0  \text{ on } \Gamma_2^K\right\}.$$
	
	Since $\Omega^K$ is $C^3$ and bounded, we can apply \cite[Lemma I.3.8]{girault1986finite} to get
	\begin{equation}\label{eq:girault}
		\norm{\nabla v}^2_{L^2(\Omega^K)}+\int_{\Gamma^K_2} (\mathcal{R}v,v)ds=\norm{\nabla\times v}^2_{L^2(\Omega^K)},
	\end{equation}
	where we have also used that $v$ is divergence-free and $v=0$ on $\Gamma_1^K \cup \Gamma_3^K$. Here $\mathcal{R}$ denotes the curvature tensor on the tangent plane to $\Gamma_2^K$, which is non-negative definite when $\Omega^K$ is convex close to $\Gamma_2^K$; see \cite[Theorem 3.1.1.1]{grisvard1985elliptic} for a more general result implying this equality. Hence the second term in \eqref{eq:girault} is non-negative, and therefore
	$$\norm{\nabla v}^2_{L^2(\Omega^K)}\leq \norm{\nabla\times v}^2_{L^2(\Omega^K)}.    $$
	
	Note that $v\equiv 0$ in $\Omega\setminus \Omega_K$, so we also have
	$$\norm{\nabla v}^2_{L^2(\Omega)}\leq\norm{\nabla\times v}^2_{L^2(\Omega)}.$$
	
	This inequality is independent of $K$. By density, we deduce that the last inequality holds for any $v\in \widetilde{\underline{V}}$. It follows that \eqref{eq:poincarecurl} holds.
\end{proof}
Next, we introduce a constant measuring the magnitude of the flow $(\widetilde\U_{\alpha,\beta,\gamma},\widetilde\P_{\alpha,\gamma})$, namely
\begin{equation}\label{eq:M-vorticity-outer}
	{M}_{\rm in}(\alpha,\beta,\gamma):= \max_{\rho\in[R_1,R_2]}\left[\alpha^2 \frac{R_1^4R_2^2}{(R_2^2 -R_1^2)^2\rho^4} +\left(\frac{1}{\rho}\big(\frac{\beta-\gamma}{4}R_2^2-\frac{\beta}{8}\frac{R_2^2-R_1^2}{\log(R_2/R_1)}\big)+\rho\frac{\gamma}{4} \right)^2\right]^{\frac{1}{2}}\!\!.
\end{equation}
We observe that ${M}_{\rm in}(0,0,0)=0$, and ${M}_{\rm in}(\alpha,0,0)=|\alpha|\frac{R_2}{R_2^2-R_1^2}$ for any $\alpha\in\mathbb{R}$. We prove the stability result:

\begin{theorem}\label{theo:unrestuniquevortouter}
	Fix $\alpha,\beta,\gamma\in\mathbb{R}$. Let $\underline{\widetilde{\Lambda}}(\Omega)$ and ${M}_{\rm in}(\alpha,\beta,\gamma)$ be as in \eqref{eq:poincarecurl} and \eqref{eq:M-vorticity-outer}, respectively. Assume that $(v,q)\in \underline{\widetilde V}(\Omega)\times L^2(\Omega)$ is such that $(\widetilde\U_{\alpha,\beta,\gamma}+v,\widetilde\P_{\alpha,\gamma}+q)$ solves \eqref{ns-vorti-outer}. If
	\begin{equation}\label{uniqueness-cond-vort-outer}
		{M}_{\rm in}(\alpha,\beta,\gamma)<\underline{\widetilde{\Lambda}}(\Omega),
	\end{equation}
	then $(v,q)\equiv (0,0)$ in $\Omega$.
\end{theorem}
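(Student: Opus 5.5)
We follow the energy method of Theorem \ref{uniqueness}, with the Dirichlet integral replaced by the curl integral. Since $(\widetilde\U_{\alpha,\beta,\gamma},\widetilde\P_{\alpha,\gamma})$ solves \eqref{ns-vorti-outer}, the perturbation $(v,q)$ satisfies
\[
-\Delta v+v\cdot\nabla v+\widetilde\U_{\alpha,\beta,\gamma}\cdot\nabla v+v\cdot\nabla\widetilde\U_{\alpha,\beta,\gamma}+\nabla q=0,\qquad \nabla\cdot v=0 \quad\text{in }\Omega .
\]
It has homogeneous boundary conditions: $v=0$ on $\Gamma_1$, and $v\cdot\nu=0$, $(\nabla\times v)\times\nu=0$ on $\Gamma_2$, because the inhomogeneous vorticity data are exactly those of $\widetilde\U_{\alpha,\beta,\gamma}$.

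\textbf{Step 1: energy identity.} Write $-\Delta v=\nabla\times(\nabla\times v)$ and test with $v$. First for $v$ compactly supported in $z$, then by density/cutoff using $v\in H^1(\Omega)$ and the regularity of classical solutions, integration by parts gives
\[
\int_\Omega\nabla\times(\nabla\times v)\cdot v=\int_\Omega|\nabla\times v|^2+\int_{\partial\Omega}\big((\nabla\times v)\times\nu\big)\cdot v .
\]
The boundary term vanishes on $\Gamma_1$ since $v=0$ there, and on $\Gamma_2$ since $(\nabla\times v)\times\nu=0$.

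The remaining terms behave as in the Dirichlet case:
\begin{itemize}
\item the pressure term $\int\nabla q\cdot v$ vanishes because $\nabla\cdot v=0$ and $v\cdot\nu=0$ on $\partial\Omega$;
\item the convective terms $\int(v\cdot\nabla v)\cdot v$ and $\int(\widetilde\U\cdot\nabla v)\cdot v$ vanish for the same reason, since $\widetilde\U$ is divergence-free and tangent to the boundary.
\end{itemize}
Hence
\[
\int_\Omega|\nabla\times v|^2=-\int_\Omega(v\cdot\nabla\widetilde\U_{\alpha,\beta,\gamma})\cdot v .
\]
Justifying the cutoff on the unbounded domain requires $v,\nabla v\in L^2$ and some control of $q$ and of $\widetilde\U$, which is bounded with bounded derivatives on $[R_1,R_2]$. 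This is routine but must be stated.

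\textbf{Step 2: pointwise bound.} In cylindrical coordinates the quadratic form is $v^T\widetilde A(\rho)v$, with $\widetilde A$ as in \eqref{Arho}. Its entries are
\[
\frac{\alpha}{2}\Big(\frac{\widetilde\U^C}{\rho}-(\widetilde\U^C)'\Big)=-\alpha\frac{R_1^2R_2}{(R_2^2-R_1^2)\rho^2}
\]
and $-\tfrac12$ times the derivative of the $e_z$-component in \eqref{tcg-vorti-outer}. This derivative is $\frac{\gamma}{4}(2\rho-2R_2^2/\rho)-\beta R_2(\omega_{0,1})_\theta(R_2)/\rho$. Inserting $(\omega_{0,1})_\theta(R_2)=\frac14\big(\frac{R_2^2-R_1^2}{\log(R_2/R_1)R_2}-2R_2\big)$ from \eqref{vorticity-tcg}, half of it becomes
\[
\frac1\rho\Big(\frac{\beta-\gamma}{4}R_2^2-\frac{\beta}{8}\frac{R_2^2-R_1^2}{\log(R_2/R_1)}\Big)+\frac{\gamma}{4}\rho .
\]
This is the bracket in \eqref{eq:M-vorticity-outer}.

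The matrix has zero diagonal and only the $(1,2)$ and $(1,3)$ entries nonzero, so its maximal eigenvalue is the Euclidean norm of those two entries. It is therefore bounded by ${M}_{\rm in}(\alpha,\beta,\gamma)$, which is defined as the maximum over $\rho$; this avoids any monotonicity analysis. Consequently
\[
\Big|\int_\Omega(v\cdot\nabla\widetilde\U)\cdot v\Big|\le {M}_{\rm in}\int_\Omega|v|^2 .
\]

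\textbf{Step 3: conclusion.} Since $v\in\underline{\widetilde V}$, Lemma \ref{lem:poincarecurlouter} gives $\|v\|^2_{L^2}\le\underline{\widetilde\Lambda}^{-1}\|\nabla\times v\|^2_{L^2}$, so
\[
\|\nabla\times v\|^2\le ({M}_{\rm in}/\underline{\widetilde\Lambda})\|\nabla\times v\|^2 .
\]
Under \eqref{uniqueness-cond-vort-outer} this forces $\nabla\times v=0$, and then the lemma (with $\underline{\widetilde\Lambda}>0$) gives $v=0$. The equation then reduces to $\nabla q=0$, so $q$ is constant, and $q\in L^2(\Omega)$ with $|\Omega|=\infty$ forces $q=0$.

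The main obstacle is Step 1 on the unbounded domain: the curl integration by parts with mixed boundary conditions and the vanishing of the boundary and cutoff terms. I would handle it by approximating with the truncated domains $\Omega^K$ from the proof of Lemma \ref{lem:poincarecurlouter}, or with cutoffs $\chi(z/K)$, showing the error terms tend to zero as $K\to\infty$.
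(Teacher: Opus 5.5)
Your proposal is correct and follows essentially the same route as the paper. Both arguments use the curl form of the energy identity, with the boundary term on $\Gamma_2$ killed by the homogeneous vorticity condition, then bound the matrix $\widetilde A(\rho)$ by its largest eigenvalue, which is at most $M_{\rm in}(\alpha,\beta,\gamma)$, and close with the curl-Poincaré inequality of Lemma \ref{lem:poincarecurlouter}. Your explicit check of the matrix entries and your cutoff justification of the integrations by parts on the unbounded domain are details the paper leaves implicit.
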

\begin{proof}
	We proceed as in the proof of Theorem \ref{uniqueness}.
	Since $(\widetilde\U_{\alpha,\beta,\gamma},\widetilde\P_{\alpha,\gamma})$ solves \eqref{ns-vorti-outer}, the perturbation $(v,q)$ satisfies
	
	\begin{equation}\label{perturbNSoutervort}\begin{cases}
			-\Delta v + v \cdot \nabla v +\widetilde {U}_{\alpha,\beta,\gamma} \cdot \nabla v  + v \cdot \nabla \widetilde {U}_{\alpha,\beta,\gamma} + \nabla q =0, \quad \nabla\cdot v =0 \quad \text{in} \quad \Omega, \\[5pt]
			v = 0\quad \text{on}\quad \Gamma_1\, ,\qquad v\cdot \nu =0,\quad
			(\nabla\times v)\times \nu=0 \quad
			\text{on}\quad \Gamma_2\, ,
		\end{cases}
	\end{equation}and, as mentioned above, it is a classical solution to \eqref{perturbNSoutervort}.
	
	We recall that for any divergence-free $v$, it holds $-\Delta v=\nabla\times(\nabla \times v)$.
	Taking the $L^2(\Omega)$-scalar product of \eqref{perturbNSoutervort} with $v$, integrating by parts and using the divergence-free condition yields
	\begin{equation}\label{dirnorm-vort}
		\int_\Omega |\nabla\times v|^2  = \int_{\Gamma_2} ((\nabla\times v)\times \nu)\cdot v- \int_{\Omega}\left(v\cdot \nabla \widetilde{\U}_{\alpha,\beta,\gamma}\right) \cdot v=- \int_{\Omega}\left(v\cdot \nabla \widetilde{\U}_{\alpha,\beta,\gamma}\right) \cdot v.
	\end{equation}
	Using cylindrical coordinates,
	\begin{equation}\label{eq:nonlineartermenergy}
		\int_\Omega (	v\cdot \nabla \widetilde{\U}_{\alpha,\beta,\gamma} )\cdot v = \int_{(R_1,R_2)\times (0,2\pi)\times \mathbb{R}}\rho v^T\widetilde{A}(\rho)v
	\end{equation}
	with the symmetric matrix
	\begin{equation}\label{Arho2}
		\widetilde{A}(\rho)=\left( \begin{matrix}
			0& \tfrac{\alpha}{2}(\frac{ \widetilde{\U}^C}{\rho}-\frac{d  \widetilde{\U}^C}{d\rho})&	-\tfrac{1}{2}\big(\beta\frac{d\widetilde \U_{0,1,0}}{d\rho}+\gamma\frac{d\widetilde \U_{0,0,1}}{d\rho}\big)\\[5pt]
			\tfrac{\alpha}{2}(\frac{ \widetilde{\U}^C}{\rho}-\frac{d  \widetilde{\U}^C}{d\rho})&0&0\\[5pt]
			-\tfrac{1}{2}\big(\beta\frac{d\widetilde \U_{0,1,0}}{d\rho}+\gamma\frac{d\widetilde \U_{0,0,1}}{d\rho}\big)&0&0
		\end{matrix}\right).
	\end{equation}
	We know that there exists $\widetilde\Upsilon(\rho)\in \mathbb{R}$ such that, for any $\rho\in [R_1,R_2]$, the quadratic form $v^T \widetilde A(\rho) v $ is bounded from above by $\widetilde\Upsilon(\rho) |v|^2 $. The optimal $\widetilde\Upsilon(\rho)$ is the maximum eigenvalue of the matrix $\widetilde A(\rho)$, namely,
	\begin{equation*}\begin{aligned}
			0<	\widetilde\Upsilon(\rho)&= 
            \left[\alpha^2 \frac{R_1^4R_2^2}{(R_2^2 -R_1^2)^2\rho^4} +\left(\frac{1}{\rho}\bigg(\frac{\beta-\gamma}{4}R_2^2-\frac{\beta}{8}\frac{R_2^2-R_1^2}{\log(R_2/R_1)}\bigg)+\rho\frac{\gamma}{4} \right)^2\right]^{\frac{1}{2}}.\\[5pt]
		\end{aligned}
	\end{equation*}
	By \eqref{eq:M-vorticity-outer}, $\widetilde\Upsilon(\rho)\leq {M}_{\rm in}(\alpha,\beta,\gamma)$. Combining this bound with \eqref{dirnorm-vort} and \eqref{eq:nonlineartermenergy}, we deduce
	\begin{equation*}
		\int_\Omega |\nabla\times v|^2 \leq \int_{(R_1,R_2)\times (0,2\pi)\times \mathbb{R}} \widetilde{\Upsilon}(\rho)\rho |v|^2\leq {M}_{\rm in}(\alpha,\beta,\gamma)\int_\Omega |v|^2\leq \frac{{M}_{\rm in}(\alpha,\beta,\gamma)}{\underline{\widetilde{\Lambda}}(\Omega)}\int_\Omega |\nabla\times v|^2.
	\end{equation*}
	Hence, if \eqref{uniqueness-cond-vort-outer} holds, $\nabla\times v\equiv0$ in $\Omega$ and, since $\widetilde{\underline{\Lambda}}>0$ by Lemma \ref{lem:poincarecurlouter}, we infer $v\equiv0$ in $\Omega$. Then, one infers that also $q\equiv0$ in $\Omega$.\end{proof}

\underline{\emph{Still outer cylinder}}. 
In this case, the space of perturbations of ${\mathcal{U}}_{\alpha,\beta,\gamma}$ maintaining the boundary conditions in \eqref{ns-vorti} reads 
\begin{equation}\label{eq:underlineVdef} 
\underline{V}:= \left\{v\in H^1(\Omega), \nabla\cdot v=0 \text{ in } \Omega, v\cdot \nu=0 \text{ on } \Gamma_1, v=0 \text{ on } \Gamma_2\right\}.
\end{equation}
We emphasize that we cannot derive a strict inequality as in \eqref{eq:poincarecurl} since \eqref{eq:girault} becomes
\begin{equation*}
	\norm{\nabla v}^2_{L^2(\Omega^K)}+\int_{\Gamma^K_1} (\mathcal{R}v,v)ds=\norm{\nabla\times v}^2_{L^2(\Omega^K)},
\end{equation*}
and the boundary integral is negative due to the concavity of $\Omega^K$ close to $\Gamma_1^K$. This is where we see the analytical difference between imposing vorticity boundary conditions on the inner or outer cylinder. To overcome this challenge, we obtain the positivity of curl-Poincaré constants by imposing additional constraints.\par 
We prove it by adding the geometric constraint of thin annulii, namely, considering small ratios of the radii:
	\begin{lemma}\label{lem:poincarecurlinner}
		Let $\underline{ V}(\Omega)$ be the space defined in \eqref{eq:underlineVdef}. If $ {R_2}/{R_1}\in (1,e),$ then
		\begin{equation}\label{eq:poincarecurlinner}
			{\underline{\Lambda}}(\Omega)=\inf_{v\in {\underline{V}}\setminus\{ 0\} }\frac{\norm{\nabla\times v}_{L^2(\Omega)}^2}{\norm{v}_{L^2(\Omega)}^2}\geq \left(1-\log\left(R_2/R_1\right)\right)  \inf_{v\in {\underline{V}}\setminus\{ 0\} }\frac{\norm{\nabla v}_{L^2(\Omega)}^2}{\norm{v}_{L^2(\Omega)}^2}>0.
		\end{equation}
	\end{lemma}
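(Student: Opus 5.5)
The plan is to repeat the truncation argument of Lemma \ref{lem:poincarecurlouter}, keeping the boundary curvature term on $\Gamma_1$ instead of discarding it. That term now has the wrong sign, so I will control it by a one-dimensional trace estimate in the radial direction, which is where the factor $\log(R_2/R_1)$ comes from.

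\emph{Step 1 (reduction).} By density it suffices to take $v\in\underline V\setminus\{0\}$ smooth with compact support in $z$. For such $v$, build $\Omega^K$ exactly as in \eqref{eq:Omega^K}, with the roles of the cylinders exchanged. Then $v=0$ on $\Gamma_2^K\cup\Gamma_3^K$ and $v\cdot\nu=0$ on $\Gamma_1^K$. Applying \cite[Lemma I.3.8]{girault1986finite} (or \cite[Theorem 3.1.1.1]{grisvard1985elliptic}) together with $\nabla\cdot v=0$ gives
$$
\norm{\nabla v}^2_{L^2(\Omega)}+\int_{\Gamma_1}(\mathcal R v,v)\,ds=\norm{\nabla\times v}^2_{L^2(\Omega)} .
$$
I then compute the curvature tensor of the inner cylinder explicitly. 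Its only nonzero principal curvature is in the azimuthal direction, and it has the sign of a concave boundary. Since $v$ is tangent to $\Gamma_1$, this should give $(\mathcal R v,v)=-\frac{1}{R_1}v_\theta^2$. This is consistent with the friction term $\frac{2}{R_1}u_\theta e_\theta$ in \eqref{rel-bc}, and I would verify the sign directly from the smooth-field identity, since everything hinges on it. Hence
$$
\norm{\nabla\times v}^2_{L^2(\Omega)}=\norm{\nabla v}^2_{L^2(\Omega)}-\frac1{R_1}\int_{\Gamma_1}|v_\theta|^2\,ds .
$$

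\emph{Step 2 (trace bound; the key step).} Since $v=0$ on $\Gamma_2$, for fixed $(\theta,z)$ we have $v_\theta(R_1,\theta,z)=-\int_{R_1}^{R_2}\partial_\rho v_\theta(r,\theta,z)\,dr$. Cauchy--Schwarz with weights $1/r$ and $r$, as in the proof of \eqref{boundPoincare}$_2$, yields
$$
|v_\theta(R_1,\theta,z)|^2\le\log(R_2/R_1)\int_{R_1}^{R_2}|\partial_\rho v_\theta|^2\,r\,dr .
$$
Because $e_\theta$ does not depend on $\rho$, we have $\partial_\rho v_\theta=(\partial_\rho v)\cdot e_\theta$, so $|\partial_\rho v_\theta|\le|\partial_\rho v|\le|\nabla v|$ pointwise. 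Now integrate over $(\theta,z)$ using $ds=R_1\,d\theta\,dz$ on $\Gamma_1$. This gives
$$
\frac1{R_1}\int_{\Gamma_1}|v_\theta|^2\,ds\le\log(R_2/R_1)\norm{\nabla v}^2_{L^2(\Omega)},
$$
and therefore $\norm{\nabla\times v}^2\ge(1-\log(R_2/R_1))\norm{\nabla v}^2$. This bound is independent of $K$, so by density it extends to all of $\underline V$. The coefficient is positive exactly when $R_2/R_1<e$.

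\emph{Step 3 (positivity of the gradient Poincaré constant).} It remains to show that $\inf_{\underline V\setminus\{0\}}\norm{\nabla v}^2/\norm v^2>0$. Only the condition $v=0$ on $\Gamma_2$ is needed. Repeating the argument above at an arbitrary $\rho$, we get $|v(\rho,\theta,z)|^2\le\log(R_2/R_1)\int_{R_1}^{R_2}|\partial_\rho v|^2\,r\,dr$. Multiplying by $\rho$ and integrating over $\Omega$ gives $\norm v^2\le\frac{R_2^2-R_1^2}{2}\log(R_2/R_1)\norm{\nabla v}^2$, so the infimum is positive.

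The main obstacle is Step 1: correctly identifying the sign and size of the curvature term on the concave boundary, and justifying the Girault--Grisvard identity on the truncated domain, where the tangency condition holds only on $\Gamma_1^K$. Once $(\mathcal Rv,v)=-v_\theta^2/R_1$ is established, Steps 2 and 3 are elementary one-dimensional estimates.
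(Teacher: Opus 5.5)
Your proposal is correct and follows essentially the same route as the paper: you apply the Girault--Grisvard identity on the truncated domain $\Omega^K$, identify the concave curvature term as $-\frac{1}{R_1}\int_{\Gamma_1}|v_\theta|^2\,ds$, and control it by the one-sided radial Cauchy--Schwarz trace bound $\int|v_\theta(R_1,\theta,z)|^2\,d\theta\,dz\le\log(R_2/R_1)\norm{\nabla v}^2_{L^2(\Omega)}$, which uses only $v=0$ on $\Gamma_2$. Your Step 3 also proves that the gradient infimum is positive, with the explicit bound $\norm{v}^2_{L^2(\Omega)}\le\frac{R_2^2-R_1^2}{2}\log(R_2/R_1)\norm{\nabla v}^2_{L^2(\Omega)}$; the paper's proof leaves this part of the statement implicit.
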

    \begin{proof}
    Following the proof of Lemma \ref{lem:poincarecurlouter}, we arrive to the equation
\begin{equation}\label{eq:girault2}
	\norm{\nabla v}^2_{L^2(\Omega^K)}+\int_{\Gamma^K_1} (\mathcal{R}v,v)ds=\norm{\nabla\times v}^2_{L^2(\Omega^K)}.
\end{equation}
The second term is now negative, but we can take advantage of the cylindrical geometry we are working with. Indeed, the curvature tensor can be made explicit and, from its definition in \cite[Section 3.1.1.1]{grisvard1985elliptic}, we find that $$\int_{\Gamma_1^K}(\mathcal{R}v,v)ds= -\frac{1}{R_1}\int_{\Gamma_1^K}|v_\theta|^2 ds= -\int_{(0,2\pi)\times (-K,K)}|v_\theta(R_1, \theta, z)|^2 d\theta dz. $$ Due to the homogeneous boundary conditions on $\Gamma_2^K$, we argue as in the proof of Theorem \ref{uniqueness} and obtain that
	\begin{equation*}
-\int_{\Gamma_1^K}(\mathcal{R}v,v)ds=\int_{(0,2\pi)\times (-K,K)}|v_\theta(R_1, \theta, z)|^2 d\theta dz\leq \log\left(R_2/R_1\right)\|\nabla v\|^2_{L^2(\Omega^K)}
	\end{equation*}which, combined with  \eqref{eq:girault2}, implies that $$\left(1-\log\left(R_2/R_1\right)\right)\|\nabla v\|^2_{L^2(\Omega_K)}\leq \|\nabla \times v\|^2_{L^2(\Omega_K)},$$
    completing the proof of \eqref{eq:poincarecurlinner}.\end{proof}

    Without the thin annulus assumption, it is not clear whether   \eqref{eq:poincarecurlinner} continues to hold.
	Thanks to Lemma \ref{lem:poincarecurlinner}, the proof of Theorem \ref{theo:unrestuniquevortouter} can be mimicked to obtain an analogous stability result.
	Before stating it, we introduce a constant measuring the magnitude of the flow $(\U_{\alpha,\beta,\gamma},\P_{\alpha,\gamma})$:
	\begin{equation}\label{eq:M-vorticity}
		{M}_{\rm out}(\alpha,\beta,\gamma)\!:= \! \!\max_{\rho\in [R_1,R_2]}\left[\alpha^2 \frac{R_1^2R_2^4}{(R_2^2 -R_1^2)^2\rho^4} \!+\!\left(\frac{1}{\rho}
		\bigg(\frac{\beta-\gamma}{4}R_1^2
		-\frac{\beta}{8}\frac{R_2^2-R_1^2}{\log(R_2/R_1)}\bigg)
		+\rho\frac{\gamma}{4} \right)^2\right]^{\frac{1}{2}}\!\!.
	\end{equation}
	We observe that ${M}_{\rm out}(0,0,0)=0$, and ${M}_{\rm out}(\alpha,0,0)=|\alpha|\frac{R^2_2}{(R_2^2-R_1^2)R_1}$ for any $\alpha\in\mathbb{R}$. We prove:
	 \begin{theorem}\label{theo-uni-inner}
		Fix $\alpha,\beta,\gamma\in\mathbb{R}$ and take ${R_2}/{R_1}\in (1,e)$. Let $\underline{{\Lambda}}(\Omega)$ and ${M}_{\rm out}(\alpha,\beta,\gamma)$ be as in \eqref{eq:poincarecurlinner} and \eqref{eq:M-vorticity}, respectively. Assume that $(v,q)\in \underline{ V}(\Omega)\times L^2(\Omega)$ is such that $(\U_{\alpha,\beta,\gamma}+v,\P_{\alpha,\gamma}+q)$ solves \eqref{ns-vorti}. If
		\begin{equation}\label{uniqueness-cond-vort-outer2}
			{M}_{\rm out}(\alpha,\beta,\gamma)<\underline{{\Lambda}}(\Omega),
		\end{equation}
		then $(v,q)\equiv (0,0)$ in $\Omega$.
	\end{theorem}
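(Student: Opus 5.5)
The plan is to mimic the proof of Theorem \ref{theo:unrestuniquevortouter}, exchanging the roles of the two cylinders and replacing Lemma \ref{lem:poincarecurlouter} with Lemma \ref{lem:poincarecurlinner}.

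\emph{Step 1: perturbation system.} Since $(\U_{\alpha,\beta,\gamma},\P_{\alpha,\gamma})$ solves \eqref{ns-vorti} by Theorem \ref{theo-tcg-vorti}, subtracting the two problems shows that $(v,q)$ satisfies
\[
-\Delta v+v\cdot\nabla v+\U_{\alpha,\beta,\gamma}\cdot\nabla v+v\cdot\nabla\U_{\alpha,\beta,\gamma}+\nabla q=0,\qquad \nabla\cdot v=0\quad\text{in }\Omega,
\]
with boundary conditions $v\cdot\nu=0$ and $(\nabla\times v)\times\nu=0$ on $\Gamma_1$, and $v=0$ on $\Gamma_2$. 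The vorticity data cancel on $\Gamma_1$ because they coincide for $u$ and for $\U_{\alpha,\beta,\gamma}$. By regularity, $v$ is classical.

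\emph{Step 2: energy identity.} Write $-\Delta v=\nabla\times(\nabla\times v)$ and test with $v$. Integration by parts gives
\[
\int_\Omega|\nabla\times v|^2=\int_{\Gamma_1}\big((\nabla\times v)\times\nu\big)\cdot v-\int_\Omega (v\cdot\nabla\U_{\alpha,\beta,\gamma})\cdot v .
\]
The boundary term on $\Gamma_1$ vanishes by the vorticity condition, and the one on $\Gamma_2$ vanishes since $v=0$ there. The contributions of the pressure, of $v\cdot\nabla v$ and of $\U_{\alpha,\beta,\gamma}\cdot\nabla v$ vanish by incompressibility together with $v\cdot\nu=0$ on $\partial\Omega$ and $\U_{\alpha,\beta,\gamma}\cdot\nu=0$. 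This step is where some care is needed: the domain is unbounded, so I would first justify the integrations by parts by truncating in $z$ with cutoffs $\chi(z/K)$ and letting $K\to\infty$. The error terms vanish because $v\in H^1(\Omega)$ and $q\in L^2(\Omega)$, while $\U_{\alpha,\beta,\gamma}$ and its gradient are bounded.

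\emph{Step 3: bound on the linear term.} In cylindrical coordinates, $(v\cdot\nabla\U_{\alpha,\beta,\gamma})\cdot v=v^TA(\rho)v$. Here $A(\rho)$ is the symmetric matrix \eqref{Arho}, with the off-diagonal $(\rho,z)$ entry replaced by $-\tfrac12\frac{d}{d\rho}$ of the $e_z$-component in \eqref{tcg-vorti}. Its largest eigenvalue is
\[
\Upsilon(\rho)=\sqrt{a_{12}^2+a_{13}^2}.
\]
A direct computation of $\tfrac{\alpha}{2}\big(\tfrac{\U^C}{\rho}-\tfrac{d\U^C}{d\rho}\big)=\alpha\frac{R_1R_2^2}{(R_2^2-R_1^2)\rho^2}$ and of the derivative of the $z$-component gives exactly the expression inside the maximum in \eqref{eq:M-vorticity}. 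This uses \eqref{vorticity-tcg} to write $R_1(\omega_{0,1})_\theta(R_1)=\tfrac14\big(\tfrac{R_2^2-R_1^2}{\log(R_2/R_1)}-2R_1^2\big)$. Hence $\Upsilon(\rho)\le M_{\rm out}(\alpha,\beta,\gamma)$, and combining this with Step 2 and \eqref{eq:poincarecurlinner} yields
\[
\|\nabla\times v\|_{L^2}^2\le M_{\rm out}\|v\|_{L^2}^2\le \frac{M_{\rm out}}{\underline\Lambda(\Omega)}\|\nabla\times v\|_{L^2}^2 .
\]

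\emph{Step 4: conclusion.} Under \eqref{uniqueness-cond-vort-outer2}, the inequality in Step 3 forces $\nabla\times v\equiv0$. Since $R_2/R_1<e$, Lemma \ref{lem:poincarecurlinner} gives $\underline\Lambda(\Omega)>0$, so $\|v\|_{L^2}\le\underline\Lambda^{-1/2}\|\nabla\times v\|_{L^2}=0$ and $v\equiv0$. Then $\nabla q=0$, and $q\in L^2(\Omega)$ forces $q\equiv0$.

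The main obstacle is only the algebraic check that the largest eigenvalue matches \eqref{eq:M-vorticity}, together with the routine truncation argument in Step 2. The genuinely hard part, the positivity of the curl-Poincaré constant, is supplied by Lemma \ref{lem:poincarecurlinner}.
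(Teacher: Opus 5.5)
Your proposal is correct and follows the same route as the paper. The paper proves this theorem by mimicking the energy argument of Theorem \ref{theo:unrestuniquevortouter}, now with the boundary term vanishing on $\Gamma_1$ through the vorticity condition, bounding the linear term by the largest eigenvalue of the associated symmetric matrix, and invoking Lemma \ref{lem:poincarecurlinner} for $\underline{\Lambda}(\Omega)>0$ when $R_2/R_1<e$. Your explicit check that this eigenvalue reproduces \eqref{eq:M-vorticity}, via $R_1(\omega_{0,1})_\theta(R_1)=\tfrac14\big(\tfrac{R_2^2-R_1^2}{\log(R_2/R_1)}-2R_1^2\big)$, and your $z$-cutoff justification of the integrations by parts on the unbounded domain are correct details that the paper leaves implicit.
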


The positivity of the curl-Poincaré constant can also be obtained assuming $z$-periodicity of the velocity vector fields, with no need of the geometric restriction $R_2/R_1\in (1, e)$. Obviously, since $\U_{\alpha,\beta,\gamma}$ in \eqref{tcg-vorti} are independent of $z$, they are also $z$-periodic. We fix a period $L>0$ and set $\Omega_L=\{(x,y,z)\in \mathbb{R}^3:R_1^2<x^2+y^2<R_2^2, z\in [-L/2,L/2)\}$. We define the space
\begin{equation}\label{eq:V_L}\begin{aligned}
		V_L:= \big\{&v\in H^1_{\rm loc}(\Omega), \nabla\cdot v=0 \text{ in } \Omega, v\cdot \nu=0 \text{ on } \Gamma_1, v=0 \text{ on } \Gamma_2, \\[2pt]& \ v(x,y,z)=v(x,y,z+L) \quad  \forall (x,y,z)\in \Omega\big\}
	\end{aligned}
\end{equation}
and we prove:

\begin{lemma}\label{lem:poincarecurlperiodic}
Let $V_L$ be the space defined in \eqref{eq:V_L}. Then,
 \begin{equation}\label{eq:Lambda_L}
		\Lambda_L(\Omega):=\inf_{v\in V_L\setminus\{ 0\} }\frac{\norm{\nabla\times v}_{L^2(\Omega_L)}^2}{\norm{v}_{L^2(\Omega_L)}^2}>0.
	\end{equation}
\end{lemma}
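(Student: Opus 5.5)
The plan is a contradiction argument based on compactness. The periodic cell $\Omega_L$, with $z$ identified modulo $L$, is a bounded smooth manifold with boundary $\Gamma_1^L\cup\Gamma_2^L$. So Rellich's theorem is available, which it was not in the unbounded setting of Lemma \ref{lem:poincarecurlinner}. The only obstruction to compactness is the negative boundary term on $\Gamma_1$. I will not try to make it small by a geometric restriction; instead I will absorb it with a lower-order term.

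\textbf{Step 1 (a G\aa rding-type inequality).} For smooth $v\in V_L$, integrate $|\nabla v|^2-|\nabla\times v|^2-(\nabla\cdot v)^2$ by parts over $\Omega_L$. The two ends $z=\pm L/2$ contribute nothing because of periodicity. As in \eqref{eq:girault2}, and using $v=0$ on $\Gamma_2$, $v\cdot\nu=0$ on $\Gamma_1$ and $\nabla\cdot v=0$, this gives
\begin{equation*}
\norm{\nabla v}^2_{L^2(\Omega_L)}-\int_{(0,2\pi)\times(-L/2,L/2)}|v_\theta(R_1,\theta,z)|^2\,d\theta\, dz=\norm{\nabla\times v}^2_{L^2(\Omega_L)} .
\end{equation*}
The boundary term is estimated with the trace inequality with $\varepsilon$ on the bounded set $\Omega_L$:
\begin{equation*}
\int_{\Gamma_1^L}|v|^2\le \varepsilon\norm{\nabla v}^2_{L^2(\Omega_L)}+C_\varepsilon\norm{v}^2_{L^2(\Omega_L)} .
\end{equation*}
Taking $\varepsilon R_1=1/2$ yields
\begin{equation*}
\tfrac12\norm{\nabla v}^2_{L^2(\Omega_L)}\le \norm{\nabla\times v}^2_{L^2(\Omega_L)}+C\norm{v}^2_{L^2(\Omega_L)},
\end{equation*}
and this extends to all of $V_L$ by density.

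\textbf{Step 2 (compactness).} Suppose $\Lambda_L(\Omega)=0$. Then there are $v_n\in V_L$ with $\norm{v_n}_{L^2(\Omega_L)}=1$ and $\norm{\nabla\times v_n}_{L^2(\Omega_L)}\to0$. By Step 1, $(v_n)$ is bounded in $H^1(\Omega_L)$. Rellich's theorem, together with the weak closedness of the linear constraints (divergence, traces, periodicity), gives a limit $v\in V_L$ with $\norm{v}_{L^2(\Omega_L)}=1$ and $\nabla\times v=0$, $\nabla\cdot v=0$. Here the curl vanishes by weak lower semicontinuity.

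\textbf{Step 3 (the limit must vanish).} This is the main obstacle, since Remark \ref{contro} shows that curl-free, divergence-free fields with only $v\cdot\nu=0$ do exist in this non-simply-connected geometry. Those examples, $e_\theta/\rho$ and $e_z$, survive periodicity, so the condition on $\Gamma_2$ has to be used. From $\Delta v=\nabla(\nabla\cdot v)-\nabla\times\nabla\times v=0$ in the distributional sense, each Cartesian component of $v$ is harmonic, hence real-analytic in $\Omega$. Elliptic regularity for this div–curl system with $v=0$ on $\Gamma_2$ gives smoothness up to $\Gamma_2$. On $\Gamma_2$, all tangential derivatives of $v$ vanish because $v\equiv0$ there. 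Then $\nabla\times v=0$ forces the tangential components of $\partial_\rho v$ to vanish, and $\nabla\cdot v=0$ forces $\partial_\rho v_\rho=0$. Hence every component of $v$ is harmonic with zero Cauchy data on the analytic surface $\Gamma_2$. Holmgren's theorem (or unique continuation for harmonic functions) then gives $v\equiv0$ in a neighborhood of $\Gamma_2$, and by analyticity and connectedness in all of $\Omega$. This contradicts $\norm{v}_{L^2(\Omega_L)}=1$, so $\Lambda_L(\Omega)>0$.

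If making the regularity up to $\Gamma_2$ precise proves delicate, I would try an alternative for Step 3. Namely, use the Hodge decomposition on the periodic annulus $v=\nabla\phi+c_1e_\theta/\rho+c_2e_z$. The condition $v=0$ on $\Gamma_2$ should then force $c_1=c_2=0$ and $\phi$ constant. The Cauchy-data argument above avoids this topological bookkeeping.
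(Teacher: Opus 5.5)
Your proof is correct. Its skeleton is the paper's: an $H^1(\Omega_L)$ bound by $\|w\|_{L^2(\Omega_L)}+\|\nabla\times w\|_{L^2(\Omega_L)}$, then a normalized sequence with $\nabla\times w_n\to0$ and Rellich compactness, and finally the fact that $V_L$ contains no nonzero curl-free field. The difference is in how the two ingredients are obtained.

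\textbf{The $H^1$ bound.} The paper passes through the bounded smooth domains $\Omega^{K}$ and a cut-off $\chi_K$. It then applies Theorem A.1 of \cite{concaenglish} and uses periodicity to return to $\Omega_L$. You work directly on the periodic cell, which has no end caps $\Gamma_3^K$. There you absorb the negative curvature term of \eqref{eq:girault2} with an $\varepsilon$-trace inequality. This reuses an identity the paper already proves. It also makes clear that the concavity of $\Gamma_1$ costs only a lower-order term, which compactness then handles. That is exactly why periodicity can replace the restriction $R_2/R_1<e$ of Lemma \ref{lem:poincarecurlinner}. There is a harmless slip: you need $\varepsilon=R_1/2$, not $\varepsilon R_1=1/2$.

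\textbf{The kernel.} The paper only invokes a \emph{slight modification} of Lemma A.2 in \cite{concaenglish}; you actually prove the statement and explain why it holds. The curl-free, divergence-free fields $e_\theta/\rho$ (Remark \ref{contro}) and $e_z$ are tangent to both cylinders and $L$-periodic. So it must be the Dirichlet condition on $\Gamma_2$ that excludes them, and your zero-Cauchy-data and unique-continuation argument shows this.

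The boundary regularity you were worried about is routine. Each Cartesian component of the limit is harmonic in $\Omega$ with zero trace on $\Gamma_2$, so local regularity for the Dirichlet Laplacian applies componentwise. Your Hodge fallback also works. Integrating $v=0$ on $\Gamma_2$ over a $\theta$-circle and over a $z$-period gives $c_1=c_2=0$. Then $\partial_\nu\phi=0$ on both cylinders forces $\nabla\phi\equiv0$.

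\textbf{What each route buys.} The paper's version is shorter on the page but rests on results cited for bounded domains. Yours is self-contained and isolates the Dirichlet condition on $\Gamma_2$ as the mechanism that makes $\|\nabla\times\cdot\|$ a norm on $V_L$. Neither argument gives an explicit lower bound for $\Lambda_L(\Omega)$, unlike Lemma \ref{lem:poincarecurlinner}.
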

\begin{proof}
We construct a suitable compact subregion of $\Omega$, as in the proof of Lemma \ref{lem:poincarecurlouter}. Let $K>0$ be a real number, and $\Gamma^K$ be the part of the curve within the $(\rho,z)$-plane defined by
	$$(z-K)^4+\frac{16}{(R_2-R_1)^4}\left(\rho-\frac{R_1+R_2}{2}\right)^4=1$$ such that $\text{supp } \Gamma^K\subset \{\rho\in [R_1,R_2],z\in [K,K+1]\}$. Let $-\Gamma^K$ be the symmetric curve to $\Gamma^K$ with respect to the $\rho$ axis. Then, the curve $$\left\{\rho=R_1,z\in[-K,K]\right\}\cup\left\{\rho=R_2,z\in[-K,K]\right\}\cup \Gamma^K\cup -\Gamma^K $$ is the $C^3$-boundary of a 2D-domain $\Omega_{2D}^{K}$. Finally, we define $\Omega^K$ as the 3D-rotation of  $\Omega^K_{2D}$ with respect to the $z$-axis, namely
	\begin{equation}\label{eq:Omega^K2}
		\Omega^K=\left\{(x,y,z):(\sqrt{x^2+y^2},z)\in \Omega^K_{2D}\right\}.
	\end{equation}
	Take any $w\in V_L \setminus \{0\}$. We choose a smooth function $\chi_{K}(z)$ such that $0 \leq \chi_{K} \leq 1$,
	$\chi_{K} \equiv 1$ if $|z| \leq K$, and $\chi_{K} \equiv 0$ if $|z| \geq K+1$.
	Then, it is clear that $\chi_{K} w$ satisfies
	$\chi_{K} w \cdot \nu = 0$ on the boundary of $\Omega^{K+1}$, and
	$\chi_{K} w= 0$ on part of the boundary of $\Omega^{K+1}$.
	Therefore, we can apply \cite[Theorem A.1]{concaenglish} to deduce that there exists
	a constant $C_{\Omega^{K+1}}>0$ such that
	$$\norm{ \chi_{K} w}_{H^1(\Omega^{K+1})}\leq C_{\Omega^{K+1}} (\norm{\chi_{K} w}_{L^2(\Omega^{K+1})}+\norm{\nabla\times (\chi_{K} w)}_{L^2(\Omega^{K+1})}).$$
	Choosing $K=L/2$, we observe that $\Omega_L\subset \Omega^{L/2}\subset \Omega^{L/2+1}\subset \Omega _{L\lceil 1+4/L\rceil }$, where $\lceil s \rceil$ is the least integer greater than or equal to $s$. It follows that
	\begin{equation}\label{eq:concaappendixnorms}
		\begin{aligned}
			\norm{ w}_{H^1(\Omega_L)}\leq & \norm{ \chi_{\frac{L}{2}} w}_{H^1(\Omega^{\frac{L}{2}+1})}\leq C_{\Omega^{\frac{L}{2}+1}} \left(\norm{\chi_{\frac{L}{2}} w}_{L^2(\Omega^{\frac{L}{2}+1})}+\norm{\nabla\times (\chi_{\frac{L}{2}} w)}_{L^2(\Omega^{\frac{L}{2}+1})}\right) \\
			\leq & \ \widetilde{C}_{\Omega^{L}}(\norm{w}_{L^2(\Omega_L)}+\norm{\nabla\times w}_{L^2(\Omega_L)}),
		\end{aligned}
	\end{equation}
	for some constant $\widetilde{C}_{\Omega^L}>0$ depending on $L$.
	
	By contradiction, assume that $\Lambda_L(\Omega)=0$. Then, there exists a sequence $\{w_n\}_{n}\subset  {V}_L$ such that $\norm{w_n}_{L^2(\Omega_L)}=1,$ and $\norm{\nabla\times w_n}_{L^2(\Omega_L)}\leq \frac{1}{n}$. By \eqref{eq:concaappendixnorms}, this sequence is bounded in $H^1(\Omega_L)$ and, up to subsequences, it converges weakly to some $w\in H^1(\Omega_L)$ and, by compactness, strongly in $L^2(\Omega_L)$. Besides, $\nabla\times w=0$ by the lower semi-continuity of the norm with respect to weak continuity. Since a slight modification of \cite[Lemma A.2]{concaenglish} shows that the map $w\mapsto \norm{\nabla\times w}_{L^2(\Omega_L)}$ defines a norm in $V_L$, we conclude that $w\equiv 0$ in $\Omega_L$, which contradicts the original assumption $w\in V_L \setminus \{0\}$ and proves that $\Lambda_L(\Omega)>0$.
\end{proof}

 By slightly modifying the proof of Theorem \ref{theo:unrestuniquevortouter}, we obtain the next stability result:

\begin{theorem}\label{theo:unrestuniquevorinner}
	Fix $\alpha,\beta,\gamma\in\mathbb{R}$. Let  $	\Lambda_L(\Omega)>0$ and ${M}_{\rm out}(\alpha,\beta,\gamma)$ be as in \eqref{eq:Lambda_L} and \eqref{eq:M-vorticity}, respectively. 
    Assume that $(v,q)\in V_L(\Omega)\times L^2_{\rm loc}(\Omega)$ is such that $q$ is $L$-periodic with respect to $z$, and $(\U_{\alpha,\beta,\gamma}+v,\P_{\alpha,\gamma}+q)$ solves \eqref{ns-vorti}. If
	\begin{equation}\label{uniqueness-cond-vort-inner}
		{M}_{\rm out}(\alpha,\beta,\gamma)<\Lambda_L(\Omega),
	\end{equation}
	then $(v,q)\equiv (0,0)$ in $\Omega$.
\end{theorem}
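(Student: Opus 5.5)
The plan is to mimic the proof of Theorem \ref{theo:unrestuniquevortouter}, with every integral taken over one period cell $\Omega_L$ instead of all of $\Omega$, and with Lemma \ref{lem:poincarecurlperiodic} in place of Lemma \ref{lem:poincarecurlouter}. Since $\U_{\alpha,\beta,\gamma}$ and $\P_{\alpha,\gamma}$ solve \eqref{ns-vorti}, the perturbation $(v,q)$ is a classical solution of
\begin{equation*}
-\Delta v + v\cdot\nabla v + \U_{\alpha,\beta,\gamma}\cdot\nabla v + v\cdot\nabla\U_{\alpha,\beta,\gamma} + \nabla q = 0,\qquad \nabla\cdot v = 0 \quad\text{in } \Omega,
\end{equation*}
with boundary conditions $v\cdot\nu=0$ and $(\nabla\times v)\times\nu=0$ on $\Gamma_1$, and $v=0$ on $\Gamma_2$. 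Moreover $v$ and $q$ are $L$-periodic in $z$.

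First I take the scalar product with $v$ and integrate over $\Omega_L$, writing $-\Delta v=\nabla\times(\nabla\times v)$. The boundary of $\Omega_L$ consists of the lateral parts on $\Gamma_1$ and $\Gamma_2$, plus the two horizontal discs $z=\pm L/2$.
\begin{itemize}
\item On $\Gamma_1$, the term $((\nabla\times v)\times\nu)\cdot v$ vanishes by the vorticity condition.
\item On $\Gamma_2$, every boundary term vanishes because $v=0$.
\item On the two discs, the contributions cancel pairwise by $L$-periodicity, since they carry opposite normals and equal traces of $v$, $\nabla v$ and $q$.
\end{itemize}
The convective term $\int_{\Omega_L}(v\cdot\nabla v)\cdot v$ and the term $\int_{\Omega_L}(\U_{\alpha,\beta,\gamma}\cdot\nabla v)\cdot v$ are each $\tfrac12\int$ of a divergence of $|v|^2$ times a divergence-free field. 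They therefore reduce to boundary fluxes, which vanish by $v\cdot\nu=0$ on $\Gamma_1\cup\Gamma_2$ and cancel on the discs by periodicity. Here I use that $\U_{\alpha,\beta,\gamma}$ is tangent to the cylinders and independent of $z$. The pressure term $\int_{\Omega_L}\nabla q\cdot v$ vanishes in the same way. This leaves
\begin{equation*}
\int_{\Omega_L}|\nabla\times v|^2=-\int_{\Omega_L}(v\cdot\nabla\U_{\alpha,\beta,\gamma})\cdot v .
\end{equation*}

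Next, in cylindrical coordinates the right-hand side equals $-\int\rho\, v^TA(\rho)v$, where $A(\rho)$ is the symmetric matrix analogous to \eqref{Arho2}, built from $\U^C$ and the $z$-component of $\U_{\alpha,\beta,\gamma}$ in \eqref{tcg-vorti}. Its largest eigenvalue is $\Upsilon(\rho)=\big[\alpha^2(\U^C/\rho-\tfrac{d}{d\rho}\U^C)^2/4+\tfrac14(\tfrac{d}{d\rho}(\U_{\alpha,\beta,\gamma})_z)^2\big]^{1/2}$. A direct computation from \eqref{tc1} and \eqref{tcg-vorti}, using $R_1(\omega_{0,1})_\theta(R_1)=\tfrac14\big(\tfrac{R_2^2-R_1^2}{\log(R_2/R_1)}-2R_1^2\big)$, gives exactly the bracket in \eqref{eq:M-vorticity}. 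The eigenvalues of $A(\rho)$ are $0$ and $\pm\Upsilon(\rho)$, so $|v^TAv|\le\Upsilon(\rho)|v|^2$, and the sign of the right-hand side plays no role. Hence $\Upsilon(\rho)\le M_{\rm out}(\alpha,\beta,\gamma)$, which gives
\begin{equation*}
\|\nabla\times v\|^2_{L^2(\Omega_L)}\le M_{\rm out}\|v\|^2_{L^2(\Omega_L)}\le \frac{M_{\rm out}}{\Lambda_L(\Omega)}\|\nabla\times v\|^2_{L^2(\Omega_L)} .
\end{equation*}
Under \eqref{uniqueness-cond-vort-inner} this forces $\nabla\times v\equiv0$ on $\Omega_L$. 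Lemma \ref{lem:poincarecurlperiodic} then gives $v\equiv0$ on $\Omega_L$, and by periodicity on all of $\Omega$. Finally the equation reduces to $\nabla q=0$, so $q$ is constant, and that constant is zero after the usual normalization of the pressure.

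The step I expect to be the main obstacle is justifying the integrations by parts on the period cell: the cancellation of the flux terms on the discs $z=\pm L/2$ and the Green formula for the curl with a mixed boundary. Local smoothness up to the lateral boundary follows from the regularity already invoked for \eqref{SNSEcyl}, and periodicity handles the horizontal faces. If the trace argument turns out delicate, I would instead integrate against $\chi_K v$ over many periods and let $K\to\infty$. In that approach the cutoff errors are $O(1)$ while the main terms scale with the number of periods.
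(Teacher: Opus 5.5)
Your proposal is correct and follows essentially the same route as the paper. Both arguments take the energy identity on the period cell $\Omega_L$, in which the boundary, pressure and convective contributions vanish by the vorticity condition on $\Gamma_1$, $v=0$ on $\Gamma_2$ and $z$-periodicity, and then combine the pointwise eigenvalue bound $\Upsilon(\rho)\le M_{\rm out}(\alpha,\beta,\gamma)$ with Lemma \ref{lem:poincarecurlperiodic}; your placement of the boundary conditions (vorticity condition on $\Gamma_1$, Dirichlet on $\Gamma_2$) is the one consistent with \eqref{ns-vorti} and \eqref{eq:V_L}, and your remark that $q$ is only determined up to an additive constant is a fair refinement of the paper's conclusion.
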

\begin{proof}
	Since $(\U_{\alpha,\beta,\gamma},\P_{\alpha,\gamma})$ solves \eqref{ns-vorti}, the perturbation $(v,q)$ satisfies
	
	\begin{equation}\label{perturbNSinnervort}\begin{cases}
			-\Delta v + v \cdot \nabla v + {\U}_{\alpha,\beta,\gamma} \cdot \nabla v  + v \cdot \nabla  {\U}_{\alpha,\beta,\gamma} + \nabla q =0, \quad \nabla\cdot v =0 \quad \text{in} \quad \Omega, \\[5pt]
			v\cdot \nu =0,\quad
			(\nabla\times v)\times \nu=0 \quad
			\text{on}\quad \Gamma_2, \qquad \, v = 0\quad \text{on}\quad \Gamma_1\, ,
		\end{cases}
	\end{equation}and, as mentioned above, it is a classical solution to \eqref{perturbNSinnervort}.
	
	We denote $\Gamma_2\cap \overline{\Omega_L}$ by $\Gamma_{2,L}$.
	Taking the $L^2(\Omega_L)$-scalar product of \eqref{perturbNSinnervort} with $v$, integrating by parts and using the divergence-free and periodicity conditions yields
	
	\begin{equation}\label{dirnorm-vort-periodic}
		\begin{aligned}
			\int_{\Omega_L} |\nabla\times v|^2  =& \int_{\Gamma_{2,L}} ((\nabla\times v)\times \nu)\cdot v- \int_{\Omega_L}\left(v\cdot \nabla {\U}_{\alpha,\beta,\gamma}\right) \cdot v \\
			&+\int_{R_1^2<x^2+y^2<R_2^2} v_z(x,y,z) q(x,y,z)-v_z(x,y,z+L)q(x,y,z+L)\\[5pt]
			=&- \int_{\Omega_L}\left(v\cdot \nabla {\U}_{\alpha,\beta,\gamma}\right) \cdot v.
		\end{aligned}
	\end{equation}
	
	We then deduce from \eqref{dirnorm-vort-periodic} that

	\begin{equation*}
		\int_{\Omega_L} |\nabla\times v|^2 \leq {M}_{\rm out}(\alpha,\beta,\gamma)\int_{\Omega_L} |v|^2\leq \frac{{M}_{\rm out}(\alpha,\beta,\gamma)}{\Lambda_L(\Omega)}\int_{\Omega_L} |\nabla\times v|^2,
	\end{equation*}
	where
	\begin{equation*}
	\begin{aligned}
		{M}_{\rm out}(\alpha,\beta,\gamma)=
        \max_{\rho\in [R_1,R_2]}\left[\alpha^2 \frac{R_1^2R_2^4}{(R_2^2 -R_1^2)^2\rho^4} +\left(\frac{1}{\rho}
		\bigg(\frac{\beta-\gamma}{4}R_1^2
		-\frac{\beta}{8}\frac{R_2^2-R_1^2}{\log(R_2/R_1)}\bigg)
		+\rho\frac{\gamma}{4} \right)^2\right]^{\frac{1}{2}}.
	\end{aligned}
	\end{equation*}
	Hence, if \eqref{uniqueness-cond-vort-inner} holds, $\nabla\times v\equiv0$ in $\Omega_L$.  Therefore, we also deduce that $v\equiv0$ in $\Omega_L$ and, in turn,  that  $q\equiv0$ in $\Omega_L$.\end{proof}

\begin{remark}
	The pressure $\mathcal{P}_{\alpha,\gamma}$ is not periodic. However, Theorem \ref{theo:unrestuniquevorinner} establishes that no perturbations in $L^2_{\mathrm{loc}}(\Omega)$ of $\mathcal{P}_{\alpha,\gamma}$ that are $L$-periodic in $z$ are admissible. This periodicity constraint on the perturbation is necessary: without it, the perturbation $v = \mathcal{U}_{0,0,\gamma'}$, $q = \mathcal{P}_{0,\gamma'}$ would yield distinct solutions for arbitrary values of $\gamma'\in \mathbb{R}$.	
	This constraint has physical meaning: the parameter $\gamma$ relates to mass density. Thus, enforcing periodicity ensures that the perturbation preserves the mass density of the fluid.
\end{remark}

As a final remark, we emphasize that Theorem \ref{theo:unrestuniquevorinner} applies to any periodic perturbation. The velocity of the spiral Poiseuille-Couette flows is both helical and axially symmetric. Consequently, the constraint  \eqref{uniqueness-cond-vort-inner} implies that no other helical or axially symmetric solutions with vertical period $L$ exist. The question of whether \eqref{uniqueness-cond-vort-inner} can be improved within the helical or axially symmetric class remains open. On the one hand, since they are proper subspaces, one could expect that the upper bound in \eqref{uniqueness-cond-vort-inner} can be enlarged. On the other hand, the first bifurcation experimentally observed when increasing the magnitude of the boundary data enjoys axial symmetry, see \cite{Chossat}: this suggests that the stability thresholds might not be affected considering the smaller axially symmetric class.
\appendix
\section{Analysis of bounds for stability}

In this section we discuss the stability condition \eqref{small-cond-theo} as $R_1$ and $R_2$ vary.
Basically, we bound the Poincaré constant in a planar annulus. Similar bounds can also be obtained
for the other stability conditions \eqref{M-alphabeta-new2}, \eqref{uniqueness-cond-vort-outer}, \eqref{uniqueness-cond-vort-outer2} and \eqref{uniqueness-cond-vort-inner}.

\begin{proposition}\label{R1R2vary}
	Let $\alpha,\beta \in \mathbb{R}$ and assume that $(v, q)\in V_0(\Omega)\times L^2(\Omega)$ is such that $( \U_{\alpha,\beta}+v, \P_{\alpha,\beta}+q)$
	solves \eqref{SNSEcyl}-\eqref{nsstokes0}$_1$. Then, in order for the condition \eqref{small-cond-theo} to hold (which ensures that $(v,q)\equiv(0,0)$ in $\Omega$)
	the following behaviors are necessary:\smallskip 
    
    \begin{itemize}
	\item[$(i)$] if $R_2>0$ is given and $R_1\to0$, then the upper bounds for $|\alpha|$ and $|\beta|$ tend to $0$;
    
	\item [$(ii)$] if $R_1>0$ is given and $R_2\to\infty$, then the upper bounds for $|\alpha|$ and $|\beta|$ tend to $0$;
    
	\item [$(iii)$] if $R_1>0$ is given and $R_2\to R_1$, then the upper bounds for $|\alpha|$ and $|\beta|$ tend to infinity.
    \end{itemize}
\end{proposition}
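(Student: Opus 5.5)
Condition \eqref{small-cond-theo} reads $M_{\rm out}(\alpha,\beta)<\Lambda(\Omega)$. By \eqref{M-alphabeta-new} it is equivalent to the pair of necessary bounds
\[
|\alpha|<\Lambda(\Omega)\,\frac{(R_2^2-R_1^2)R_1}{R_2^2},\qquad |\beta|<\frac{4\Lambda(\Omega)}{|g(R_1,R_2)|},\qquad g(R_1,R_2):=\frac{R_2^2-R_1^2}{R_1\log(R_2^2/R_1^2)}-R_1 .
\]
These follow by keeping only one of the two squares in $M_{\rm out}$, and conversely both bounds scaled by $1/\sqrt2$ suffice. So the plan is to understand how $\Lambda(\Omega)$, the factor $(R_2^2-R_1^2)R_1/R_2^2$ and $|g|$ behave in the three regimes. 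For $\Lambda(\Omega)$ I need two-sided information.

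\textbf{Lower bound.} I take it from \eqref{boundPoincare}, i.e. $\Lambda(\Omega)\ge\max\{\pi^2/(2R_2^2),\,8/((R_2^2-R_1^2)\log(R_2/R_1))\}$.

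\textbf{Upper bound.} I plug in test fields in $V_0(\Omega)$. Concretely, I take $v=\nabla\times(\psi\,e_z)$ with $\psi(x,y)\chi(z/L)$, or a purely azimuthal field $v=\phi(\rho)\chi(z/L)e_\theta$ corrected to be divergence-free. The simplest choice is $v=\phi(\rho)\chi(z/L)e_z$ with $\chi$ replaced by the curl of a stream function in $(\rho,z)$. Letting $L\to\infty$ kills the $z$-derivative contributions, and I get $\Lambda(\Omega)\le\lambda_1(A)$, the first Dirichlet eigenvalue of the planar annulus $A=\{R_1<|x|<R_2\}$ (up to a harmless constant). Then
\[
\lambda_1(A)\le \lambda_1\bigl(B_{(R_2-R_1)/2}\bigr)=\frac{4j_0^2}{(R_2-R_1)^2},
\]
by inscribing a disc of diameter $R_2-R_1$ in $A$. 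Alternatively, for $R_1\to0$ I use $\lambda_1(A)\to\lambda_1(B_{R_2})=j_0^2/R_2^2$, since a point has zero capacity in 2D.

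Now the three regimes.

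\emph{Case (i), $R_2$ fixed, $R_1\to0$.} $\Lambda$ stays bounded by $C/R_2^2$. The $\alpha$-factor $(R_2^2-R_1^2)R_1/R_2^2\to0$. For $\beta$, $g\sim R_2^2/(2R_1\log(R_2/R_1))\to\infty$. Hence both bounds tend to $0$.

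\emph{Case (ii), $R_1$ fixed, $R_2\to\infty$.} Here $\Lambda\le 4j_0^2/(R_2-R_1)^2\to0$. The $\alpha$-factor tends to $R_1$, so the $\alpha$-bound tends to $0$. Also $|g|\to\infty$, so the $\beta$-bound tends to $0$.

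\emph{Case (iii), $R_2\to R_1$.} The lower bound $8/((R_2^2-R_1^2)\log(R_2/R_1))\sim 4R_1/(R_2-R_1)^2\cdot R_1^{-1}\cdot\ldots$ blows up like $c\,(R_2-R_1)^{-2}$. The $\alpha$-factor behaves like $2(R_2-R_1)/R_1$, so the product blows up like $(R_2-R_1)^{-1}$. For $g$, I expand with $R_2=R_1(1+\varepsilon)$: $\log(R_2^2/R_1^2)=2\varepsilon-\varepsilon^2+\tfrac23\varepsilon^3+\dots$ and $R_2^2-R_1^2=R_1^2(2\varepsilon+\varepsilon^2)$. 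This gives $g=R_1(\varepsilon+O(\varepsilon^2))$, so $|g|\to0$ and $4\Lambda/|g|\to\infty$. In this regime the bounds are sufficient ones as well, so the admissible region genuinely grows, which is the intended meaning of (iii).

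\textbf{Main obstacle.} The delicate step is the upper bound on $\Lambda(\Omega)$ over divergence-free fields. I would construct it concretely as $v=\nabla\times\bigl(\eta(\rho)\chi(z/L)e_\theta\bigr)$-type fields, or simply $v=\phi(x,y)\chi(z/L)e_z$ corrected by a small radial part. I then need to check that the correction's energy is $O(1/L)$, so that the Rayleigh quotient converges to that of $\phi$ as $L\to\infty$. The remaining work is the routine asymptotic expansions of $g$.
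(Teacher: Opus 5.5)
Your skeleton is the paper's. You keep one square of $M_{\rm out}$ to get necessary bounds on $|\alpha|,|\beta|$, bound $\Lambda(\Omega)$ from above for (i)--(ii), use \eqref{boundPoincare} with the halved sufficient bounds for (iii), and compute the asymptotics of $(R_2^2-R_1^2)R_1/R_2^2$ and of $g$ (the paper's $\Phi_1^{-1/2}$, $\Phi_2^{1/2}$). Those asymptotics are correct; in (iii) the $\alpha$-factor is $\sim 2(R_2-R_1)$, not $2(R_2-R_1)/R_1$, but this is harmless.

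\textbf{The gap is the upper bound on $\Lambda(\Omega)$.} First, the link $\Lambda(\Omega)\le\lambda_1(A)$ goes the wrong way. Slicing gives $\int_\Omega|\nabla u|^2\ge\lambda_1(A)\int_\Omega|u|^2$ for all $u\in H^1_0(\Omega)$, and $V_0(\Omega)\subset H^1_0(\Omega)$, so $\Lambda(\Omega)\ge\lambda_1(A)$. Hence the inscribed-disc bound on $\lambda_1(A)$ and the capacity remark for $R_1\to0$ say nothing about $\Lambda(\Omega)$. Second, the route ``$\phi(x,y)\chi(z/L)e_z$ plus a correction of energy $O(1/L)$'' cannot work. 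For $v\in V_0(\Omega)$ the flux $\int_A v_3(\cdot,z)$ has $z$-derivative $-\int_{\partial A}(v_1,v_2)\cdot n\,ds=0$, so it vanishes identically. This forces the correction to satisfy $\int_A w_3=-\chi(z/L)\int_A\phi$. When $\int_A\phi\neq0$ (e.g.\ $\phi$ the first eigenfunction), $w$ is then as large as $v$ itself.

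Your insistence on divergence-free test fields is nevertheless right, and it exposes a flaw in the paper. Its $V_\eps=(v_\eps,0,0)$ has divergence $\partial_x v_\eps\neq0$, so as written it only bounds the Poincar\'e constant of $H^1_0(\Omega)$, which is $\le\Lambda(\Omega)$. Your azimuthal candidate repairs both arguments and needs no correction at all. Take $V_\eps=v_\eps e_\theta$ with the paper's $v_\eps=(1-\eps|z|)^+\phi(\rho)$, $\phi=(R_2-\rho)(\rho-R_1)$. It is exactly divergence-free, lies in $V_0(\Omega)$, and satisfies $|\nabla V_\eps|^2=|\nabla v_\eps|^2+v_\eps^2/\rho^2$. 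Since $\phi^2/\rho\le(R_2-\rho)^2(\rho-R_1)$, one has $\int_{R_1}^{R_2}\phi^2\rho^{-1}\,d\rho\le(R_2-R_1)^4/12$, while $\int_{R_1}^{R_2}\phi^2\rho\,d\rho=(R_2+R_1)(R_2-R_1)^5/60$. Letting $\eps\to0$,
\[
\Lambda(\Omega)\le\frac{10}{(R_2-R_1)^2}+\frac{5}{(R_2+R_1)(R_2-R_1)}\le\frac{15}{(R_2-R_1)^2},
\]
which is exactly what (i) and (ii) require. Your other candidate, $\chi(z/L)(\partial_y\psi,-\partial_x\psi,0)$ with $\psi\in C^\infty_c$ of the inscribed disc, also works by scaling. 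You must bound its quotient $\int|D^2\psi|^2/\int|\nabla\psi|^2$ directly rather than passing through $\lambda_1(A)$.
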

\begin{proof} We start by proving that $\Lambda(\Omega)$ in \eqref{poincare-const} satisfies
	\begin{equation}\label{upperlambda}
		\Lambda(\Omega)\le\frac{10}{(R_2-R_1)^2}\, .
	\end{equation}
	For any $\eps>0$, we show the existence of a vector field $V_\eps\in H^1_0(\Omega)$ such that
	\neweq{Vepsilon}
	\frac{\|\nabla V_\eps\|_{L^2(\Omega)}^2}{\|V_\eps\|_{L^2(\Omega)}^2}=3\eps^3+\frac{10}{(R_2-R_1)^2}\, .
	\endeq
	We determine $V_\eps=(v_\eps,0,0)$ with two null components for some scalar $v_\eps\in H^1_0(\Omega)$, thus
	$$\|\nabla V_\eps\|_{L^2(\Omega)}=\|\nabla v_\eps\|_{L^2(\Omega)},\qquad\|V_\eps\|_{L^2(\Omega)}=\|v_\eps\|_{L^2(\Omega)}.$$
	Setting $v_\eps(x,y,z)=(1-\eps|z|)^+(R_2-\sqrt{x^2+y^2})(\sqrt{x^2+y^2}-R_1)$, with $f^+$ denoting the positive part of $f$, then we have
	$$
	\|v_\eps\|_{L^2(\Omega)}^2=4\pi\int_{0}^{1/\eps}(1-\eps z)^2dz
	\int_{R_1}^{R_2}\rho(R_2-\rho)^2(\rho-R_1)^2d\rho=\frac{\pi}{45\eps}(R_2+R_1)(R_2-R_1)^5\, .
	$$
	Moreover, after computing
	$$
	\nabla v_\eps (x,y,z)=\left(\begin{array}{ccc}
		(1-\eps z)\Big[\tfrac{(R_2+R_1)x}{\sqrt{x^2+y^2}}-2x\Big]\\
		(1-\eps z)\Big[\tfrac{(R_2+R_1)y}{\sqrt{x^2+y^2}}-2y\Big]\\
		\eps(\sqrt{x^2+y^2}-R_2)(\sqrt{x^2+y^2}-R_1)
	\end{array}\right)\qquad\forall z\in(0,1/\eps),
	$$which implies that
	\begin{align*}
		\|\nabla v_\eps\|^2_{L^2(\Omega)} &=
		4\pi\int_{0}^{1/\eps}(1-\eps z)^2dz\int_{R_1}^{R_2}\rho(R_2+R_1-2\rho)^2d\rho+4\pi\eps^2\int_{R_1}^{R_2}\rho(R_2-\rho)^2(\rho-R_1)^2d\rho\\
		&= \frac{2\pi}{9\eps}(R_2+R_1)(R_2-R_1)^3+\frac{\pi\eps^2}{15}(R_2+R_1)(R_2-R_1)^5\, ,
	\end{align*}
	we obtain \eq{Vepsilon}.
	From \eqref{boundPoincare} and \eqref{upperlambda} we have the following bounds:
	$$
	\max\left\{\frac{\pi^2}{2R_2^2},\frac{8}{(R_2^2 - R_1^2)\log (R_2/R_1)} \right\}\le\Lambda(\Omega)\le\frac{10}{(R_2-R_1)^2}\, ,
	$$
	from which we infer that
	\begin{eqnarray}
		\frac{\pi^2}{2R_2^2} &\le& \liminf_{R_1\to0}\Lambda(\Omega)\le\limsup_{R_1\to0}\Lambda(\Omega)\le\frac{10}{R_2^2}\qquad\forall R_2 >0\, , \label{asymp1}\\[5pt]
		4 &\le&\liminf_{R_2\to R_1}\big[(R_2-R_1)^2\Lambda(\Omega)\big]\le\limsup_{R_2\to R_1}\big[(R_2-R_1)^2\Lambda(\Omega)\big]\le10\quad\forall R_1>0 \,, \label{asymp3}\\[5pt]
		&& \lim_{R_2\to\infty}\Lambda(\Omega)=0\quad\forall R_1>0\, , \label{asymp2}
	\end{eqnarray}
	where in \eqref{asymp3} we have used that $\log(R_2/R_1)\leq R_2/R_1-1.$
	Next, let us rewrite $M$ in \eqref{M-alphabeta-new} as
	$$
	M(\alpha, \beta)= \left[\alpha^2\Phi_1(R_1,R_2)+\frac{\beta^2}{16}\Phi_2(R_1,R_2)\right]^{\tfrac{1}{2}},
	$$
	where
	$$
	\Phi_1(R_1,R_2):=\frac{R_2^4}{(R_2^2 -R_1^2)^2R_1^2}\, ,\quad\Phi_2(R_1,R_2):=\left(\frac{R_2^2-R_1^2}{R_1\log(R_2^2/R_1^2)}-R_1\right)^2\, ,
	\quad\mbox{for }0<R_1<R_2\, .
	$$
	Some calculus computations show that
	\begin{equation}\label{R1to0}
\lim_{R_1\to0}\Phi_1(R_1,R_2)=\lim_{R_1\to0}\Phi_2(R_1,R_2)=\infty\quad\forall R_2>0\, ,
	\end{equation}
	\begin{equation}\label{R2toinfty}
		\lim_{R_2\to\infty}\Phi_1(R_1,R_2)=\frac1{R_1^2}\, ,\quad\lim_{R_2\to\infty}\Phi_2(R_1,R_2)=\infty\quad\forall R_1>0\, ,
	\end{equation}
	\begin{equation}\label{R2toR1}
		\lim_{R_2\to R_1}\big[(R_2-R_1)^2\Phi_1(R_1,R_2)\big]=\frac14 \, ,\quad\lim_{R_2\to R_1}\Phi_2(R_1,R_2)=0\, \quad\forall R_1>0\, .
	\end{equation}
	
	To conclude the proof, we use \eqref{small-cond-theo} and the above bounds.
	Item $(i)$ is obtained by combining the upper bound in \eqref{asymp1} with \eqref{R1to0}.
	Item $(ii)$ is obtained by combining \eqref{asymp2} with \eqref{R2toinfty}.
	Item $(iii)$ is obtained by combining the lower bound in \eqref{asymp3} with \eqref{R2toR1}.
\end{proof}

\bigskip

\subsection*{Acknowledgements.} The research of E.B.\ and 
F.G.\ is supported by the grant \textit{Dipartimento di 
Eccellenza 2023-2027},
issued by the Ministry of University and Research (Italy); both are also partially supported by the Gruppo Nazionale
per l’Analisi Matematica, la Probabilità e le loro Applicazioni (GNAMPA) of the
Istituto Nazionale di Alta Matematica (INdAM). A.H.T. would like to thank the Department of Mathematics of Politecnico di Milano for the warm hospitality during the visit in which the discussions leading to this preprint were initiated.
\par\smallskip
\noindent
{\bf Data availability statement.} Data sharing not applicable to this article as no datasets were generated or analyzed during the current study.
\par\smallskip
\noindent
{\bf Conflict of interest statement}.  The authors declare that they have no conflict of interest.

\bigskip

\addcontentsline{toc}{section}{References}
\bibliographystyle{abbrv}
\bibliography{references}
\end{document}